\newtheorem{lm}{Lemma}[section]
\newtheorem{thm}{Theorem}[section]
\newtheorem{cor}{Corollary}[section]
\newtheorem{exam}{Example}[section]
\newcounter{saveeqn}%
\lstdefinestyle{Maple}{
    basicstyle=\small\ttfamily,
    numberstyle=\ttfamily,
    stringstyle=\ttfamily,
    numberstyle=\tiny,
    breaklines=true,
    columns=fixed,
    captionpos=b,
    basewidth=0.47em,
    showstringspaces=false,
    numbers=left,
    frame=utb,
    framextopmargin=2pt,
    framexbottommargin=2pt,
}
\title{\Large\bf{Normal forms of piecewise-smooth systems with a monodromic singular point
}
\thanks{
Supported by National Key R\&D Program of China (No. 2022YFA1005900), NSFC (No. 12271378, No. 12171336),
Sichuan Science and Technology Program (No. 2024NSFJQ0008).
}
}
\author{Jiahao Li,~~Xingwu Chen\!\!
\footnote{Corresponding author. Xingwu Chen (scuxchen@scu.edu.cn, xingwu.chen@hotmail.com).}
,~~Weinian Zhang
\\
{\small School of Mathematics, Sichuan University,}
{\small Chengdu, Sichuan 610064, P. R. China}
}
\date{}
\begin{document}
\maketitle

\begin{abstract}
Normal form theory is developed deeply for planar smooth systems but has few results for piecewise-smooth systems because difficulties arise from continuity of the near-identity transformation, which is constructed piecewise.
In this paper, we overcome the difficulties to study normal forms for piecewise-smooth systems with FF, FP, or PP equilibrium and obtain explicit any-order normal forms by finding piecewise-analytic homeomorphisms and deriving a new normal form for analytic systems. Our theorems of normal forms not only generalize previous results from second-order to any-order, from FF type to all FF, FP, PP types, but also provide a new method to compute Lyapunov constants, which are applied to solve the center problem and any-order Hopf bifurcations of piecewise-smooth systems.

\vskip 0.2cm
{\bf Keywords:} normal form, piecewise-smooth differential system, monodromic singular point, Lyapunov constant.
\end{abstract}

\baselineskip 15pt
\parskip 10pt
\thispagestyle{empty}
\setcounter{page}{1}

\section{Introduction and main results}

\setcounter{equation}{0}
\setcounter{lm}{0}
\setcounter{thm}{0}
\setcounter{rmk}{0}
\setcounter{df}{0}
\setcounter{cor}{0}

Normal form theory is a powerful tool to simplify differential equations and dynamical systems,
which has been widely used in Hilbert's sixteenth problem, bifurcation theory and so on.
Mature theory of normal forms for classical smooth differential systems can be found in textbooks such as \cite{AR,CHO,DUM,GUC,KUZ,Li}.
In recent decades, people develop the normal form theory to systems with lower smoothness (even discontinuity), infinite dimension, random or  full-null degeneracy.
For instance, the idea of normal form theory is used to analyze non-smooth systems in \cite{BUZC,BUZ,WE}, functional differential systems in
\cite{FA,FAR,GUO}, systems with fully null linear part in \cite{GUOZ, RUA} and random dynamical systems in \cite{LIW,LU}.
In this paper we focus on the normal form theory of piecewise-smooth systems, which have received much attention because of
 plenty practical applications in electronic systems \cite{BEBU,SCH}, mechanical systems with dry friction \cite{CH,CHE}, neural activities \cite{COO,TA} and so on.

Consider a piecewise-smooth system
\begin{align}\label{ge}
		\left( \begin{array}{c}
		\dot{x}\\
		\dot{y}\\
	\end{array} \right) =\left\{
    \begin{aligned}
&		\left(X^+(x,y),Y^+(x,y)\right)^\top ~~~~&&\mathrm{if}~y>0,\\
&		\left(X^-(x,y),Y^-(x,y)\right)^\top   && \mathrm{if}~y<0,
	\end{aligned} \right.
\end{align}
where $X^{\pm}(x, y), Y^{\pm}(x, y)$ are real analytic functions.
This system is split by the line $y=0$, called a {\it switching line},
into two subsystems.
As in textbooks \cite{BDM,FAF,JMR,KM}, the subsystem defined on the half plane $y>0$ (resp, $y<0$) is called the {\it upper} (resp. {\it lower}) subsystem and orbits of system~\eqref{ge} are defined piecewise for $y > 0, = 0, < 0$.
Suppose that system~\eqref{ge} has a monodromic singular point $O:(0,0)$, i.e., no orbits approaching $O$ in a definite direction as $t\to \pm\infty$ (see \cite{AN,ROM,ZZ}).
Different from smooth differential systems,
 in piecewise-smooth system~\eqref{ge} there are three types of monodromic singular points:
{\it focus-focus} (FF), {\it focus-parabolic} (FP) and {\it parabolic-parabolic} (PP),
as given in \cite{CO}.
For such three types, usually $O$ is also regarded as a focus of \eqref{ge} and the readers can see
\cite[FIG.1]{CO} or Figure~\ref{MSP} directly for convenience.
Clearly, $O$ is either a focus (see \cite{FAF,HAN}) or an invisible tangent point
(visibility of tangent point can be found in \cite{JEF,KUZ1}) for each subsystem.
Note that PF type can be reduced to FP type by the transformation $(x,y,t)\to (-x,-y,t)$.

\begin{figure}[htp]
  \centering
  \includegraphics[width=15cm]{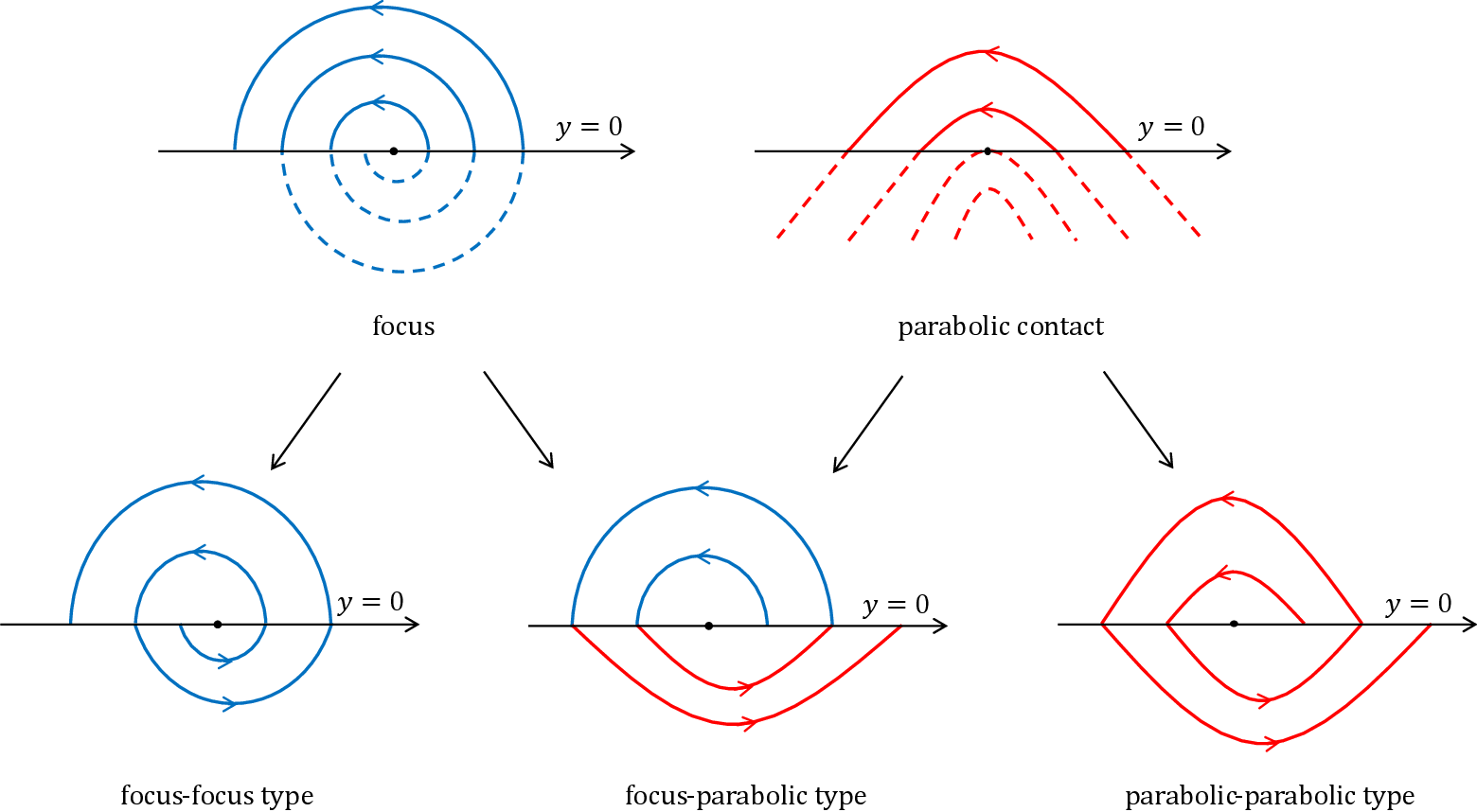}
  \caption{\footnotesize Three types of monodromic singular points of system~\eqref{ge}. }\label{MSP}
\end{figure}

Without loss of generality, suppose that orbits of system~\eqref{ge} rotate around monodromic singular point $O$
of FF, FP or PP type counterclockwise.
Let $\Pi^+(x)$ (resp. $\Pi^-(x)$) be the $x-$coordinate of the first intersection point between
the orbit passing through $(x,0)$ and the switching line for $x\in (0,\varepsilon^+)$ (resp. $(-\varepsilon^-,0)$),
where $0<\varepsilon^\pm\ll 1$.
Usually, the definition of displacement map $\Delta(x)$ for piecewise-smooth system~\eqref{ge} follows the smooth case via the
composition $\Pi^-\circ\Pi^+(x)$ of $\Pi^{\pm}$ and is defined as $\Pi^-\circ\Pi^+(x)-x$, see \cite{CO} and Figure~\ref{DF}(a).
To avoid the composition, another equivalent way is to define the displacement map $\Delta(x)$ as $(\Pi^-)^{-1}(x)-\Pi^+(x)$ as in
\cite{CX2,CO,HAN} and seen in Figure~\ref{DF}(b), and call $\Pi^+(x)$ and $(\Pi^-)^{-1}(x)$
the {\it upper half return map} and {\it lower half return map} respectively.

\begin{figure}[htp]
\centering{}
\subfigure[ $\Pi^-(\Pi^+(x))-x$]{
	\scalebox{0.65}[0.65]{
	\includegraphics{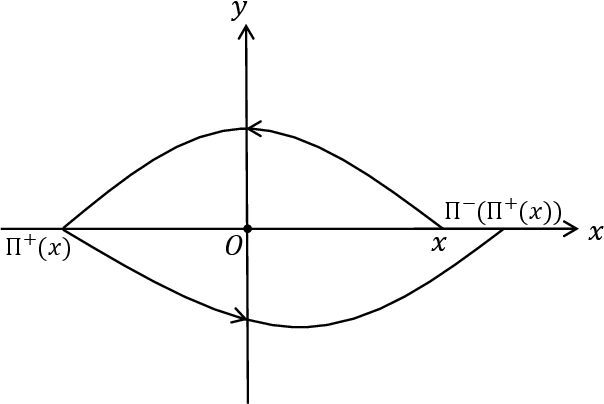}
	}
}
\subfigure[$(\Pi^-)^{-1}(x)-\Pi^+(x)$]{
	\scalebox{0.65}[0.65]{
	\includegraphics{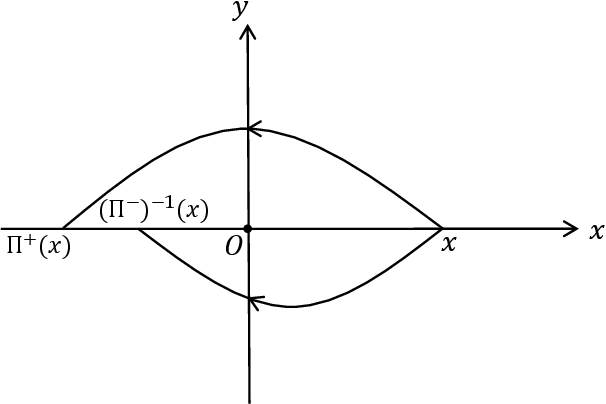}
	}
}
\caption{\footnotesize Displacement map. }
\label{DF}
\end{figure}

By the analyticity of the upper subsystem and the lower one
in \eqref{ge}, we get
\begin{equation}\label{DeltaF}
  \Delta(x)=(\Pi^-)^{-1}(x)-\Pi^+(x)=\sum_{k\ge 1}V_kx^k
\end{equation}
for $x\in (0,\varepsilon^+)$ and coefficient $V_k$ is called the {\it $k$-th Lyapunov constant} usually.
The first nonzero Lyapunov constant decides the local stability of $O$.
Note that the number $k$ of the first nonzero $V_k$ for smooth systems is always odd,
but it can be even for piecewise-smooth system~\eqref{ge}.
In order to uniform the notion of order for both the piecewise-smooth case and smooth one, we introduce the
fractional order as in \cite{CX1}. Precisely, we call the monodromic singular point $O$ of \eqref{ge}
is a focus of order $\varsigma\in \bigcup_{i=0}^{+\infty}\{i/2\}$ if $V_1=\cdots=V_{2\varsigma}=0$ and $V_{2\varsigma+1}\ne0$.
Two particular cases are order $0$ and order $+\infty$, which correspond to rough focus and center respectively.
Clearly, the order of focus in smooth systems is always integer (see \cite{ROM}).

Lyapunov constant $V_k$ is important to distinguish focus of finite order from center (so-called center-focus problem)
and determine the number of limit cycles bifurcating from
focus (Hopf bifurcation problem) for piecewise-smooth system~\eqref{ge}.
Lyapunov constant $V_k$
can be computed by using polar coordinate transformation for focus side and generalized polar coordinate transformation for parabolic side.
Because of tremendous trigonometric polynomials in integration,
there is not a general formula for computation of Lyapunov constants of FF and FP types yet,
but recently
a general formula for computation of Lyapunov constants of PP type was given in \cite{DD}
for system~\eqref{ge} with $(2k^+,2k^-)$-monodromic tangential singularity by considering auxiliary sections which are transversal to both the orbits and the switching line.
However, as said in \cite[p.572]{DD},
it involves some very cumbersome computations when applying the formula that they obtain if $k^{\pm}$ are arbitrary.

Another method is developing the normal form theory for piecewise-smooth systems for computation of Lyapunov constants.
The significant difficulty is to guarantee the continuity of the near-identity transformation (abb. NIT)
defined piecewise. This reminds even for linear normalization that we may not be able to reduce
both the upper subsystem and the lower one to Jordan forms.
For FF type, the first order normal form (i.e., the linear one) and the second one (i.e., the simplest nonlinear one)
are obtained in \cite{CX} by piecewise-linear transformations
and piecewise near-identity transformations respectively, and hence the first two Lyapunov constants are given.
For PP type, a normal form is given in \cite{EM} but, Lyapunov constants are unknown except in \cite{EM1}
for $(2,2)$-monodromic tangential singularity, a special case of PP type.
So far, no results on normal forms of FP type are found.

In this paper, we obtain general normal forms for FF, FP and PP types and Lyapunov constants
for piecewise-smooth system~\eqref{ge}. Our main results are the following theorems.
\begin{thm}
For any positive integer $N$, system~\eqref{ge} has FF normal form
  \begin{align}\label{NFF}
		\left( \begin{array}{c}
		\dot{x}\\
		\dot{y}\\
	\end{array} \right) =\left\{
    \begin{aligned}
&		\left(\begin{array}{cc}
       \gamma_1^+ & -1 \\
       1 & \gamma_1^+ \\
     \end{array}\right)
     \left(\begin{array}{c}
         x \\
         y \\
       \end{array}\right)+
       \sum_{i=1}^{N}\gamma_{i+1}^+y^i\left(\begin{array}{c}
         x \\
         y \\
       \end{array}\right)+{\boldsymbol G}^+_{N+2}(x,y)~~~~&&\mathrm{if}~y>0,\\
&		\left(\begin{array}{cc}
       \gamma_1^- & -1 \\
       1 & \gamma_1^- \\
     \end{array}\right)
     \left(\begin{array}{c}
         x \\
         y \\
       \end{array}\right)+
       \sum_{i=1}^{N}\gamma_{i+1}^-y^i\left(\begin{array}{c}
         x \\
         y \\
       \end{array}\right)+{\boldsymbol G}^-_{N+2}(x,y)   && \mathrm{if}~y<0
	\end{aligned} \right.
\end{align}
if the origin $O$ is a monodromic singular point of FF type,
FP normal form
      \begin{align}\label{NFP}
		\left( \begin{array}{c}
		\dot{x}\\
		\dot{y}\\
	\end{array} \right) =\left\{
    \begin{aligned}
&		\left(\begin{array}{cc}
       \gamma_1^+ & -1 \\
       1 & \gamma_1^+ \\
     \end{array}\right)
     \left(\begin{array}{c}
         x \\
         y \\
       \end{array}\right)+
       \sum_{i=1}^{N}\gamma_{i+1}^+y^i\left(\begin{array}{c}
         x \\
         y \\
       \end{array}\right)+{\boldsymbol G}^+_{N+2}(x,y)~~~~&&\mathrm{if}~y>0,\\
&	   \left(\begin{array}{c}
         1 \\
         x^{2\ell^--1}\\
       \end{array}\right)+
       {\boldsymbol R}^-_{N}(x,y)  && \mathrm{if}~y<0
	\end{aligned} \right.
\end{align}
if the origin $O$ is a monodromic singular point of FP type, and
PP normal form
      \begin{align}\label{NPP}
		\left( \begin{array}{c}
		\dot{x}\\
		\dot{y}\\
	\end{array} \right) =\left\{
    \begin{aligned}
&		\left(\begin{array}{c}
         -1 \\
         x^{2\ell^+-1}+\sum\limits_{i=1}^{N}\sigma_{i+1}^+x^{2\ell^+-1+i}\\
       \end{array}\right)+
      {\boldsymbol R}^+_{N}(x,y)
        && \mathrm{if}~y>0,\\
&		      \left(\begin{array}{c}
         1 \\
         x^{2\ell^--1}\\
       \end{array}\right)+
       {\boldsymbol R}^-_{N}(x,y) && \mathrm{if}~y<0
	\end{aligned} \right.
\end{align}
if the origin $O$ is a monodromic singular point of PP type,
where $\ell^\pm\ge 1$, ${\boldsymbol G}^{\pm}_{N+2}(x,y)$ are sums of homogeneous  polynomials of degree greater or equal to $N+2$
and ${\boldsymbol R}^{\pm}_{N}(x,y)$ are sums of quasi-homogeneous polynomials of type $(1,2\ell^{\pm})$ with
degree greater or equal to $N$.
\label{NFFFPP}
\end{thm}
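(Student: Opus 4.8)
The plan is to normalize each subsystem separately, first at the linear (or quasi-homogeneous principal) level and then order by order, and afterwards glue the two piecewise transformations along $\{y=0\}$ so that continuity is preserved.  For the FF case I would begin from the observation that each subsystem has a focus at $O$, so its linear part has eigenvalues $\alpha^\pm \pm i\beta^\pm$ with $\beta^\pm\neq 0$; a real linear change of coordinates brings the upper subsystem to the form $\bigl(\begin{smallmatrix}\gamma_1^+ & -1\\ 1 & \gamma_1^+\end{smallmatrix}\bigr)$ plus higher-order terms, and similarly for the lower one.  The obstruction noted in the introduction is that one cannot independently rescale and rotate both halves: the linear map used on $y>0$ must agree on $y=0$ with the one used on $y<0$.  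I would resolve this by fixing the normalization of, say, the $y$-axis and the direction of the flow across the switching line, which uses up exactly the freedom that a general linear conjugacy would otherwise provide; this is why the normal form retains the parameters $\gamma_1^\pm$ rather than setting them to $0$ or $1$ as in the smooth case.  Concretely, I expect to invoke the linear normal form result of \cite{CX} (their first-order normal form) to get \eqref{NFF} at order $1$, with the piecewise-linear transformation automatically continuous because it is built to preserve $\{y=0\}$.

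For the higher-order FF terms I would proceed inductively on the degree $k$, assuming the system is already in the form \eqref{NFF} modulo terms of degree $\ge k$, and seek a near-identity transformation $\mathrm{id}+\bm{h}_k^\pm$ with $\bm{h}_k^\pm$ homogeneous of degree $k$ that removes the degree-$k$ terms not allowed in the normal form.  The homological equation $\mathcal{L}_k\bm{h}_k^\pm = (\text{unwanted part})$ with $\mathcal{L}_k$ the Lie bracket against the linear part $\bigl(\begin{smallmatrix}\gamma_1^\pm & -1\\ 1 & \gamma_1^\pm\end{smallmatrix}\bigr)$ has, after complexification, a kernel spanned by the resonant monomials; but because I must keep $\bm{h}_k^+$ and $\bm{h}_k^-$ matching on $y=0$, I cannot use the full freedom of $\bm{h}_k^\pm$, and the surviving resonant directions are exactly the monomials $y^i(x,y)^\top$ appearing in \eqref{NFF}.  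Here the key technical input is the claimed ``new normal form for analytic systems'': rather than the classical Poincaré--Dulac normal form, one wants a normal form in which the retained monomials are all of the shape $y^i \cdot (\text{linear})$, so that their restriction to $y=0$ is forced and the continuity matching at each order becomes a finite linear condition that can always be met.  I would state and prove this analytic normal form as a separate lemma and then apply it on each half-plane.  The main obstacle I anticipate is precisely checking that the continuity constraint on $\bm{h}_k^\pm|_{y=0}$ is compatible with solving the homological equation on both sides simultaneously — i.e., that the ``matching'' linear system is always consistent — and that the leftover monomials are exactly those listed; this is where the twoside structure genuinely differs from the smooth theory.

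For the FP and PP cases the parabolic side is not a focus but an invisible tangency of contact order $2\ell^\pm$, so the right scaling is quasi-homogeneous of type $(1,2\ell^\pm)$ rather than the standard grading.  I would introduce the quasi-homogeneous filtration in which $x$ has weight $1$ and $y$ has weight $2\ell^\pm$, write the parabolic subsystem's principal part as $(\,1,\; x^{2\ell^--1})^\top$ (respectively $(-1,\; x^{2\ell^+-1}+\cdots)^\top$ after the time/orientation normalization, using that PF reduces to FP), and run the same inductive normalization but now with the homological operator defined by the quasi-homogeneous Euler-type vector field.  The resonant terms that survive are the $\sigma^+_{i+1}x^{2\ell^+-1+i}$ appearing in \eqref{NPP}; on the focus side of FP one reuses the FF computation verbatim.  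Continuity matching is actually easier on a parabolic side because the tangency pins down the flow direction at $y=0$ and leaves fewer linear degrees of freedom to reconcile, so the consistency of the matching system should follow once the FF case is settled.  The remaining bookkeeping — that $\bm{G}^\pm_{N+2}$ collect homogeneous terms of degree $\ge N+2$ and $\bm{R}^\pm_N$ collect quasi-homogeneous terms of weighted degree $\ge N$, and that all transformations are analytic (indeed polynomial up to order $N$) — follows by tracking degrees through the induction.
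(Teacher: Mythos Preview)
Your overall architecture matches the paper's: normalize each half separately (a focus-type lemma producing the $y^i(x,y)^\top$ monomials, a parabolic-type lemma using the $(1,2\ell^\pm)$ quasi-homogeneous grading), then glue along $y=0$ via a piecewise homeomorphism preserving the switching line.  Two concrete ingredients are missing, however, and without them the argument does not close.

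First, the spatial near-identity transformation alone does \emph{not} reduce the focus side to the one-parameter family $\gamma_{k+1}y^k(x,y)^\top$.  Working in real coordinates (complexification is unnecessary and the paper avoids it), the homological operator on $\Delta^k=\{\boldsymbol h\in H_2^k: h_2(x,0)=0\}$ leaves a \emph{two}-dimensional complement, so after the diffeomorphism you only reach
\[
\sum_{k=1}^{N} y^k\begin{pmatrix}\nu_{k+1}&-\eta_{k+1}\\ \eta_{k+1}&\nu_{k+1}\end{pmatrix}\begin{pmatrix}x\\ y\end{pmatrix}+o(|(x,y)|^{N+1}),
\]
and a separate \emph{time rescaling} $dt\to\beta^{-1}(1-\sum_k T_ky^k)\,dt$ is needed to kill the $\eta_{k+1}$ and produce $\gamma_{k+1}$.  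The same device is essential on the parabolic side: the diffeomorphism yields $(a_0-\sum\mu_{k+1}x^k,\;-a_0x^{2\ell-1}+\sum\mu_{k+1}x^{k+2\ell-1})^\top$, and only a time scaling collapses this to $(\pm 1,\,x^{2\ell-1})^\top+\boldsymbol R_N$.  Second, the matching mechanism is more specific than ``a finite linear condition that can always be met'': the focus-side transformation carries one free constant at each degree (the coefficient $p_{k,0}$ of $x^k$ in its first component), and this freedom is precisely what is spent to make the two halves agree on $y=0$.  In FF one sets $p^\pm_{k,0}=0$; in FP one sets $p^+_{k,0}=r^-_{k,0}$ with $r^-_{k,0}$ \emph{determined} by the lower parabolic normalization.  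This is why your claim that PP matching is ``actually easier'' is backwards: once the lower half is fully simplified the $r^-_{k,0}$ are fixed, so the upper half can only be brought to a two-parameter form $(a_0^+-\sum\nu^+_{k+1}x^k,\;-a_0^+x^{2\ell^+-1}+\sum\eta^+_{k+1}x^{k+2\ell^+-1})^\top$, and the $\sigma^+_{k+1}$ in \eqref{NPP} arise by merging $\nu^+_{k+1}$ and $\eta^+_{k+1}$ through yet another time scaling.  The visible asymmetry between the two halves of \eqref{NPP} is a direct consequence of this constraint, not an artifact.
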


Note that as defined in \cite{AL}, a scalar polynomial function $f(x,y)$ and a polynomial vector field $\boldsymbol{f}(x,y)$ are called quasi-homogeneous of type $(p, q)$ and degree $k$ if for any $\lambda\in \mathbb{R}_+$ they respectively satisfy $f(\lambda^px,\lambda^qy)=\lambda^kf(x,y)$ and $$\boldsymbol{f}(\lambda^px,\lambda^qy)=
      \left(
      \begin{array}{cc}
      \lambda^{k+p} & 0 \\
      0 & \lambda^{k+q} \\
      \end{array}
      \right)\boldsymbol{f}(x,y).$$

\begin{thm}
For FF normal form~\eqref{NFF}, FP normal form~\eqref{NFP} and PP normal form~\eqref{NPP}, first nonzero Lyapunov constants $V_k$ ($1\le k\le N+1$) of $O$ are given respectively by
\begin{eqnarray*}
\begin{aligned}
&
V_k=\left\{\begin{aligned}
&e^{\gamma_1^+\pi}-e^{-\gamma_1^-\pi} &&{\rm if}~ k=1,\\
  &\left(\gamma_k^+\!+\!(\!-1)^{k-1}\gamma_k^-\right)C_k^{FF}&&{\rm if}~ k\ge 2;
  \end{aligned}\right.
\\
&
 V_k=\left\{\begin{aligned}
&e^{\gamma_1^+\pi}-1&&{\rm if}~k=1,\\
  &\gamma_k^+C_k^{FP}
   ~~~~~~~~~~~~~~~~~~~~&&{\rm if}~ k\ge 2;
  \end{aligned}\right.
\\
&
V_k=
\left\{
\begin{aligned}
&0&&{\rm if}~k~{\rm is~odd},\\
&
\sigma_k^+C_k^{PP}~~~~~~~~~~~~~~~~~~~~ &&{\rm if}~k~{\rm is~even};
\end{aligned}
\right.
\end{aligned}
\end{eqnarray*}
where
\begin{eqnarray*}
&&C_k^{FF}:=\frac{\pi(k-2)!!e^{\gamma_1^+\pi}}{(k-1)!!
   \left(1+(\gamma_1^+)^2\right)^{\frac{k-1}{2}}} \left(\frac{1+e^{\gamma_1^+\pi}}{\pi\sqrt{1+(\gamma_1^+)^2}
   }\right)^{\frac{1+(-1)^k}{2}}
   >0,
\\
&&C_k^{FP}:=\frac{\pi(k-2)!!}{(k-1)!!}
\left(\frac{2}{\pi}\right)^{\frac{1+(-1)^k}{2}}
>0,
\   \
C_k^{PP}:=\frac{2}{k+2\ell^+-1}>0.
\end{eqnarray*}
\label{Lya-FFPP}
\end{thm}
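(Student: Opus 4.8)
The plan is to compute the Taylor coefficients of $\Delta(x)=(\Pi^-)^{-1}(x)-\Pi^{+}(x)$ directly from \eqref{NFF}, \eqref{NFP}, \eqref{NPP}, by writing each half-return map as a power series in $x$ and subtracting. Since $\boldsymbol G^{\pm}_{N+2}$ and $\boldsymbol R^{\pm}_{N}$ have higher (weighted) degree, they influence only coefficients of order $>N+1$ and may be discarded throughout. On a focus side (the upper side in the FF and FP cases) I would pass to polar coordinates $x=r\cos\theta$, $y=r\sin\theta$; the crucial feature of these normal forms is that the nonlinear terms $\gamma_{i+1}^{+}y^{i}(x,y)^{\top}$ are \emph{radial}, so that $x\dot y-y\dot x\equiv x^{2}+y^{2}$, hence $\dot\theta\equiv1$, and the radius obeys the scalar non-autonomous equation
\[
\frac{dr}{d\theta}=\gamma_1^{+}r+\sum_{i=1}^{N}\gamma_{i+1}^{+}\,r^{\,i+1}\sin^{i}\theta .
\]
Writing $r(\theta;x)=\sum_{k\ge1}\phi_k(\theta)x^{k}$ gives $\phi_1=e^{\gamma_1^{+}\theta}$, and each $\phi_k$ follows from $\phi_1,\dots,\phi_{k-1}$ by a single quadrature with integrating factor $e^{-\gamma_1^{+}\theta}$; since the orbit sweeps $\theta\in[0,\pi]$, $\Pi^{+}(x)=-r(\pi;x)$. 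Running the lower subsystem backward through $y<0$ (substitution $\theta\mapsto\pi-\phi$, where now $\sin\theta<0$) turns the lower recursion into the upper one with parameters replaced by $\gamma_1^{-}\mapsto-\gamma_1^{-}$, $\gamma_{i+1}^{-}\mapsto(-1)^{i+1}\gamma_{i+1}^{-}$, so $(\Pi^-)^{-1}(x)$ is $-r(\pi;x)$ evaluated at these twisted parameters.

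On a parabolic side (the lower side in FP, both sides in PP) the leading field $(\mp1,x^{2\ell^{\pm}-1})^{\top}$ is exactly integrable: with the $\sigma$-corrections included on the upper side it has first integral $H^{+}(x,y)=y+F^{+}(x)$, $F^{+}(x)=\frac{x^{2\ell^{+}}}{2\ell^{+}}+\sum_{i\ge1}\frac{\sigma_{i+1}^{+}}{2\ell^{+}+i}x^{2\ell^{+}+i}$, and on the lower side it has the exact first integral $y-\frac{x^{2\ell^{-}}}{2\ell^{-}}$, which forces $(\Pi^-)^{-1}(x)=-x$ to the order needed. Hence $\Pi^{+}(x)=-g(x)$, where $g(x)>0$ is the solution near $0$ of $F^{+}(-g(x))=F^{+}(x)$; expanding $g(x)=x+\sum_{k\ge2}\delta_k x^{k}$ and matching powers of $x$ — using that $2\ell^{+}$ is even, so $(-g)^{2\ell^{+}+i}=(-1)^{i}g^{2\ell^{+}+i}$ — determines the $\delta_k$ recursively. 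This is the quasi-homogeneous blow-up of type $(1,2\ell^{\pm})$ (equivalently, generalized polar coordinates) written through the first integral.

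Substituting into $\Delta(x)=(\Pi^-)^{-1}(x)-\Pi^{+}(x)$ and extracting coefficients gives $V_1$ as the difference of the two linear parts — $e^{\gamma_1^{+}\pi}-e^{-\gamma_1^{-}\pi}$ (FF), $e^{\gamma_1^{+}\pi}-1$ (FP), $0$ (PP). For $k\ge2$ the decisive point is inductive. Once $V_1=\cdots=V_{k-1}=0$ are imposed, the resulting relations — $\gamma_1^{+}=-\gamma_1^{-}$ and $\gamma_j^{+}=(-1)^{j}\gamma_j^{-}$ for $2\le j\le k-1$ (FF); $\gamma_1^{+}=0=\gamma_2^{+}=\cdots=\gamma_{k-1}^{+}$ (FP); $\sigma_2^{+}=\cdots=\sigma_{k-1}^{+}=0$ (PP) — make the parameter twist of the previous step act as the identity on every coefficient of index $<k$. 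Therefore $\Delta(x)$ at order $x^{k}$ records only the change in the single coefficient $\gamma_k^{\pm}$ (resp. $\sigma_k^{+}$), so $V_k$ is \emph{linear} in it with coefficient equal to the quadrature in which that coefficient first appears: a weighted Wallis integral $\propto\int_0^{\pi}e^{(k-1)\gamma_1^{+}s}\sin^{k-1}s\,ds$ on a focus side, and a rational number on a parabolic side (the amplitude equation there is polynomial). The twist $\gamma_{i+1}^{-}\mapsto(-1)^{i+1}\gamma_{i+1}^{-}$ is exactly what produces the sign $(-1)^{k-1}$ before $\gamma_k^{-}$ in the FF formula, makes $V_k$ independent of the lower side in the FP case, and forces $V_k=0$ for odd $k$ in the PP case; evaluating the quadratures by the standard reduction formulas for $\sin^{k-1}$ and for powers of the generalized trigonometric functions yields $C_k^{FF}$, $C_k^{FP}$, $C_k^{PP}$.

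The main obstacle is precisely this inductive collapse, together with the orientation bookkeeping across $y=0$: one must verify rigorously that the lower half-return map is the twisted upper one (careful handling of the backward flow and of $\sin\theta<0$ in $y<0$), and that the centre-variety relations kill all lower-index contributions, so that the formula for $V_k$ ends up depending only on $\gamma_k^{\pm}$ (resp. $\sigma_k^{+}$) and on $\gamma_1^{+}$. A secondary, more technical layer is the parabolic side: correctly setting up the generalized polar coordinates / first-integral parametrization, checking that the higher-weight remainders $\boldsymbol R^{\pm}_{N}$ are negligible for $k\le N+1$, and carrying out the closed-form evaluation of the integrals of generalized trigonometric functions.
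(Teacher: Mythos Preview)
Your proposal is correct and follows essentially the same route as the paper's proof: polar coordinates on the focus side yielding the scalar equation $dr/d\theta=\gamma_1^{+}r+\sum\gamma_{k}^{+}r^{k}\sin^{k-1}\theta$, power-series expansion of $r(\theta;x)$, the parameter twist $(\gamma_1^{-},\gamma_{i+1}^{-})\mapsto(-\gamma_1^{-},(-1)^{i+1}\gamma_{i+1}^{-})$ for the lower flow (the paper does this via $(x,y,t)\to(x,-y,-t)$, which is your $\theta\mapsto\pi-\phi$), and the inductive collapse that makes the $\Phi$-remainders match once $V_1=\cdots=V_{k-1}=0$. The one genuine difference is on the parabolic side: the paper (Lemma~\ref{lcops}) first rescales time so that $\dot x\equiv -1$, then invokes \cite[Theorem~C]{DD} for the expansion of $\Pi^{+}$ via Bell polynomials, whereas you exploit the explicit first integral $H^{+}(x,y)=y+F^{+}(x)$ of the truncated system and read off $\Pi^{+}(x)=-g(x)$ from $F^{+}(-g(x))=F^{+}(x)$. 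Your route is more self-contained and makes the involution property $(-g)\circ(-g)=\mathrm{id}$ transparent, which is exactly what forces $V_k=0$ for odd $k$ once $\delta_2=\cdots=\delta_{k-1}=0$; the paper's route imports that structure from \cite{DD} but avoids the separate justification you flag as an obstacle, namely that the quasi-homogeneous remainder $\boldsymbol R_N^{\pm}$ affects only coefficients of order $>N+1$.
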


Clearly, by Theorem~\ref{Lya-FFPP} Lyapunov constant $V_k$ is algebraically equivalent
to
$\gamma_k^++(-1)^{k-1}\gamma_k^-$ for FF normal form and to $\gamma_k^+$ for FP normal form when $k\ge 1$,
and to $\mu_k^-$ for PP normal form when $k$ is even.
On the other hand, by Theorem~\ref{Lya-FFPP} the order of weak focus $O$ of system~\eqref{ge} is always non-integer when
$O$ is of PP type because $V_k=0$ for all odd $k$, which is consistent with \cite{DD}.
In order to determine the order of weak focus $O$ of system~\eqref{ge}, which
is important to obtain an upper bound of crossing limit cycles via Hopf bifurcations,
we give further information of the order by the subsystems as follows.

\begin{thm}
Suppose that the origin $O$ is a weak focus of order $\varsigma^+\in \mathbb{N}\cup\{+\infty \}$ of the upper
subsystem of \eqref{ge}. When $O$ is a weak focus of order $\varsigma^-\in \mathbb{N}\cup\{+\infty \}$ of the lower
subsystem, then the order $\varsigma$ of monodromic singular point $O$ of FF type of system~\eqref{ge} satisfies
\begin{description}
\item[(a)] $\varsigma=0$ if $\varsigma^+=0$(resp. $\varsigma^-=0$) and $\varsigma^-$(resp. $\varsigma^+$) $\in \mathbb{Z}^+\cup\{+\infty \}$ ;
\item[(b)] $\varsigma \in \left(\bigcup\limits_{i=0}^{m-1}\left\{\frac{2i+1}{2}\right\}\right)\cup \{m\}$ if $\varsigma^{\pm} \in \mathbb{Z}^+\cup\{+\infty \}$ and $\varsigma^+\ne \varsigma^-$, where $m:=\min\{\varsigma^+,\varsigma^-\}$;
\item[(c)] $\varsigma \in \left(\bigcup\limits_{i=0}^{\varsigma^+-1}\left\{\frac{2i+1}{2}\right\}\right)\cup \left(\bigcup\limits_{i=2\varsigma^+}^{+\infty}\{\frac{i}{2}\}\right)$ if $\varsigma^{\pm} \in \mathbb{Z}^+\cup\{+\infty \}$ and $\varsigma^+= \varsigma^-$.
\end{description}
When $O$ is an invisible tangent point of the lower
subsystem,
then the order $\varsigma$ of monodromic singular point $O$ of FP type of system~\eqref{ge} satisfies
\begin{description}
  \item[(d)] $\varsigma=0$ if $\varsigma^+=0$;
  \item[(e)] $\varsigma \in \left(\bigcup\limits_{i=0}^{\varsigma^+-1}\left\{\frac{2i+1}{2}\right\}\right)\cup \{\varsigma^+\}$ if $\varsigma^+\in \mathbb{Z}^+\cup\{+\infty \}$.
\end{description}
\label{OFF}
\end{thm}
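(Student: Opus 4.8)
The plan is to reduce everything to the normal forms of Theorem~\ref{NFFFPP} and the explicit Lyapunov constants of Theorem~\ref{Lya-FFPP}, and to rephrase the hypotheses on the subsystem orders $\varsigma^{\pm}$ as conditions on the normal-form coefficients $\gamma^{\pm}_k$. Since the near-identity transformation of Theorem~\ref{NFFFPP} restricts on each closed half-plane to an analytic (indeed polynomial) change of coordinates, it extends to a diffeomorphism of a neighbourhood of $O$ and conjugates the upper (resp.\ lower) subsystem of~\eqref{ge} to that of the normal form; as analytic conjugacy preserves the order of a focus, it suffices to prove the theorem for~\eqref{NFF} and~\eqref{NFP}. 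Moreover the linear part of the upper subsystem there has eigenvalues $\gamma^+_1\pm i$, so $O$ is a rough focus of that subsystem exactly when $\gamma^+_1\ne0$; that is, $\varsigma^+=0\iff\gamma^+_1\ne0$, and likewise for $\gamma^-_1$ in the FF case. Throughout I take $N$ larger than $2\varsigma+1$, $2\varsigma^++1$ and $2\varsigma^-+1$.

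The crux is the lemma: \emph{in~\eqref{NFF} or~\eqref{NFP} with $\gamma^+_1=0$ one has $\varsigma^+=\min\{j\ge1:\gamma^+_{2j+1}\ne0\}$}, with $\varsigma^+=+\infty$ if all odd-indexed $\gamma^+_k$ vanish, and symmetrically for $\gamma^-_k$ in the FF case. One sees quickly why only the odd-indexed coefficients enter: in polar coordinates the upper subsystem of~\eqref{NFF} satisfies $\dot\theta\equiv1$ and $\dot r=\sum_{i\ge1}\gamma^+_{i+1}r^{i+1}\sin^i\theta$ modulo the high-order remainder, so the monomials with $i$ even (those with odd-indexed $\gamma^+_k$) give the non-$\theta$-reversible part, and if all odd-indexed $\gamma^+_k$ vanish the subsystem is reversible in $\{y=0\}$, hence $O$ is a centre. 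For the precise statement I would argue as follows. Double the upper subsystem, i.e.\ let $S^+$ be the smooth analytic system whose right-hand side is the upper part of~\eqref{NFF} on the whole plane; its origin is a counterclockwise focus or centre, so its return map $\mathcal P^+$ on $\{x>0,y=0\}$ is defined and $\mathrm{ord}_{x\to0^+}(\mathcal P^+(x)-x)=2\varsigma^++1$ by definition of the focus order. Write $\mathcal P^+=B^+\circ A^+$, where $A^+$ and $B^+$ are the transitions of $S^+$ through $\{y>0\}$ and through $\{y<0\}$ respectively. Viewing $S^+$ as a piecewise-smooth system (same field on both sides), we get $\Pi^+=A^+$ and $(\Pi^-)^{-1}=(B^+)^{-1}$, so its displacement map is $\Delta^+(x)=(B^+)^{-1}(x)-A^+(x)=(B^+)^{-1}(x)-(B^+)^{-1}(\mathcal P^+(x))$; since $(B^+)^{-1}$ is an analytic diffeomorphism fixing $0$, $\mathrm{ord}(\Delta^+)=\mathrm{ord}(\mathcal P^+-\mathrm{id})=2\varsigma^++1$. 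But $S^+$, viewed piecewise, is already in FF normal form with lower coefficients equal to the upper ones, so Theorem~\ref{Lya-FFPP} gives $V^+_1=e^{\gamma^+_1\pi}-e^{-\gamma^+_1\pi}$ and $V^+_k=(1+(-1)^{k-1})\gamma^+_k C^{FF}_k$ for $2\le k\le N+1$, which is $0$ for even $k$ and $2\gamma^+_k C^{FF}_k$ (with $C^{FF}_k>0$) for odd $k$; hence $\mathrm{ord}(\Delta^+)$ equals the least odd index $k$ with $\gamma^+_k\ne0$. Comparing the two expressions and using $\gamma^+_1=0$ proves the lemma, and the same argument applied to the lower subsystem of~\eqref{NFF} (and to the upper one of~\eqref{NFP}) covers the rest.

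It then remains to substitute into Theorem~\ref{Lya-FFPP}. If $\varsigma^+=0$ or $\varsigma^-=0$ while the other order is positive, the formulas for $V_1$ give $V_1\ne0$, so $\varsigma=0$: this is (a) and (d). Otherwise $\gamma^{\pm}_1=0$, so $V_1=0$; for $k\ge2$ the even-index constants $V_{2j}$ carry no information about $\varsigma^{\pm}$, while by the lemma the odd-index constants $V_{2j+1}$ vanish whenever $j<\min\{\varsigma^+,\varsigma^-\}$ (FF) resp.\ $j<\varsigma^+$ (FP). For FF type with $m:=\min\{\varsigma^+,\varsigma^-\}$ and $\varsigma^+\ne\varsigma^-$, exactly one of $\gamma^{\pm}_{2m+1}$ is nonzero, so $V_{2m+1}\ne0$; hence the first nonzero $V_k$ has index in $\{2,4,\dots,2m,2m+1\}$ and $\varsigma\in(\bigcup_{i=0}^{m-1}\{\frac{2i+1}{2}\})\cup\{m\}$, which is (b). For FF type with $\varsigma^+=\varsigma^-$ the lemma only forces $\gamma^+_{2j+1}=\gamma^-_{2j+1}=0$ for $j<\varsigma^+$, so the index of the first nonzero $V_k$ (if any) just avoids $\{1,3,\dots,2\varsigma^+-1\}$; hence $\varsigma$ omits the integers $1,\dots,\varsigma^+-1$, which is exactly (c). For FP type with $\varsigma^+\in\mathbb Z^+\cup\{+\infty\}$, $V_{2j+1}=\gamma^+_{2j+1}C^{FP}_{2j+1}$ vanishes for $j<\varsigma^+$ and is nonzero at $j=\varsigma^+$ when $\varsigma^+$ is finite, so the first nonzero $V_k$ has index in $\{2,4,\dots,2\varsigma^+,2\varsigma^++1\}$ and $\varsigma\in(\bigcup_{i=0}^{\varsigma^+-1}\{\frac{2i+1}{2}\})\cup\{\varsigma^+\}$, which is (e).

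I expect the main obstacle to be the lemma, i.e.\ pinning down which normal-form coefficient controls the intrinsic focus order of a subsystem. The delicate points are: checking that the doubled system $S^+$ genuinely satisfies $\Pi^+=A^+$ and $(\Pi^-)^{-1}=(B^+)^{-1}$ with the correct counterclockwise orientation, so that Theorem~\ref{Lya-FFPP} is applicable to it; the factorization $\mathcal P^+=B^+\circ A^+$ together with the remark that post-composing with the analytic diffeomorphism $(B^+)^{-1}$ does not move the order of the first nonzero term, so that $\mathrm{ord}(\Delta^+)=\mathrm{ord}(\mathcal P^+-\mathrm{id})$; and keeping $N$ large enough that the finite-order normal form still detects $\gamma^{\pm}_{2\varsigma^{\pm}+1}$. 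Everything else is substitution into Theorem~\ref{Lya-FFPP} plus the obvious monotonicity of ``the index of the first nonzero coefficient''.
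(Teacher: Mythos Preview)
Your proposal is correct and follows essentially the same route as the paper: both arguments ``double'' each subsystem to a smooth analytic system in FF normal form with identical upper and lower coefficients, apply Theorem~\ref{Lya-FFPP} to that doubled system to conclude that $\varsigma^{\pm}$ is governed by the first nonzero odd-indexed $\gamma^{\pm}_{2j+1}$, and then substitute into the Lyapunov-constant formulas to read off cases (a)--(e). Your treatment of the key lemma is in fact more explicit than the paper's, which simply asserts the conclusion; your factorisation $\mathcal P^+=B^+\circ A^+$ and the observation that post-composition by the analytic diffeomorphism $(B^+)^{-1}$ preserves the order of $\mathcal P^+-\mathrm{id}$ make transparent why $\mathrm{ord}(\Delta^+)=2\varsigma^++1$.
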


From Theorem~\ref{OFF}, we see that the order $\varsigma$ of  weak focus $O$ of system~\eqref{ge}
does not depend on the multiplicity of tangent point $O$ of one subsystem when
$O$ is of FP type. For PP type, there is no clear relationship between the order $\varsigma$ and
the multiplicity of tangent point. There exist examples with $\varsigma$ greater than or less than or equal to
the multiplicity of tangent point. On the other hand,
by the proof of Theorem~\ref{OFF} given in section 3 there are examples satisfying
$\varsigma=s$ for any given $s\in \bigcup_{i=0}^{+\infty}\{i/2\}$ if $\varsigma^+=\varsigma^-=0$.

This paper is organized as follows.
We give some preliminary lemmas about using appropriate homeomorphism which preserves switching line $y=0$ to reduce subsystem of system~\eqref{ge} to normal form and computing responding return map in section 2.
The proofs of Theorems~\ref{NFFFPP}, \ref{Lya-FFPP} and \ref{OFF} are given in section 3.
In section 4 we give some interesting corollaries and discussions with respect to the relationships between our normal forms and former
results about smooth systems and non-smooth one.
In section 5, we apply our main results to the center problem
and to those systems for correction in some published works.


\section{Preliminary lemmas}
\setcounter{equation}{0}
\setcounter{lm}{0}
\setcounter{thm}{0}
\setcounter{rmk}{0}
\setcounter{df}{0}
\setcounter{cor}{0}

Before proving main results, we give some preliminary lemmas in this section.

\begin{lm}\label{niih}
Transformation
\begin{equation}\label{geni}
  \left(
    \begin{array}{c}
      x \\
      y \\
    \end{array}
  \right)=
  \left\{\begin{aligned}
  & \left(\begin{array}{cc}
      1 & q_2^+ \\
      0 & q_1^+ \\
    \end{array}\right)
  \left(
    \begin{array}{c}
      u \\
      v \\
    \end{array}
  \right)+
  \left(\begin{array}{c}
      \Phi^+(u,v) \\
      \Psi^+(u,v)\\
    \end{array}\right)~~{\rm for~}v\ge0,\\
    &\left(\begin{array}{cc}
      1 & q_2^- \\
      0 & q_1^- \\
    \end{array}\right)
    \left(
    \begin{array}{c}
      u \\
      v \\
    \end{array}
  \right)+
  \left(\begin{array}{c}
       \Phi^-(u,v) \\
      \Psi^-(u,v)\\
    \end{array}\right)~~{\rm for~}v\le0
  \end{aligned}
  \right.
\end{equation}
is a homeomorphism on a small neighborhood $U$ of $O$, where $q_{1}^\pm, q_2^\pm\in \mathbb{R}$, $q_1^+ q_1^->0$ and
$\Phi^{\pm}, \Psi^{\pm}=o(|(u,v)|)$ are real analytic functions satisfying
$\Phi^+(u,0)=\Phi^-(u,0), \Psi^+(u,0)=\Psi^-(u,0)=0$.
\end{lm}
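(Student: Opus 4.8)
The plan is to verify directly that the piecewise map in \eqref{geni} is a well-defined continuous bijection near $O$ with continuous inverse, treating continuity across the switching line as the only nontrivial point. First I would check \emph{continuity on} $\{v=0\}$: for a point $(u,0)$ the upper branch gives $x=u+\Phi^+(u,0)$, $y=\Psi^+(u,0)$ and the lower branch gives $x=u+\Phi^-(u,0)$, $y=\Psi^-(u,0)$; by the hypotheses $\Phi^+(u,0)=\Phi^-(u,0)$ and $\Psi^+(u,0)=\Psi^-(u,0)=0$ these agree, so the two pieces glue into a single continuous map $H$ on a neighborhood of $O$. Note moreover that $H$ maps $\{v=0\}$ into $\{y=0\}$, so the switching line is preserved, and $H(0,0)=(0,0)$.

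Next I would establish that $H$ is a homeomorphism on a sufficiently small neighborhood $U$ of $O$. The natural tool is the inverse function theorem applied to each branch separately: each branch has the form $(u,v)\mapsto A^{\pm}(u,v)^\top + (\Phi^{\pm},\Psi^{\pm})^\top$ with $A^{\pm}=\begin{pmatrix}1 & q_2^{\pm}\\ 0 & q_1^{\pm}\end{pmatrix}$, and since $\Phi^{\pm},\Psi^{\pm}=o(|(u,v)|)$ their differentials vanish at the origin, so the Jacobian of each branch at $O$ equals $A^{\pm}$, which is invertible because $\det A^{\pm}=q_1^{\pm}\ne 0$ (a consequence of $q_1^+q_1^->0$). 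Hence each branch is an analytic diffeomorphism from a one-sided neighborhood $U^{\pm}=U\cap\{\pm v\ge 0\}$ onto its image, shrinking $U$ if necessary. It remains to see that these two local diffeomorphisms assemble into a global homeomorphism of $U$: injectivity of $H$ on $U$ follows because the upper branch maps $\{v>0\}$ into $\{y>0\}$ and the lower branch maps $\{v<0\}$ into $\{y<0\}$ near $O$ (again because $\Psi^{\pm}=o(|(u,v)|)$ and the linear parts $q_1^{\pm}v$ have the same sign as $v$ when $q_1^{\pm}$ has a fixed sign — here one uses $q_1^+q_1^->0$ to ensure both $q_1^{\pm}$ are, say, positive after an orientation normalization, or one argues separately for both sign choices), so the images of the open half-neighborhoods are disjoint and the only overlap is the common image of $\{v=0\}$. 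Surjectivity onto a neighborhood of $O$ and continuity of the inverse then follow from invariance of domain together with the fact that $H^{-1}$ restricted to each closed half of $H(U)$ is the (continuous) inverse of the corresponding branch, and these agree on the shared boundary.

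The main obstacle I anticipate is precisely the gluing argument across $\{v=0\}$: one must rule out that the two one-sided diffeomorphisms overlap in an unexpected way or fail to cover a full neighborhood, i.e. that $H(U)$ genuinely contains an open set around $O$ and that $H^{-1}$ is single-valued and continuous there. The clean way to dispatch this is to observe that $H$ is continuous and \emph{injective} on the open set $U$ and then invoke invariance of domain (Brouwer), which gives at once that $H(U)$ is open and $H:U\to H(U)$ is a homeomorphism; the sign condition $q_1^+q_1^->0$ is exactly what is needed to guarantee injectivity by keeping the upper and lower images on opposite sides of $\{y=0\}$. I would therefore structure the proof as: (i) continuity and gluing on $\{v=0\}$; (ii) each branch is an analytic diffeomorphism on a one-sided neighborhood via the inverse function theorem, using $\det A^{\pm}=q_1^{\pm}\ne0$; (iii) $H$ is injective on a small $U$, using $q_1^+q_1^->0$ to separate the half-images; (iv) conclude via invariance of domain that $H$ is a homeomorphism of $U$ onto a neighborhood of $O$ preserving $\{y=0\}$.
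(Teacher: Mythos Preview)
Your proposal is correct and follows essentially the same line as the paper's proof: continuity via the gluing hypothesis on $\{v=0\}$, the inverse function theorem on each branch using $\det A^{\pm}=q_1^{\pm}\ne 0$, and the sign condition $q_1^+q_1^->0$ to keep the half-plane images on opposite sides of $\{y=0\}$. The only difference is in the concluding step: the paper constructs the piecewise inverse $\boldsymbol{G}^{\pm}$ directly and checks that the two branches agree on $\{y=0\}$ (since $\boldsymbol{F}^+(u,0)=\boldsymbol{F}^-(u,0)$ forces $\boldsymbol{G}^+(x,0)=\boldsymbol{G}^-(x,0)$), rather than appealing to invariance of domain---a more elementary route that yields the same conclusion.
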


\begin{proof}
We denote the map on the upper (resp. lower) half plane by ${\boldsymbol F}^+(u,v)$ (resp. ${\boldsymbol F}^-(u,v)$).
Since ${\boldsymbol F}^+(u,0)={\boldsymbol F}^-(u,0)$ by $\Phi^+(u,0)=\Phi^-(u,0)$ and $\Psi^+(u,0)=\Psi^-(u,0)=0$,
we get that map~\eqref{geni} is continuous on the $u$-axis and hence continuous on $U$.
From $\Psi^{\pm}(u,0)=0$, we get
$
  y=q_1^\pm v+\Psi^{\pm}(u,v)=q_1^\pm v(1+g^{\pm}(u,v)),
$
where $g^{\pm}(u,v)$ are analytic and satisfy $g^{\pm}(0,0)=0$.
Thus, \eqref{geni} maps $v>0$, $v=0$, $v<0$ to $y>0$, $y=0$, $y<0$ respectively in $U$ if $q_1^+>0$, and to
$y<0$, $y=0$, $y>0$ respectively if $q_1^+<0$.
Moreover, by $\det D{\boldsymbol F}^{\pm}(0,0)=q_1^\pm\ne 0$ and the Inverse Mapping Theorem, \eqref{geni} has inverse map
\begin{equation}\label{ioni}
  \left(
    \begin{array}{c}
      u \\
      v \\
    \end{array}
  \right)=
  \left\{\begin{aligned}
  &{\boldsymbol G}^{+}(x,y),~~y\ge0,\\
  &{\boldsymbol G}^{-}(x,y),~~y\le0
  \end{aligned}
  \right.
\end{equation}
in a small neighbourhood of $O$.
Clearly, \eqref{ioni} maps $y=0$ to $v=0$. Moreover, ${\boldsymbol G}^{+}(x,0)={\boldsymbol G}^{-}(x,0)$ because ${\boldsymbol F}^+(u,0)={\boldsymbol F}^-(u,0)$.
Thus, \eqref{ioni} is continuous. This lemma is proved.
\end{proof}

Since homeomorphism~\eqref{geni} maps $v=0$ to $y=0$, it transforms system~\eqref{ge} to a new system with switching line $v=0$.
Furthermore, \eqref{geni} transforms orbits of upper (resp. lower) subsystem of \eqref{ge} to orbits of upper (resp. lower) subsystem of the new system if $q_1^+>0$, and to orbits of lower (resp. upper) subsystem of the new system if $q_1^+<0$.
Note that homeomorphism~\eqref{geni} is continuous on line $v=0$,
we get that transformation~\eqref{geni} maps orbits of system~\eqref{ge} to orbits of the
new system by the uniqueness of orbits near monodromic singular point $O$ for these two systems.
That is to say, these two systems are topologically equivalent to each other.

Clearly, the change of variables $(x,y)\to (x,-y)$ transforms a lower subsystem into an upper one.
Thus, in the following two lemmas we simplify upper subsystem of system~\eqref{ge} without loss of generality.

\begin{lm}\label{niofs}
Suppose that the upper subsystem of \eqref{ge} has a weak focus $O$ with  eigenvalues $\alpha\pm\beta i$.
Then by a diffeomorphism
\begin{equation}\label{dhfs}
  \left(\begin{array}{c}
      x \\
      y \\
    \end{array}
  \right)\to
   \left(
     \begin{array}{cc}
       1 & q_2 \\
       0 & q_1 \\
     \end{array}
   \right)
  \left(
   \begin{array}{c}
      x \\
      y \\
    \end{array}
  \right)+\sum_{k=2}^{N+1}
  \left(\begin{array}{c}
      \sum\limits_{j=0}^{k}r_{k-j,j}x^{k-j} y^{j} \\
      \sum\limits_{j=1}^{k}s_{k-j,j}x^{k-j} y^{j} \\
    \end{array}\right)
\end{equation}
in a small neighborhood of $O$, the upper subsystem of \eqref{ge} is transformed equivalently into
\begin{equation}\label{nfN}
  \left(\begin{array}{c}
      \dot{x} \\
      \dot{y} \\
    \end{array}\right)
    =\left(\begin{array}{cc}
      \alpha & -\beta \\
      \beta & \alpha \\
    \end{array}\right)
  \left(\begin{array}{c}
      x \\
      y \\
    \end{array}\right)
    +\sum_{k=1}^{N}y^k
    \left(\begin{array}{cc}
      \nu_{k+1} & -\eta_{k+1} \\
      \eta_{k+1} & \nu_{k+1} \\
    \end{array}\right)\left(\begin{array}{c}
      x \\
      y \\
    \end{array}\right)
    +o\left(|(x,y)|^{N+1}\right).
\end{equation}
Moreover, there exists a time scaling
\begin{equation}\label{tsfs}
  dt\to\frac{1}{\beta}\left(1-\sum_{k=1}^{N}T_ky^k\right)dt
\end{equation}
changing system~\eqref{nfN} into
\begin{equation}\label{nffs}
  \left(\begin{array}{c}
      \dot{x} \\
      \dot{y} \\
    \end{array}\right)
    =\left(\begin{array}{cc}
      \gamma_1 & -1 \\
      1 &  \gamma_1 \\
    \end{array}\right)
  \left(\begin{array}{c}
      x \\
      y \\
    \end{array}\right)
    +\sum_{k=1}^{N}{\gamma}_{k+1} y^k
    \left(\begin{array}{c}
      x \\
      y \\
    \end{array}\right)
    +o\left(|(x,y)|^{N+1}\right),
\end{equation}
where $\gamma_1:=\alpha/\beta$ and each $T_k$ is a polynomial in $\eta_2,...,\eta_{k+1}$.
\end{lm}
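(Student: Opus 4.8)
\emph{Proof idea.} I will reach \eqref{nfN} in two moves --- a linear normalization that fixes $q_1,q_2$, then a degree-by-degree nonlinear normalization --- and then produce \eqref{nffs} by the time rescaling \eqref{tsfs}. Throughout I use the complex coordinate $z=x+\mathrm i y$, so that $y=(z-\bar z)/(2\mathrm i)$; complexifying diagonalizes the homological operator and keeps the bookkeeping clean. Since orbits rotate counterclockwise we may take $\beta>0$.

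\emph{Linear part.} Write the linear part of the upper subsystem as the matrix with entries $a,b,c,d$. Because $O$ is a focus, $\beta\ne0$ and $c\ne0$ (otherwise the matrix is triangular with real spectrum). A short computation using $a+d=2\alpha$ and $ad-bc=\alpha^2+\beta^2$ shows that conjugating by $\bigl(\begin{smallmatrix}1&q_2\\0&q_1\end{smallmatrix}\bigr)$ with $q_1=c/\beta\ne0$ and $q_2=(a-\alpha)/\beta$ turns the linear part into $\bigl(\begin{smallmatrix}\alpha&-\beta\\\beta&\alpha\end{smallmatrix}\bigr)$, that is, into $\dot z=\lambda z+o(|z|)$ with $\lambda=\alpha+\beta\mathrm i$.

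\emph{Nonlinear part.} A degree-$k_0$ near-identity change $z=\zeta+H_{k_0}(\zeta,\bar\zeta)$, $H_{k_0}=\sum_{p+q=k_0}a_{pq}\zeta^p\bar\zeta^q$, corresponds in real coordinates to a degree-$k_0$ summand of the shape appearing in \eqref{dhfs} exactly when the $y$-component of its correction is divisible by $y$; since a homogeneous polynomial $\sum_{p+q=k_0}c_{pq}z^p\bar z^q$ is divisible by $z-\bar z$ iff $\sum_{p+q=k_0}c_{pq}=0$, this amounts to the single real condition $\sum_{p+q=k_0}a_{pq}\in\mathbb R$. Such a change replaces the degree-$k_0$ part $F_{k_0}=\sum_{p+q=k_0}f_{pq}\zeta^p\bar\zeta^q$ of $\dot z$ by $F_{k_0}-\sum_{p+q=k_0}\mu_{pq}a_{pq}\zeta^p\bar\zeta^q$, where $\mu_{pq}:=(p-1)\lambda+q\bar\lambda$, and leaves all lower-degree terms untouched. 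The aim is to choose $H_{k_0}$ so that the new degree-$k_0$ part of $\dot z$ becomes $c_{k_0}\,y^{k_0-1}z$ for some $c_{k_0}\in\mathbb C$; writing $y^{k_0-1}z=\sum_{p+q=k_0}b_{pq}z^p\bar z^q$ with $b_{k_0-j,\,j}=(-1)^j\binom{k_0-1}{j}(2\mathrm i)^{-(k_0-1)}$ for $0\le j\le k_0-1$ and $b_{0,k_0}=0$, this means solving $f_{pq}-\mu_{pq}a_{pq}=c_{k_0}b_{pq}$ for all $p+q=k_0$ under $\sum a_{pq}\in\mathbb R$. When every $\mu_{pq}\ne0$ --- which fails only if $\alpha=0$ and $k_0$ is odd --- set $a_{pq}=(f_{pq}-c_{k_0}b_{pq})/\mu_{pq}$; the reality condition then reads $\operatorname{Im}\bigl(\sum_{pq}f_{pq}/\mu_{pq}-c_{k_0}\sum_{pq}b_{pq}/\mu_{pq}\bigr)=0$, which can be solved for $c_{k_0}$ as soon as $\sum_{p+q=k_0}b_{pq}/\mu_{pq}\ne0$. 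Using $\mu_{k_0-j,\,j}=(k_0-1)\lambda-2\beta j\mathrm i$ and the partial-fraction identity $\sum_{j=0}^{m}(-1)^j\binom mj/(A+Bj)=B^m m!/\prod_{j=0}^m(A+Bj)$ (take $m=k_0-1$, $A=(k_0-1)\lambda$, $B=-2\beta\mathrm i$), that sum equals $(-\beta)^{k_0-1}(k_0-1)!\,/\prod_{j=0}^{k_0-1}\bigl((k_0-1)\lambda-2\beta j\mathrm i\bigr)$, whose denominator never vanishes: $(k_0-1)(\alpha+\beta\mathrm i)$ is purely imaginary only if $\alpha=0$, and then $k_0$ is even (the case at hand), so $(k_0-1)\ne2j$ for any integer $j$. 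In the remaining case ($\alpha=0$, $k_0$ odd) the single index $(p^\ast,q^\ast)=\bigl(\tfrac{k_0+1}{2},\tfrac{k_0-1}{2}\bigr)$ has $\mu_{p^\ast q^\ast}=0$: then $c_{k_0}$ is forced to equal $f_{p^\ast q^\ast}/b_{p^\ast q^\ast}$ (with $b_{p^\ast q^\ast}\ne0$), the remaining $a_{pq}$ are determined as above, and the now-free $a_{p^\ast q^\ast}$ absorbs the condition $\sum a_{pq}\in\mathbb R$. Doing this for $k_0=2,3,\dots,N+1$ in turn, and observing that having second-coordinate component equal to $y$ times a nonvanishing analytic factor is stable under composition, the composite of these maps with the linear change is a transformation of the form \eqref{dhfs} that carries the upper subsystem to \eqref{nfN} with $\nu_{k+1}=\operatorname{Re}c_{k+1}$, $\eta_{k+1}=\operatorname{Im}c_{k+1}$.

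\emph{Time rescaling and the main obstacle.} In complex form \eqref{nfN} is $\dot z=\bigl(A(y)+B(y)\mathrm i\bigr)z+o(|z|^{N+1})$ with $A(y)=\alpha+\sum_{k\ge1}\nu_{k+1}y^k$ and $B(y)=\beta+\sum_{k\ge1}\eta_{k+1}y^k$; the rescaling \eqref{tsfs} multiplies the field by $\beta^{-1}\bigl(1-\sum_{k=1}^{N}T_ky^k\bigr)$, and taking $1-\sum_{k=1}^{N}T_ky^k$ to be the degree-$N$ truncation of $\beta/B(y)=\bigl(1+\sum_{k\ge1}(\eta_{k+1}/\beta)y^k\bigr)^{-1}$ --- whose geometric-series expansion exhibits each $T_k$ as a polynomial in $\eta_2,\dots,\eta_{k+1}$ --- makes the coefficient of $z$ equal, up to order $N$, to $A(y)/B(y)+\mathrm i=\gamma_1+\mathrm i+\sum_{k\ge1}\gamma_{k+1}y^k$ with $\gamma_1=\alpha/\beta$ and all $\gamma_{k+1}$ real, i.e.\ \eqref{nffs} in real coordinates. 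The genuine obstacle is the nonlinear step: the switching-line constraint $\sum a_{pq}\in\mathbb R$ removes one real degree of freedom compared with the classical normalization, and one must verify that the prescribed target $y^{k_0-1}z$ is nevertheless reachable --- exactly the non-vanishing $\sum_{p+q=k_0}b_{pq}/\mu_{pq}\ne0$, which I would establish through the partial-fraction identity above. Two further points need care: the resonant subcase $\alpha=0$, $k_0$ odd, where $c_{k_0}$ becomes uniquely determined and ultimately carries the Lyapunov constants; and the verification that the constrained transformations compose to a map of the form \eqref{dhfs}.
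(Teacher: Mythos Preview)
Your argument is correct and takes a genuinely different route from the paper. The paper stays in real coordinates throughout: it defines the constrained space $\Delta^k=\{\boldsymbol h\in H_2^k: h_2(x,0)=0\}$, computes the homological operator $ad_A^k$ on a real monomial basis, and solves the resulting linear system explicitly degree by degree, obtaining closed formulas for all $p_{i,j},q_{i,j},\nu_k,\eta_k$ (with one free constant $C=p_{k,0}$ at each step). You instead pass to $z=x+\mathrm i y$, where the homological operator is diagonal with eigenvalues $\mu_{pq}=(p-1)\lambda+q\bar\lambda$, and translate the switching-line constraint into the single real condition $\sum a_{pq}\in\mathbb R$; the heart of your proof is then the non-vanishing of $\sum b_{pq}/\mu_{pq}$, which you settle cleanly via the partial-fraction identity $\sum_{j=0}^m(-1)^j\binom{m}{j}/(A+Bj)=B^m m!/\prod_{j=0}^m(A+Bj)$. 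What the paper's approach buys is explicitness: its recursive formulas feed directly into the Maple routines and into the later computation of Lyapunov constants. What your approach buys is transparency: the obstruction to reaching the target $y^{k_0-1}z$ under the axis-preserving constraint is isolated as a single scalar, the resonant case $\alpha=0$, $k_0$ odd is handled uniformly, and the time-rescaling step becomes the one-line observation that truncating $\beta/B(y)$ kills the imaginary part of the $z$-coefficient. The two arguments are logically independent but equivalent; your closing remarks about composition (axis-preservation is stable under composition, and truncating the composite at degree $N+1$ does not disturb the normal form modulo $o(|(x,y)|^{N+1})$) are exactly what is needed to assemble the iterated steps into a single map of the shape~\eqref{dhfs}.
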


\begin{proof}
We use variable transformations and time scaling to reduce the upper subsystem of \eqref{ge}. So, this proof is given in following two steps.

{\it Step 1: transform the upper subsystem of \eqref{ge} into \eqref{nfN}.}
By \cite[Theorem 2.2]{CX}, there exists a linear transformation
\begin{equation}\label{tl}
  \left(\begin{array}{c}
      x \\
      y \\
    \end{array}
  \right)\to
   \left(
     \begin{array}{cc}
       1 & q_2 \\
       0 & q_1 \\
     \end{array}
   \right)
   \left(\begin{array}{c}
     x \\
     y \\
   \end{array}
   \right)
\end{equation}
with $q_2>0$ such that the upper subsystem of system~\eqref{ge} is written as
\begin{equation}\label{fcg}
  \left(\begin{array}{c}
      \dot{x} \\
      \dot{y} \\
    \end{array}
  \right)=
  \left(\begin{array}{cc}
      \alpha & -\beta \\
      \beta & \alpha \\
    \end{array}\right)
  \left(\begin{array}{c}
      x \\
      y \\
    \end{array}\right)+
  \left(\begin{array}{c}
      \sum\limits_{i+j\geq 2} {a}_{ij} x^iy^j \\
      \sum\limits_{i+j\geq 2} {b}_{ij} x^iy^j \\
    \end{array}\right).
\end{equation}
For the nonlinear part of system~\eqref{fcg}, we claim that there exists a near-identity transformation
\begin{equation}\label{nifs}
  \left(\begin{array}{c}
      x \\
      y \\
    \end{array}
  \right)\to
  \left(
   \begin{array}{c}
      x \\
      y \\
    \end{array}
  \right)+\sum_{k=2}^{N+1}
  \left(\begin{array}{c}
      \sum\limits_{j=0}^{k}p_{k-j,j}x^{k-j} y^{j} \\
      \sum\limits_{j=1}^{k}q_{k-j,j}x^{k-j} y^{j} \\
    \end{array}\right)
\end{equation}
changing system~\eqref{fcg} into system~\eqref{nfN}.
In fact, let $\Delta^k:=\left\{{\boldsymbol h}^k(x,y)\in H_2^k \mid h^k_2(x,0)=0 \right\}$,
where $H_2^k$ denotes the linear space of all planar vector-valued homogeneous polynomials of degree $k$
and $h^k_2(x,y)$ is the second component of ${\boldsymbol h}^k(x,y)\in H_2^k$.
  Clearly, near-identity transformation
  \begin{equation*}
    \left(\begin{array}{c}
        x \\
        y \\
      \end{array}
    \right)
    \to \left(\begin{array}{c}
        x \\
        y \\
      \end{array}
    \right)+{\boldsymbol g}^k(x,y),~~~~{\boldsymbol g}^k(x,y)\in \Delta^k
  \end{equation*}
  rewrites system~\eqref{fcg} as
  \begin{align}\label{ffk}
    \left(\begin{array}{c}
        \dot{x} \\
        \dot{y} \\
    \end{array}\right)
    =&A \left(\begin{array}{c}
        x \\
        y \\
      \end{array}\right)
      +\left(\begin{array}{c}
      \sum\limits_{i+j= 2}^{k-1} \tilde{a}_{ij} x^iy^j \\\notag
      \sum\limits_{i+j= 2}^{k-1} \tilde{b}_{ij} x^iy^j \\
    \end{array}\right)\\
    &+\left\{\left(\begin{array}{c}
      \sum\limits_{i+j= k} \tilde{a}_{ij} x^iy^j \\
      \sum\limits_{i+j= k} \tilde{b}_{ij} x^iy^j \\
    \end{array}\right)
    -
    \left[ D{\boldsymbol g}^k A
    \left(\begin{array}{c}
        x \\
        y \\
      \end{array}\right)-A{\boldsymbol g}^k(x,y) \right]\right\}+o\left( |(x,y)|^{k}\right),
  \end{align}
  where $A$ is the linear part matrix of system~\eqref{fcg} and $D{\boldsymbol g}^k$ is the Jacobi matrix of ${\boldsymbol g}^k(x,y)$.
  We define a linear operator $ad_A^k:\Delta^k\to H_2^k$ by
  \begin{equation*}
    ad_A^k{\boldsymbol g}^k(x,y)=D{\boldsymbol g}^k A
    \left(\begin{array}{c}
        x \\
        y \\
      \end{array}\right)-A{\boldsymbol g}^k(x,y).
  \end{equation*}
  Let ${\rm Im}~ad_A^k$ be the image of $ad_A^k$ and $\Theta^k$ be a complementary subspace to ${\rm Im}~ad_A^k$ in $H_2^k$, then we get decomposition
$
    H_2^k={\rm Im}~ad_A^k\oplus \Theta^k.
$
  According to the terms of degree $k$ in system~\eqref{ffk}, our claim below system~\eqref{fcg} can be proved
  by finding ${\boldsymbol g}^k(x,y)\in \Delta^k$ and $\nu_{k}, \eta_{k}$ to satisfy
  \begin{equation}\label{ffe}
    \left(\begin{array}{c}
      \sum\limits_{i+j= k} \tilde{a}_{ij} x^iy^j \\
      \sum\limits_{i+j= k} \tilde{b}_{ij} x^iy^j \\
    \end{array}\right)-ad_A^k{\boldsymbol g}^k(x,y)=
    \left(\begin{array}{c}
      \nu_{k}xy^{k-1}-\eta_{k}y^k \\
      \eta_{k}xy^{k-1}+\nu_{k}y^k \\
    \end{array}\right)\in \Theta^k.
  \end{equation}

  Clearly, the set $\left\{(x^2,0)^\top, (xy,0)^\top, (y^2,0)^\top, (0,xy)^\top, (0,y^2)^\top \right\}$ is a basic of $\Delta^2$,
  and straight computation shows that
  \begin{align*}
    &ad_A^2 \left(\begin{array}{c}
        x^2 \\
        0 \\
      \end{array}\right) =
      \left(
        \begin{array}{c}
          \alpha x^2 -2\beta xy \\
          -\beta x^2 \\
        \end{array}
      \right),~~~~
    ad_A^2 \left(\begin{array}{c}
        xy \\
        0 \\
      \end{array}\right) =
      \left(
        \begin{array}{c}
          \beta x^2 + \alpha xy-\beta y^2 \\
          -\beta xy \\
        \end{array}
      \right),\\
    &ad_A^2 \left(\begin{array}{c}
        y^2 \\
        0 \\
      \end{array}\right) =
      \left(
        \begin{array}{c}
          2 \beta xy + \alpha y^2 \\
          -\beta y^2 \\
        \end{array}
      \right),~~~~
    ad_A^2 \left(\begin{array}{c}
        0 \\
        xy \\
      \end{array}\right) =
       \left(
        \begin{array}{c}
          \beta xy \\
          \beta x^2 + \alpha xy-\beta y^2 \\
        \end{array}
      \right),\\
    &ad_A^2 \left(\begin{array}{c}
        0 \\
        y^2 \\
      \end{array}\right)=
      \left(
        \begin{array}{c}
        \beta y^2\\
        2 \beta xy + \alpha y^2 \\
        \end{array}
      \right).
  \end{align*}
  Associated with the linearity of $ad_A^2$ , we get
  \begin{align*}
    ad_A^2{\boldsymbol g}^2(x,y) & \!=\!ad_A^2\left[
    p_{20}\left(\begin{array}{c}
        x^2 \\
        0 \\
      \end{array}\right)\!+\!
    p_{11}\left(\begin{array}{c}
        xy \\
        0 \\
      \end{array}\right)
      \!+\!p_{02}\left(\begin{array}{c}
        y^2 \\
        0 \\
      \end{array}\right)
      \!+\!q_{11}\left(\begin{array}{c}
        0 \\
        xy \\
      \end{array}\right)
      \!+\!q_{02}\left(\begin{array}{c}
        0 \\
        y^2 \\
      \end{array}\right)
    \right] \\
    & =\!p_{20}ad_A^2\left(\!\begin{array}{c}
        x^2 \\
        0 \\
    \end{array}\!\right)\!+\!
    p_{11}ad_A^2\left(\!\begin{array}{c}
        xy \\
        0 \\
      \end{array}\!\right)
      \!+\!p_{02}ad_A^2\left(\!\begin{array}{c}
        y^2 \\
        0 \\
      \end{array}\!\right)
      \!+\!q_{11}ad_A^2\left(\!\begin{array}{c}
        0 \\
        xy \\
      \end{array}\!\right)
      \!+\!q_{02}ad_A^2\left(\!\begin{array}{c}
        0 \\
        y^2 \\
      \end{array}\!\right)\\
      &=\!\left(
          \begin{array}{c}
            c_{20} x^2 +c_{11}xy+c_{02} y^2\\
            d_{20} x^2 +d_{11}xy+d_{02} y^2
          \end{array}
        \right),
  \end{align*}
  where
  \begin{align*}
    c_{20} &:=\alpha p_{20}+\beta p_{11},~~
    c_{11}:=-2\beta p_{20}+\alpha p_{11}+2\beta p_{02}+\beta q_{11} ,~~
    c_{02}:=-\beta p_{11}+\alpha p_{02}+\beta q_{02},\\
    d_{20}&:=\beta q_{11}-\beta p_{20},~~
    d_{11}:=\alpha q_{11}+2\beta q_{02}-\beta p_{11},~~~~~~~~~~~~~~~
    d_{02}:=-\beta q_{11}+\alpha q_{02}-\beta p_{02}.
  \end{align*}
  From \eqref{ffe}, we get the following relation
  \begin{equation*}
    \tilde{a}_{20}-c_{20}=0,~\tilde{a}_{11}-c_{11}=\nu_2,~
    \tilde{a}_{02}-c_{02}=-\eta_2,~
    \tilde{b}_{20}-d_{20}=0,~\tilde{b}_{11}-d_{11}=\eta_2,~
    \tilde{b}_{02}-d_{02}=\nu_2,
  \end{equation*}
  which is an algebraic system in variables $p_{20},p_{11},p_{02},q_{11},q_{02},\nu_2,\eta_2$.
  Solving it, we obtain
  \begin{align*}
    p_{20}&=C,~~~~~
    p_{11}=\frac{\tilde{a}_{20}}{\beta}-\frac{\alpha }{\beta}C,~~~~~
    q_{11}=\frac{\tilde{b}_{20}}{\beta}+C,\\
    p_{02}&=\frac{3\beta^{2}\tilde{a}_{11}-
    (6\beta^{2}+\alpha^2)\tilde{b}_{20}-
    3\beta^{2}\tilde{b}_{02}+
    \alpha\beta \tilde{a}_{02}-
    \alpha\beta \tilde{a}_{20}+
    \alpha\beta\tilde{b}_{11}}{\beta(9\beta^{2}+\alpha^{2})},\\
    q_{02}&=\frac{3\beta^{2}\tilde{a}_{02}+
    (6\beta^{2}+\alpha^2)\tilde{a}_{20}+
    3\beta^{2}\tilde{b}_{11}-
    \alpha\beta\tilde{a}_{11}+
    \alpha\beta\tilde{b}_{02}-
    \alpha\beta\tilde{b}_{20}}{\beta(9\beta^{2}+\alpha^{2})}
    -\frac{\alpha }{\beta}C,\\
    \nu_2&=\frac{\beta(3\beta^{2}+\alpha^{2})
    (\tilde{a}_{11}+\tilde{b}_{20})\!+\!
    6\beta^{3}\tilde{b}_{02}\!-\!
    2\alpha\beta^{2}(\tilde{a}_{02}+\tilde{b}_{11})\!-\!
    \alpha(7\beta^{2}+\alpha^{2})\tilde{a}_{20}}
    {\beta(9\beta^{2}+\alpha^{2})}
    +\frac{\beta^2+\alpha^2 }{\beta}C,\\
    \eta_2&=\frac{\beta(3\beta^{2}+\alpha^{2})
    (\tilde{b}_{11}-\tilde{a}_{20})\!-\!
    6\beta^{3}\tilde{a}_{02}\!+\!
    2\alpha\beta^{2}(\tilde{a}_{11}-\tilde{b}_{02})\!-\!
    \alpha(7\beta^{2}+\alpha^{2})\tilde{b}_{20}}
    {\beta(9\beta^{2}+\alpha^{2})},
  \end{align*}
  where $C$ is an arbitrary constant.

  Assume that there exists a near-identity transformation
\begin{equation}
  \left(\begin{array}{c}
      x \\
      y \\
    \end{array}
  \right)\to\left(
   \begin{array}{c}
      x \\
      y \\
    \end{array}
  \right)+\sum_{k=2}^{m}
  \left(\begin{array}{c}
      \sum\limits_{j=0}^{k}p_{k-j,j}x^{k-j} y^{j} \\
      \sum\limits_{j=1}^{k}q_{k-j,j}x^{k-j} y^{j} \\
    \end{array}\right)
    \label{uhoffm}
\end{equation}
rewriting system~\eqref{fcg} as
\begin{equation*}
\left(\begin{array}{c}
      \dot{x} \\
      \dot{y} \\
    \end{array}\right)=A
  \left(\begin{array}{c}
      x \\
      y \\
    \end{array}\right)
    +\sum_{k=1}^{m-1}y^k
    \left(\begin{array}{cc}
      \nu_{k+1} & -\eta_{k+1} \\
      \eta_{k+1} & \nu_{k+1} \\
    \end{array}\right)\left(\begin{array}{c}
      x \\
      y \\
    \end{array}\right)
    +
    \left(\begin{array}{c}
      \sum\limits_{i+j= m+1} \hat{a}_{ij} x^iy^j \\
      \sum\limits_{i+j= m+1} \hat{b}_{ij} x^iy^j \\
    \end{array}\right)+h.o.t.
\end{equation*}
  in a neighbourhood of the origin.
  It is not hard to verify that $\Delta^{m+1}$ has a basic consisting
  of vectors $(x^{m-k+1}y^k,0)^\top$ for $0\le k\le m+1$ and vectors $(0, x^{m-k+1}y^k)^\top$ for $1\le k\le m+1$,
  and
  \begin{equation}\label{ffm1}
    ad_A^{m+1}
    \left(
      \begin{array}{c}
        x^{m-k+1}y^k \\
        0 \\
      \end{array}
    \right)
    =
    \left(
      \begin{array}{c}
       \Gamma_1^k\\
        -\Gamma_2^k\\
      \end{array}
    \right),~~~
     ad_A^{m+1}
    \left(
      \begin{array}{c}
        0 \\
        x^{m-k+1}y^k \\
      \end{array}
    \right)  =
    \left(
      \begin{array}{c}
      \Gamma_2^k \\
      \Gamma_1^k \\
      \end{array}
    \right),
  \end{equation}
  where
  $\Gamma_1^k:=k\beta x^{m-k+2}y^{k-1}\!+\!m\alpha x^{m-k+1}y^k\!-\!(m\!-\!k\!+\!1)\beta x^{m-k}y^{k+1},
  \Gamma_2^k:=\beta x^{m-k+1} y^k.$
  By the linearity of $ad_A^{m+1}$ and \eqref{ffm1}, we get
$$
  \begin{aligned}
&    ad_A^{m+1}\left[\sum_{k=0}^{m+1}p_{m-k+1,k}
    \left(
      \begin{array}{c}
        x^{m-k+1}y^k \\
        0 \\
      \end{array}
    \right)+\sum_{k=1}^{m+1}q_{m-k+1,k}
    \left(
      \begin{array}{c}
        0 \\
        x^{m-k+1}y^k \\
      \end{array}
    \right)\right]
    \\
    =&\sum_{k=0}^{m+1}p_{m-k+1,k}
    ad_A^{m+1}\left(
      \begin{array}{c}
        x^{m-k+1}y^k \\
        0 \\
      \end{array}
    \right)+\sum_{k=1}^{m+1}q_{m-k+1,k}
    ad_A^{m+1}\left(
      \begin{array}{c}
        0 \\
        x^{m-k+1}y^k \\
      \end{array}
    \right)\\
    =&\sum_{k=0}^{m+1}p_{m-k+1,k}
    \left(
      \begin{array}{c}
       \Gamma_1^k\\
        -\Gamma_2^k\\
      \end{array}
    \right)+\sum_{k=1}^{m+1}q_{m-k+1,k}
    \left(
      \begin{array}{c}
       \Gamma_2^k\\
        \Gamma_1^k\\
      \end{array}
    \right).
    \end{aligned}
$$
  Thus, the first component $I^1$ of the above vector satisfies
  $$I^1=\sum_{k=0}^{m+1}p_{m-k+1,k}\Gamma_1^k+\sum_{k=1}^{m+1}
  q_{m-k+1,k}\Gamma_2^k=\sum_{k=0}^{m+1}c_{m-k+1,k}x^{m-k+1}y^k,$$
  where
$c_{m+1,0}:=m\alpha p_{m+1,0}+\beta p_{m,1}$, $c_{m,1}:= 2\beta p_{m-1,2}+\left[m\alpha-(m+1)\beta \right] p_{m,1}+\beta q_{m,1}$,
$c_{0,m+1}:=m\alpha p_{0,m+1}+\beta q_{0,m+1} -\beta p_{1,m}$ and for $k=2,...,m$
$$ c_{m-k+1,k}:=(k\!+\!1)\beta p_{m-k,k+1}\!+\!
  m\alpha p_{m-k+1,k}+\beta q_{m-k+1,k}\!-\!(m\!-\!k\!+\!2)\beta p_{m-k+2,k-1}.$$
Similarly, we compute the second component $I^2$ and get
$$I^2:= \sum_{k=0}^{m+1}d_{m-k+1,k}x^{m-k+1}y^k,$$
  where $d_{m+1,0}:=-\beta q_{m+1,0}+\beta q_{m,1}$, $ d_{m,1}:= 2\beta q_{m-1,2}+m\alpha q_{m,1}-\beta p_{m,1}$,
  $d_{0,m+1}:=m\alpha q_{0,m+1}-\beta p_{0,m+1} -\beta q_{1,m}$ and for $k=2,...,m$
  $$d_{m-k+1,k}:=(k\!+\!1)\beta q_{m-k,k+1}\!+\!
  m\alpha q_{m-k+1,k}\!-\!\beta p_{m-k+1,k}\!-\!(m\!-\!k\!+\!2)\beta q_{m-k+2,k-1}.$$

  According to \eqref{ffe}, we need to solve algebraic system
  \begin{equation}\label{ffeqg}
  \left\{\begin{aligned}
   &eq_1:&~&\hat{a}_{m-k+1,k}-c_{m-k+1,k}=0,~~k=0,1,...m-1,\\
   &eq_2:&~&\hat{b}_{m-k+1,k}-d_{m-k+1,k}=0,~~k=0,1,...m-1,\\
   &eq_3:&~&\hat{a}_{1,m}-c_{1,m}=\nu_{m+1},\\
   &eq_4:&~&\hat{b}_{1,m}-d_{1,m} =\eta_{m+1},\\
   &eq_5:&~&\hat{a}_{0,m+1}-c_{0,m+1} =-\eta_{m+1},\\
   &eq_6:&~&\hat{b}_{0,m+1}-d_{0,m+1}=\nu_{m+1}\\
   \end{aligned}\right.
  \end{equation}
 in variables $\nu_{m+1},\eta_{m+1},p_{m-k+1,k},q_{m-k+1,k},k=1,...,m+1$.
  We replace $eq_3,eq_4$ in \eqref{ffeqg} by $eq'_3:=eq_3-eq_6$, $eq'_4:=eq_4+eq_5$ and
  get an equivalent algebraic system $\{eq_1,eq_2,eq'_3,eq'_4,eq_5,eq_6\}$.
  Solving algebraic system $\{eq_1,eq_2\}$, we get
  \begin{equation}\label{pqm1}
    \begin{aligned}
    p_{m+1,0}&=C,~~~~~~~~
    p_{m,1}=\frac{\hat{a}_{m+1,0}}{\beta}-\frac{m\alpha}{ \beta}C,~~~~~~~~~
    q_{m,1}=\frac{\hat{b}_{m+1,0}}{\beta}+C,\\
     p_{m-k,k+1}&=\frac{\hat{a}_{m-k+1,k}+(m\!-\!k\!+\!2)\beta p_{m-k+2,k-1}-m\alpha p_{m-k+1,k}-\beta q_{m-k+1,k}}{(k+1)\beta},\\
    q_{m-k,k+1}&=\frac{\hat{b}_{m-k+1,k}+{\rm sgn}(k\!-\!1)(m\!-\!k\!+\!2)\beta q_{m-k+2,k-1}-m\alpha q_{m-k+1,k}+\beta p_{m-k+1,k}}{(k+1)\beta},
    \end{aligned}
  \end{equation}
  where $k=1,...,m-1$ and $C$ is an arbitrary constant.
  Substituting \eqref{pqm1} in algebraic system $\{eq'_3,eq'_4\}$, we obtain
$$
    (m+2)\beta p_{0,m+1}-m\alpha q_{0,m+1}=\Upsilon_1, ~~~~
    m\alpha p_{0,m+1}+(m+2)\beta q_{0,m+1}=\Upsilon_2,
$$
  where $\Upsilon_1:=\hat{a}_{1,m}-\hat{b}_{0,m+1}+2\beta \left(p_{2,m-1}-q_{1,m}\right)-m\alpha p_{1,m},
  \Upsilon_2:=\hat{a}_{0,m+1}+\hat{b}_{1,m}+2\beta \left(p_{1,m}+q_{2,m-1}\right)-m\alpha q_{1,m}.$
  Then,
  \begin{equation}\label{pqm}
    p_{0,m+1}=\frac{(m+2)\beta \Upsilon_1+m\alpha \Upsilon_2}{m^2\alpha^2+(m+2)^2\beta^2},~~~~
    q_{0,m+1}=\frac{(m+2)\beta \Upsilon_2-m\alpha \Upsilon_1}{m^2\alpha^2+(m+2)^2\beta^2}.
  \end{equation}
Finally,
substituting
\eqref{pqm1} and \eqref{pqm} in
the algebraic system $\{eq_5,eq_6\}$, we obtain
$$
  \begin{aligned}
    \nu_{m+1} & = -m\alpha q_{0,m+1}+\beta p_{0,m+1} +\beta q_{1,m}+\hat{b}_{0,m+1},\\
    \eta_{m+1} & =m\alpha p_{0,m+1}+\beta q_{0,m+1} -\beta p_{1,m}-\hat{a}_{0,m+1},
  \end{aligned}
$$
which shows that the algebraic system $\{eq_1,eq_2,eq_3,eq_4,eq_5,eq_6\}$ is solvable.
The solvability shows from \eqref{ffe} that
we can find a transformation
\begin{equation}\label{nim1}
  \left(\begin{array}{c}
      x \\
      y \\
    \end{array}
  \right)\to
   \left(
   \begin{array}{c}
      x \\
      y \\
    \end{array}
  \right)+\sum_{k=2}^{m+1}
  \left(\begin{array}{c}
      \sum\limits_{j=0}^{k}p_{k-j,j}x^{k-j} y^{j} \\
      \sum\limits_{j=1}^{k}q_{k-j,j}x^{k-j} y^{j} \\
    \end{array}\right)+o\left(|(x,y)|^{m+1}\right)
\end{equation}
to change system~\eqref{fcg} into the form
\begin{equation}\label{nfm1}
  \left(\begin{array}{c}
      \dot{x} \\
      \dot{y} \\
    \end{array}\right)
    =\left(\begin{array}{cc}
      \alpha & -\beta \\
      \beta & \alpha \\
    \end{array}\right)
  \left(\begin{array}{c}
      x \\
      y \\
    \end{array}\right)
    +\sum_{k=1}^{m}y^k
    \left(\begin{array}{cc}
      \nu_{k+1} & -\eta_{k+1} \\
      \eta_{k+1} & \nu_{k+1} \\
    \end{array}\right)\left(\begin{array}{c}
      x \\
      y \\
    \end{array}\right)
    +o\left(|(x,y)|^{m+1}\right)
\end{equation}
in a neighbourhood of $O$.
Note that
the terms in $o\left(|(x,y)|^{m+1}\right)$ in \eqref{nim1} do not
influence the terms of degree no more than $m+1$ in \eqref{nfm1}.
It means that one can get \eqref{nfm1} from \eqref{fcg} by the $(m+1)$-th order truncation of \eqref{nim1}.
Therefore, our claim below system~\eqref{fcg} is proved by the mathematical induction.

Combining the linear transformation~\eqref{tl} with the near-identity transformation~\eqref{nifs},
we find a diffeomorphism~\eqref{dhfs} with $r_{k,0}=p_{k,0}$
$r_{k-j,j}=p_{k-j,j}+q_2q_{k-j,j}$ and $s_{k-j,j}=q_1q_{k-j,j}$, $j=1,...,k$,
and \eqref{dhfs} transforms the upper subsystem of \eqref{ge} into system~\eqref{nfN}.
This completes the proof in Step 1.

{\it Step 2: transform \eqref{nfN} into \eqref{nffs}.}
The time scaling \eqref{tsfs} transforms system~\eqref{nfN} as
\begin{equation*}
  \left(\begin{array}{c}
      \dot{x} \\
      \dot{y} \\
    \end{array}\right)
    =\left(\begin{array}{cc}
      \gamma_1 & -1 \\
      1 & \gamma_1 \\
    \end{array}\right)
  \left(\begin{array}{c}
      x \\
      y \\
    \end{array}\right)
    +\sum_{k=1}^{N}y^k
    \left(\begin{array}{cc}
      \gamma_{k+1} & -\tilde{\eta}_{k+1} \\
      \tilde{\eta}_{k+1} &  \gamma_{k+1} \\
    \end{array}\right)\left(\begin{array}{c}
      x \\
      y \\
    \end{array}\right)
    +h.o.t.,
\end{equation*}
 where $\gamma_1:=\alpha/\beta$, $ \gamma_2:=(\nu_2-\alpha T_1)/\beta$, $\tilde{\eta}_2:=\eta_2/\beta-T_1$ and
 \begin{equation*}
    \gamma_{k+1}:=\frac{1}{\beta}\left(\nu_{k+1}-
    \sum\limits_{i=1}^{k-1}T_{k-i}\nu_{i+1}-\alpha T_k\right),~
   \tilde{\eta}_{k+1}:=\frac{1}{\beta}\left(\eta_{k+1} -\sum\limits_{i=1}^{k-1}T_{k-i}\eta_{i+1}\right)-T_k, ~k=2,...,N.
 \end{equation*}
Solving
the algebraic system $\left\{\tilde{\eta}_{2}=0,...,\tilde{\eta}_{N+1}=0\right\}$, we recursively obtain
$$T_1=\frac{\eta_2}{\beta},~~~~T_k=\frac{1}{\beta}\left(\eta_{k+1} -\sum_{i=1}^{k-1}T_{k-i}\eta_{i+1}\right), k=2,...,N.$$
 Thus \eqref{tsfs} transforms \eqref{nfN} into \eqref{nffs}.
This completes the proof in Step 2 and hence this lemma is proved.
\end{proof}

In order to simplify the upper subsystem of \eqref{ge} with an invisible tangent point $O$, we expand $\Phi^+(x,y)$
and $\Psi^+(x,y)$ in  \eqref{geni} as sums of quasi-homogeneous polynomials as in \cite{EM} and obtain following lemma.

\begin{lm}\label{nfops}
Suppose that the upper subsystem of \eqref{ge} has an invisible tangent point $O$ with multiplicity $2\ell-1$.
Then by a diffeomorphism
\begin{equation}\label{dfps}
  \left(\begin{array}{c}
      x \\
      y \\
    \end{array}
  \right)\to
  \left(
    \begin{array}{cc}
      1 & 0 \\
      0 & q_1 \\
    \end{array}
  \right)
  \left(
   \begin{array}{c}
      x \\
      y \\
    \end{array}
  \right)
  +\sum_{k=1}^{N}
  \left(
    \begin{array}{c}
     \sum\limits_{i=0}^{\lfloor\frac{k+1}{2\ell}\rfloor}r_{k+1-2\ell i,i}x^{k+1-2\ell i}y^i\\
      \sum\limits_{i=1}^{1+\lfloor\frac{k}{2\ell}\rfloor}s_{k+2\ell-2\ell i,i}x^{k+2\ell-2\ell i}y^i \\
    \end{array}
  \right)
\end{equation}
in a small neighborhood of $O$, the upper subsystem of \eqref{ge} is transformed equivalently into
\begin{equation}\label{npN}
  \left(\begin{array}{c}
      \dot{x} \\
      \dot{y} \\
    \end{array}
  \right)=
  \left(\begin{array}{c}
    a_0-\sum\limits_{k=1}^{N}\mu_{k+1} x^k \\
    -a_0x^{2\ell-1}+\sum\limits_{k=1}^{N}\mu_{k+1} x^{k+2\ell-1} \\
  \end{array}\right)+{\boldsymbol R}_{N}(x,y),
\end{equation}
where $a_0:=X^+(0,0)$ and $\lfloor s\rfloor$ means the maximum integer no greater than $s$. Moreover, there exists a time scaling
\begin{equation}\label{tsps}
  dt\to  \frac{1}{-a_0}\left(1-\sum_{k=1}^{N}T_kx^k\right)dt
\end{equation}
changing system~\eqref{npN} into
\begin{equation}\label{nfps}
  \left(\begin{array}{c}
      \dot{x} \\
      \dot{y} \\
    \end{array}
  \right)=
  \left(\begin{array}{c}
    -1 \\
    x^{2\ell-1} \\
  \end{array}\right)+\tilde{\boldsymbol R}_{N}(x,y),
\end{equation}
where each $T_k$ is a polynomial in $\mu_2,...,\mu_{k+1}$.
Here both ${\boldsymbol R}_{N}(x,y)$ and $\tilde{\boldsymbol R}_{N}(x,y)$ are sums of quasi-homogeneous polynomial vector fields
of type $(1,2\ell)$ and degree no less than $N$.
\end{lm}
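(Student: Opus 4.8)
The plan is to follow the two‑step pattern of the proof of Lemma~\ref{niofs}, with the ordinary homogeneous grading replaced everywhere by the quasi‑homogeneous grading of type $(1,2\ell)$ fitted to the tangency, and with the switching‑line restriction of Lemma~\ref{niih} built into the admissible transformations. Write the upper subsystem of \eqref{ge} as $\dot{\boldsymbol z}=\boldsymbol X(\boldsymbol z)$, $\boldsymbol z=(x,y)^\top$, and expand $\boldsymbol X=\sum_{k\ge-1}\boldsymbol X_k$ with $\boldsymbol X_k$ quasi‑homogeneous of type $(1,2\ell)$ and degree $k$ (there is nothing below degree $-1$, since a negative weighted degree is impossible for a power series). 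Because $O$ is a tangent point, $Y^+(0,0)=0$ while $a_0:=X^+(0,0)\ne0$, and because the tangency is invisible of multiplicity $2\ell-1$, $Y^+(x,0)$ vanishes at $x=0$ to order exactly $2\ell-1$; since the only quasi‑degree‑$(-1)$ monomials are the constant in the first component and $x^{2\ell-1}$ in the second, this forces $\boldsymbol X_{-1}=(a_0,\,c\,x^{2\ell-1})^\top$ with $c\ne0$. I would first apply the linear scaling $(x,y)\mapsto(x,q_1y)$ with $q_1:=-c/a_0$ — the linear part of \eqref{dfps}, playing here the role that \cite[Theorem~2.2]{CX} plays in Lemma~\ref{niofs} — which turns $\boldsymbol X_{-1}$ into $(a_0,-a_0x^{2\ell-1})^\top$ and alters the terms of degree $\ge0$ only in their coefficients.

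Next I would run the quasi‑homogeneous normalization inductively on the degree $m\ge0$. Suppose the system has already been brought, by a near‑identity transformation, to the form $\dot{\boldsymbol z}=\boldsymbol X_{-1}+\sum_{j=0}^{m-1}\mu_{j+2}(-x^{j+1},x^{j+2\ell})^\top+\boldsymbol F_m+\boldsymbol F'$, where $\boldsymbol F_m$ is quasi‑homogeneous of type $(1,2\ell)$ and degree $m$ and $\boldsymbol F'$ gathers the terms of degree $\ge m+1$. A change of variables $\boldsymbol z\mapsto\boldsymbol z+\boldsymbol P$ with $\boldsymbol P$ quasi‑homogeneous of type $(1,2\ell)$ and degree $m+1$ is near‑identity, hence fixes all degrees $<m$, and modifies the degree‑$m$ part by the Lie bracket $\mathcal L_{m+1}\boldsymbol P:=D\boldsymbol P\,\boldsymbol X_{-1}-D\boldsymbol X_{-1}\,\boldsymbol P$, which maps degree $m+1$ to degree $m$. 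As in Lemma~\ref{niih}, to keep the switching line $y=0$ fixed $\boldsymbol P$ must lie in the subspace $\widetilde{\mathcal Q}_{m+1}\subset\mathcal Q_{m+1}$, where $\mathcal Q_k$ denotes the space of quasi‑homogeneous fields of type $(1,2\ell)$ and degree $k$ and $\widetilde{\mathcal Q}_k$ consists of those whose second component vanishes on $y=0$ — this is exactly why the $s$‑sum in \eqref{dfps} starts at $i=1$. The inductive step thus reduces to solving, for $\boldsymbol P\in\widetilde{\mathcal Q}_{m+1}$ and $\mu_{m+2}\in\mathbb R$, the homological equation $\boldsymbol F_m=\mathcal L_{m+1}\boldsymbol P+\mu_{m+2}(-x^{m+1},x^{m+2\ell})^\top$.

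To solve it I would argue, first, that $\mathcal L_{m+1}|_{\widetilde{\mathcal Q}_{m+1}}$ is injective: an element of its kernel is a quasi‑homogeneous infinitesimal symmetry of $\boldsymbol X_{-1}$, and integrating the characteristic equations (using the first integral $H:=a_0y+\tfrac{a_0}{2\ell}x^{2\ell}$ of $\boldsymbol X_{-1}$) shows every such symmetry has the form $\phi(H)\boldsymbol X_{-1}/a_0+\psi(H)(0,1)^\top$; the requirement that its second component vanish on $y=0$ forces $\phi=\psi=0$, because $-x^{2\ell-1}\phi(\tfrac{a_0}{2\ell}x^{2\ell})$ and $\psi(\tfrac{a_0}{2\ell}x^{2\ell})$ involve disjoint residue classes of powers of $x$ modulo $2\ell$. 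Second, a direct count of monomials (the ones displayed in \eqref{dfps} and \eqref{npN}) gives $\dim\mathcal Q_m-\dim\widetilde{\mathcal Q}_{m+1}\in\{0,1\}$, with the value determined by $m\bmod2\ell$. Hence ${\rm Im}\,\mathcal L_{m+1}$ has codimension at most one in $\mathcal Q_m$, and I would check that in the codimension‑one case $(-x^{m+1},x^{m+2\ell})^\top$ is not in the image, so that it spans a complement (in the codimension‑zero case one simply takes $\mu_{m+2}=0$); in either case the homological equation has a solution, with $\boldsymbol P$ unique by injectivity. Carrying this out for $m=0,\dots,N-1$, composing the near‑identity transformations with the initial scaling, and discarding the tails of degree $>N$ (which, as in the remark after \eqref{nim1}, do not touch any term of degree $<N$) produces a diffeomorphism of the form \eqref{dfps} carrying the upper subsystem into \eqref{npN}, with $\boldsymbol R_N$ the degree‑$\ge N$ remainder.

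For the time scaling \eqref{tsps} I would exploit the structural identity that the non‑remainder part of \eqref{npN} equals $\bigl(a_0-\sum_{k=1}^N\mu_{k+1}x^k\bigr)(1,-x^{2\ell-1})^\top$, a scalar function of $x$ times the principal field. Multiplying \eqref{npN} by $\tfrac1{-a_0}\bigl(1-\sum_{k=1}^NT_kx^k\bigr)$ and choosing the $T_k$ recursively so that $\bigl(1-\sum_{k=1}^NT_kx^k\bigr)\bigl(1-a_0^{-1}\sum_{k=1}^N\mu_{k+1}x^k\bigr)=1$ modulo $x^{N+1}$ — which is possible with each $T_k$ a polynomial in $\mu_2,\dots,\mu_{k+1}$ — turns that part into $(-1,x^{2\ell-1})^\top$ up to terms of degree $\ge N$, while $\boldsymbol R_N$ stays of degree $\ge N$ after multiplication by the scalar (whose leading part is the constant $\tfrac1{-a_0}$); collecting all degree‑$\ge N$ terms into $\tilde{\boldsymbol R}_N$ gives \eqref{nfps}. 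I expect the main work to be the analysis of $\mathcal L_{m+1}$ in the inductive step — proving injectivity on $\widetilde{\mathcal Q}_{m+1}$ via the symmetry computation and getting the codimension count uniformly in $\ell$ and in $m\bmod2\ell$ — since, unlike the homogeneous situation of Lemma~\ref{niofs}, the dimensions of the relevant monomial spaces vary irregularly with $m$; everything else is bookkeeping parallel to Lemma~\ref{niofs}, as signalled in the statement by the reference to \cite{EM}.
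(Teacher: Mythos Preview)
Your overall strategy is sound and parallels the paper's, but the way you solve the homological equation is genuinely different. The paper does not argue via injectivity and a dimension count: it writes $ad_p^m$ explicitly on the monomial basis of $\tilde\Delta^{m+1}$, obtains a triangular linear system in the coefficients $\alpha_i,\beta_i,\mu_{m+2}$, and solves it recursively, splitting into three subcases according to whether $2\ell\mid(m+2)$, $2\ell\mid(m+1)$, or neither. Your route --- kernel via the symmetry description $\phi(H)\boldsymbol X_{-1}/a_0+\psi(H)(0,1)^\top$ plus the residue‑class observation, followed by a dimension comparison --- is cleaner conceptually and immediately explains \emph{why} the case split in the paper occurs (it is exactly the codimension‑$0$/codimension‑$1$ dichotomy you identify). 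It also yields the extra information that in the codimension‑$0$ degrees one may take $\mu_{m+2}=0$; the paper instead fixes a free coefficient (e.g.\ $\alpha_q=0$) and gets a specific, generally nonzero $\mu_{m+2}$, which is an equally valid choice since the lemma does not claim uniqueness of the $\mu$'s.

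The one place where your plan is thinner than the paper's proof is the complement check: showing that $(-x^{m+1},x^{m+2\ell})^\top\notin\operatorname{Im}\mathcal L_{m+1}$ in the codimension‑one case. You flag it as ``I would check'', but there is no obvious dual functional or integral identity that does this for free; in practice one ends up computing $\mathcal L_{m+1}$ on the monomial basis and observing that the resulting system, augmented by the single target vector, is solvable --- which is precisely the paper's computation. So your approach trades the paper's somewhat tedious but fully explicit triangular solve for a short injectivity argument plus a residual explicit check of comparable size. The time‑scaling step (your last paragraph) matches the paper's Step~2 essentially verbatim.
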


\begin{proof}
This proof consists of two steps.

{\it Step 1: transform the upper subsystem of \eqref{ge} into \eqref{npN}.}
Since the upper subsystem of \eqref{ge} has an invisible tangent point $O$ with multiplicity $2\ell-1$, we get
\begin{equation*}
  \frac{\partial Y^+(0,0)}{\partial x}=\cdots=\frac{\partial^{2\ell-2}Y^+(0,0)}{\partial x^{2\ell-2}}=0, ~~~~~~
  a_0\frac{\partial^{2\ell-1}Y^+(0,0)}{\partial x^{2\ell-1}}<0.
\end{equation*}
From the analyticity of functions $X^+(x,y)$ and $Y^+(x,y)$ at $O$, the upper subsystem of \eqref{ge} can be expanded as
\begin{equation}\label{Eop}
  \left(
    \begin{array}{c}
      \dot{x} \\
      \dot{y} \\
    \end{array}
  \right)=
  \left(
    \begin{array}{c}
      a_0+\sum\limits_{i+j\ge1}a_{ij}x^iy^j \\
      b_{2\ell-1,0}x^{2\ell-1}+\sum\limits_{i+j\le 2\ell-2}b_{i,j+1}x^iy^{j+1}+
      \sum\limits_{i+j\ge2\ell}b_{ij}x^iy^j \\
    \end{array}
  \right).
\end{equation}
Clearly, there exists a linear transformation \eqref{tl}
with $q_2:=0$ and $q_1:=-b_{2\ell-1,0}/{a_0}>0$
such that system~\eqref{Eop} is rewritten as
\begin{equation}\label{pcg}
    \left(
    \begin{array}{c}
      \dot{x} \\
      \dot{y} \\
    \end{array}
  \right)=
  \left(
    \begin{array}{c}
      a_0+\sum\limits_{i+j\ge1}\tilde{a}_{ij}x^iy^j \\
      -a_0 x^{2\ell-1}+\sum\limits_{i+j\le 2\ell-2}\tilde{b}_{i,j+1}x^iy^{j+1}
      +\sum\limits_{i+j\ge2\ell}\tilde{b}_{ij}x^iy^j \\
    \end{array}
  \right),
\end{equation}
where $\tilde{a}_{ij}:=a_{ij}(-b_{2\ell-1,0})^j/a^j_0$ and $\tilde{b}_{ij}:=
b_{ij}(-b_{2\ell-1,0})^{j-1}/a^{j-1}_0$.

We claim that there exists a near-identity transformation
\begin{equation}\label{nips}
  \left(\begin{array}{c}
      x \\
      y \\
    \end{array}
  \right)\to
  \left(
   \begin{array}{c}
      x \\
      y \\
    \end{array}
  \right)+\sum_{k=1}^{N}
  \left(
    \begin{array}{c}
     \sum\limits_{i=0}^{\lfloor\frac{k+1}{2\ell}\rfloor}p_{k+1-2\ell i,i}x^{k+1-2\ell i}y^i\\
      \sum\limits_{i=1}^{1+\lfloor\frac{k}{2\ell}\rfloor}q_{k+2\ell-2\ell i,i}x^{k+2\ell-2\ell i}y^i \\
    \end{array}
  \right)
\end{equation}
changing system~\eqref{pcg} into system~\eqref{npN}.
In fact, let $\tilde{\Delta}^k:=\left\{{\boldsymbol h}^k(x,y)\in \mathscr{P}_2^k\mid h_2^k(x,0)=0 \right\}$, where $\mathscr{P}_2^k$ denotes the linear space of all planar vector-valued quasi-homogeneous polynomials of type $(1,2\ell)$ and degree $k$,
and $h_2^k(x,y)$ is the second component of ${\boldsymbol h}^k(x,y)\in \mathscr{P}_2^k$.
From \cite{EM1},  the right-hand side of system~\eqref{pcg} can be written as $\sum_{k\ge-1}{\boldsymbol F}_k(x,y)$,
where
\begin{equation*}
 {\boldsymbol F}_{-1}(x,y):=
 \left(
   \begin{array}{c}
     a_0 \\
     -a_0x^{2\ell-1}\\
   \end{array}
 \right),~~~
  {\boldsymbol F}_k(x,y):=
  \left(
    \begin{array}{c}
      \sum\limits_{i=0}^{\lfloor\frac{k+1}{2\ell}\rfloor}\tilde{a}_{k+1-2\ell i,i}x^{k+1-2\ell i}y^i\\
      \sum\limits_{i=0}^{1+\lfloor\frac{k}{2\ell}\rfloor}\tilde{b}_{k+2\ell-2\ell i,i}x^{k+2\ell-2\ell i}y^i \\
    \end{array}
  \right),~~k\ge 0.
\end{equation*}
Further, it can be rewritten as
\begin{equation*}
  \sum_{i=-1}^{k-1}{\boldsymbol F}_{i}(x,y)+\left[{\boldsymbol F}_k(x,y)-D{\boldsymbol g}^{k+1}{\boldsymbol F}_{-1}(x,y)+D{\boldsymbol F}_{-1}{\boldsymbol g}^{k+1}(x,y)\right]+h.o.t.
\end{equation*}
by a near-identity transformation
\begin{equation*}
  \left(
    \begin{array}{c}
      x \\
      y \\
    \end{array}
  \right)\to
  \left(
    \begin{array}{c}
      x \\
      y \\
    \end{array}
  \right)+{\boldsymbol g}^{k+1}(x,y),~~~~{\boldsymbol g}^{k+1}(x,y)\in \tilde{\Delta}^{k+1}.
\end{equation*}
Similarly to the proof of Lemma~\ref{niofs}, we define a linear operator $ad_p^k:\tilde{\Delta}^{k+1}\to \mathscr{P}_2^k$ by
\begin{equation*}
  ad_p^k{\boldsymbol g}^{k+1}(x,y):=D{\boldsymbol g}^{k+1}{\boldsymbol F}_{-1}(x,y)-D{\boldsymbol F}_{-1}{\boldsymbol g}^{k+1}(x,y).
\end{equation*}
From decomposition
$
  \mathscr{P}_2^k={\rm Im}~ad_p^k \oplus \Theta^k,
$
where $\Theta^k$ denotes a complementary subspace to ${\rm Im}~ ad_p^k$,
we need only to find ${\boldsymbol g}^{k+1}(x,y) \in \tilde{\Delta}^{k+1}$ and $\mu_{k+2}$ satisfying
\begin{equation}\label{ppe}
  {\boldsymbol F}_k(x,y)-ad_p^k{\boldsymbol g}^{k+1}(x,y)=
  \left(
    \begin{array}{c}
      -\mu_{k+2} x^{k+1} \\
      \mu_{k+2} x^{k+2\ell} \\
    \end{array}
  \right)\in \Theta^k,~~k=0,...,N-1.
\end{equation}

Clearly, $\tilde{\Delta}^1=span\{(x^2,0)^\top, (0,xy)^\top, \omega(y,0)^\top\}$, where $\omega:=\lfloor1/\ell \rfloor$.
It is not hard to compute
\begin{equation*}
  ad_p^0
  \left(\!
    \begin{array}{c}
      x^2 \\
      0 \\
    \end{array}
  \!\right)\!=\!
  \left(\!
    \begin{array}{c}
      2a_0x \\
      (2\ell-1)a_0x^{2\ell} \\
    \end{array}
  \!\right),~
  ad_p^0
  \left(\!
    \begin{array}{c}
      y \\
      0 \\
    \end{array}
  \!\right)\!=\!
  \left(\!
    \begin{array}{c}
      -a_0x \\
      a_0y \\
    \end{array}
  \!\right),~
  ad_p^0
  \left(\!
    \begin{array}{c}
      0 \\
      xy \\
    \end{array}
  \!\right)\!=\!
  \left(\!
    \begin{array}{c}
      0 \\
      -a_0x^{2\ell}+a_0y \\
    \end{array}
  \!\right).
\end{equation*}
According to the linearity of $ad_p^0$, we get
\begin{align*}
  ad_p^0{\boldsymbol g}^1&=p_{20}ad_p^0
  \left(
    \begin{array}{c}
       x^2\\
       0\\
    \end{array}
  \right)+\omega p_{01} ad_p^0
  \left(
    \begin{array}{c}
       y\\
       0\\
    \end{array}
  \right)
  +q_{11}ad_p^0
  \left(
    \begin{array}{c}
      0 \\
      xy \\
    \end{array}
  \right)\\
  &=\left(
    \begin{array}{c}
      a_0(2p_{20}-\omega p_{01})x \\
      \left[(2\ell-1)a_0p_{20}-a_0q_{11}\right]x^{2\ell}+a_0(q_{11}+\omega p_{01})y \\
    \end{array}
  \right).
\end{align*}
By \eqref{ppe}, we get an algebraic system
\begin{equation*}
  \tilde{a}_{10}-2a_0p_{20}+\omega a_0p_{01}=-\mu_2,~~
  \tilde{b}_{2\ell,0}- (2\ell-1)a_0p_{20}+a_0q_{11}=\mu_2,~~
  \tilde{b}_{01}-a_0q_{11}-\omega a_0 p_{01}=0
\end{equation*}
in variables $p_{01},q_{11},p_{20},\mu_2$, which has a solution
\begin{equation*}
  p_{01}=0,~~~~
  q_{11}=\frac{\tilde{b}_{01}}{a_0},~~~~
  p_{20}=\frac{\tilde{a}_{10}+\tilde{b}_{2\ell,0}+\tilde{b}_{01}}{a_0(2\ell+1)},~~~~
  \mu_2=\frac{2\tilde{b}_{2\ell,0}+2\tilde{b}_{01}-(2\ell-1)\tilde{a}_{10}}
  {a_0(2\ell+1)}.
\end{equation*}

Assume that there exists a near-identity transformation \eqref{nips} with $N=m$
changing system~\eqref{pcg} into
\begin{equation*}
  \left(\begin{array}{c}
      \dot{x} \\
      \dot{y} \\
    \end{array}
  \right)=
  \left(\begin{array}{c}
    a_0-\sum\limits_{k=1}^{m}\mu_{k+1} x^k \\
    -a_0x^{2\ell-1}+\sum\limits_{k=1}^{m}\mu_{k+1} x^{k+2\ell-1} \\
  \end{array}\right)+\left(
    \begin{array}{c}
      \sum\limits_{i=0}^{\lfloor\frac{m+1}{2\ell}\rfloor}\hat{a}_{m+1-2\ell i,i}x^{m+1-2\ell i}y^i\\
      \sum\limits_{i=0}^{1+\lfloor\frac{m}{2\ell}\rfloor}\hat{b}_{m+2\ell-2\ell i,i}x^{m+2\ell-2\ell i}y^i \\
    \end{array}
  \right)+h.o.t..
\end{equation*}
Note that $\tilde{\Delta}^{m+1}$ has a basic consisting of vectors
$(x^{m+2-2\ell i}y^i,0)^\top$ for $0\le i\le q$ and vectors $(0,x^{m+2\ell+1-2\ell i}y^i)$ for $1\le i \le 1+r$, where $r:=\lfloor(m+1)/(2\ell)\rfloor,q:=\lfloor(m+2)/(2\ell)\rfloor$.
It is not hard to obtain
\begin{equation*}
  ad_p^m
  \left(
    \begin{array}{c}
      x^{m+2-2\ell i}y^i \\
      0 \\
    \end{array}
  \right)=
  \left(
    \begin{array}{c}
      (m+2-2\ell i)a_0x^{m+1-2\ell i}y^i-ia_0x^{m+1+2\ell-2\ell i}y^{i-1}\\
      (2\ell-1)a_0x^{m+2\ell-2\ell i}y^i \\
    \end{array}
  \right)
\end{equation*}
and
\begin{equation*}
  ad_p^m
  \left(
    \begin{array}{c}
      0 \\
      x^{m+2\ell+1-2\ell i}y^i \\
    \end{array}
  \right)=
  \left(
    \begin{array}{c}
      0\\
      (m+1+2\ell-2\ell i)a_0x^{m+2\ell-2\ell i}y^i-ia_0x^{m+4\ell-2\ell i}y^{i-1} \\
    \end{array}
  \right).
\end{equation*}
Associate with linearity of $ad_p^m$, we get
\begin{equation*}
  ad_p^m{\boldsymbol g}^{m+1}(x,y)=\sum_{i=0}^{q}\alpha_i ad_p^m
  \left(
    \begin{array}{c}
      x^{m+2-2\ell i}y^i \\
      0 \\
    \end{array}
  \right)+
  \sum_{i=1}^{r+1}\beta_iad_p^m
   \left(
    \begin{array}{c}
      0 \\
      x^{m+2\ell+1-2\ell i}y^i \\
    \end{array}
  \right)=
  \left(
    \begin{array}{c}
      \Gamma_1 \\
      \Gamma_2 \\
    \end{array}
  \right)
\end{equation*}
where
\begin{align*}
  \Gamma_1:=&\sum_{i=0}^{q}{\rm sgn}(q\!-\!i)\!\left[(m\!+\!2\!-\!2\ell i)\alpha_i\!-\!(i\!+\!1)\alpha_{i+1}\right]a_0 x^{m+1-2\ell i}y^i\!+\!(m\!+\!2\!-\!2\ell q)\alpha_{q}a_0x^{m+1-2\ell q}y^q,\\
  \Gamma_2:=&\left[(2\ell\!-\!1)\alpha_0\!-\!\beta_1\right]a_0x^{m+2\ell}\!+\!
  \sum_{i=0}^{r}{\rm sgn}(i)\left[(m\!+\!1\!+\!2\ell\!-\!2\ell i)\beta_i\!-\!(i\!+\!1)\beta_{i+1}\right]a_0x^{m+2\ell-2\ell i}y^i\\
    &+\sum_{i=0}^{q}{\rm sgn}(i)(2\ell\!-\!1)\alpha_ia_0x^{m+2\ell-2\ell i}y^i\!+\!(m\!+\!1\!-\!2\ell r)\beta_{r+1}a_0x^{m-2\ell r}y^{r+1}
\end{align*}
and ${\rm sgn}(x)$
is sign function.
We consider following two cases.

Case (I): $r=0$.
By \eqref{ppe}, we get an algebraic system $\{{\rm sgn}(q)(2\ell-1)a_0\alpha_1+(m+1)a_0\beta_1=\hat{b}_{m,1},
  \hat{a}_{m+1,0}-(m+2)a_0\alpha_0+{\rm sgn}(q)a_0\alpha_1=-\mu_{m+2},
  \hat{b}_{m+2\ell,0}-(2\ell-1)a_0\alpha_0+a_0\beta_1
  =\mu_{m+2}\}$
in variables $\alpha_0, \alpha_1, \beta_1, \mu_{m+2}$, which has a solution
\begin{align*}
  \alpha_0&=\frac{(m+1)(\hat{a}_{m+1,0}+\hat{b}_{m+2\ell,0})+\hat{b}_{m,1}}
  {a_0(m+1)(m+1+2\ell)},~~~~\alpha_1=0,~~~~
  \beta_1=\frac{\hat{b}_{m,1}}{a_0(m+1)},\\
  \mu_{m+2}&=\frac{(1-2\ell)(m+1)\hat{a}_{m+1,0}+(m+1)(m+2)\hat{b}_{m+2\ell,0}+
  (m+2)\hat{b}_{m,1}}{(m+1)(m+1+2\ell)}.
\end{align*}

Case (II): $r\ge1$.
For the first subcase: $2\ell \mid (m+2)$.
Then we get $q=r+1$ and $r=\lfloor m/(2\ell)\rfloor$.
By \eqref{ppe}, we get an algebraic system $EQ_1$ consisting of $(2\ell-1)a_0\alpha_q+(m+1-2\ell r)a_0\beta_{r+1}=\hat{b}_{m-2\ell r,r+1}$ and
\begin{equation}\label{m1q}
\left\{\begin{aligned}
  &(m+2-2\ell i)a_0\alpha_i-{\rm sgn}(q-i)(i+1)a_0\alpha_{i+1}=\hat{a}_{m+1-2\ell i,i},~i=1,...,r,\\
  &(2\ell-1)a_0\alpha_i+(m+1+2\ell-2\ell i)a_0\beta_i-(i+1)a_0\beta_{i+1}=\hat{b}_{m+2\ell-2\ell i,i},~i=1,...,r,\\
  &\hat{a}_{m+1,0}-(m+2)a_0\alpha_0+a_0\alpha_1=-\mu_{m+2},\\
  &\hat{b}_{m+2\ell,0}-(2\ell-1)a_0\alpha_0+a_0\beta_1=\mu_{m+2},
\end{aligned}\right.
\end{equation}
in variables $\alpha_0, \alpha_1,..., \alpha_q, \beta_1,..., \beta_{r+1}, \mu_{m+2}$.
Clearly,
\begin{equation*}
  i\le q=\frac{m+2}{2\ell}=\biggl\lfloor\frac{m+1}{2\ell}\biggr\rfloor+1<\frac{m+1}{2\ell}+1
\end{equation*}
implying $m+2-2\ell i>0$ for $1\le i \le q-1$ and $m+1+2\ell -2\ell i>0$ for $1\le i \le r+1$.
By straight computations, we get a solution of $EQ_1$ given by
\begin{equation*}
  \alpha_q=0,~~~~~\beta_{r+1}=\frac{\hat{b}_{m-2\ell r,r+1}}{a_0(m+1-2\ell r)}
\end{equation*}
and
\begin{equation}\label{ppsl}
\begin{aligned}
 \alpha_i&=\frac{\hat{a}_{m+1-2\ell i,i}+{\rm sgn}(q-i)(i+1)\alpha_{i+1}}{a_0(m+2-2\ell i)},~~~i=1,...,r,\\
 \beta_i&=\frac{\hat{b}_{m+2\ell-2\ell i,i}+(i+1)\beta_{i+1}-(2\ell-1)\alpha_i}{a_0(m+1+2\ell-2\ell i)},~~~i=1,...,r,\\
 \alpha_0&=\frac{\hat{a}_{m+1,0}+\hat{b}_{m+2\ell,0}+a_0(\alpha_1+\beta_1)}
 {a_0(m+1+2\ell)},\\
\mu_{m+2}&=\hat{b}_{m+2\ell,0}-(2\ell-1)a_0\alpha_0+a_0\beta_1.
\end{aligned}
\end{equation}

For the second subcase: $2\ell \nmid (m+2)$ and $2\ell \mid (m+1)$. Then we get $q=r$ and $r=\lfloor m/(2\ell)\rfloor+1$.
By \eqref{ppe}, we get an algebraic system $EQ_2$ as \eqref{m1q} in variables $\alpha_0, \alpha_1,..., \alpha_q, \beta_1,..., \beta_{r+1}, \mu_{m+2}$.
Since
\begin{equation}\label{seq2}
  i\le q=\biggl\lfloor\frac{m+2}{2\ell}\biggr\rfloor<\frac{m+2}{2\ell}
\end{equation}
implying $m+2-2\ell i>0$ for $1\le i \le q$ and
\begin{equation*}
  i\le r=\biggl\lfloor\frac{m}{2\ell}\biggr\rfloor+1<\frac{m+1}{2\ell}+1
\end{equation*}
implying $m+1+2\ell -2\ell i>0$ for $1\le i \le r$,
it is not hard to obtain a solution of $EQ_2$ given by $\beta_{r+1}=0$ and \eqref{ppsl}.

For the third subcase: $2\ell \nmid (m+2)$ and $2\ell \nmid (m+1)$. Then we get $q=r=\lfloor m/(2\ell)\rfloor$.
By \eqref{ppe}, we get an algebraic system $EQ_3$ consisting of $(m+1-2\ell r)a_0\beta_{r+1}=\hat{b}_{m-2\ell r,r+1}$ and \eqref{m1q} in variables $\alpha_0, \alpha_1,..., \alpha_q, \beta_1,..., \beta_{r+1}, \mu_{m+2}$.
From \eqref{seq2} and
\begin{equation*}
  i\le r+1=\biggl\lfloor\frac{m+1}{2\ell}\biggr\rfloor+1<\frac{m+1}{2\ell}+1
\end{equation*}
implying $m+1+2\ell -2\ell i>0$ for $1\le i\le r+1$,
we uniquely obtain
\begin{equation*}
  \beta_{r+1}=\frac{\hat{b}_{m-2\ell r,r+1}}{a_0(m+1-2\ell r)}
\end{equation*}
and \eqref{ppsl}.
Letting $p_{m+2-2\ell i,i}:=\alpha_i,i=0,...,q$ and $q_{m+1+2\ell-2\ell i,i}:=\beta_i,i=1,...,r+1$, we find a near-identity
\begin{equation}\label{nilp}
  \left(\begin{array}{c}
      x \\
      y \\
    \end{array}
  \right)\to\left(
   \begin{array}{c}
      x \\
      y \\
    \end{array}
  \right)
  +\sum_{k=1}^{m+1}
  \left(
    \begin{array}{c}
     \sum\limits_{i=0}^{\lfloor\frac{k+1}{2\ell}\rfloor}p_{k+1-2\ell i,i}x^{k+1-2\ell i}y^i\\
      \sum\limits_{i=1}^{1+\lfloor\frac{k}{2\ell}\rfloor}q_{k+2\ell-2\ell i,i}x^{k+2\ell-2\ell i}y^i \\
    \end{array}
  \right)+{\boldsymbol R}_{m+2}(x,y)
\end{equation}
changing system~\eqref{pcg} into
\begin{equation}\label{nflp}
  \left(\begin{array}{c}
      \dot{x} \\
      \dot{y} \\
    \end{array}
  \right)=
  \left(\begin{array}{c}
    a_0-\sum\limits_{k=1}^{m+1}\mu_{k+1} x^k \\
    -a_0x^{2\ell-1}+\sum\limits_{k=1}^{m+1}\mu_{k+1} x^{k+2\ell-1} \\
  \end{array}\right)+{\boldsymbol R}_{m+1}(x,y)
\end{equation}
in a small neighbourhood of $O$.
Note that terms in ${\boldsymbol R}_{m+2}(x,y)$ in \eqref{nilp} do not
influence the terms of quasi-homogeneous degree no more than $m$ in \eqref{nflp}.
It means that one can get \eqref{npN} from \eqref{pcg} by the $(m+1)$-th order truncation of \eqref{nilp}.
Therefore, the claim below system~\eqref{pcg} is proved by the mathematical induction.

Combining linear transformation~\eqref{tl} with near-identity transformation~\eqref{nips},
we find a diffeomorphism~\eqref{dfps} with
$r_{k+1-2\ell i,i}=p_{k+1-2\ell i,i}$($i=0,...,q$) and $s_{k+1-2\ell i,i}\!\!=\!\!-b_{2\ell-1,0}q_{k+1-2\ell i,i}/a_0$ ($i=1,...,r+1$).
This diffeomorphism rewrites the upper subsystem of \eqref{ge} as system~\eqref{npN}.
This completes the proof in Step 1.

{\it Step 2: transform \eqref{npN} into \eqref{nfps}.}
By direct calculation, we obtain that the time scaling~\eqref{tsps} rewrites system~\eqref{npN} as
\begin{equation*}
  \left(\begin{array}{c}
      \dot{x} \\
      \dot{y} \\
    \end{array}
  \right)=
  \left(\begin{array}{c}
    -1-\sum\limits_{k=1}^{N}\tilde{\mu}_kx^k \\
    x^{2\ell-1}+\sum\limits_{k=1}^{N}\tilde{\mu}_kx^{k+2\ell-1}  \\
  \end{array}\right)+\tilde{\boldsymbol R}_{N}(x,y),
\end{equation*}
where $\tilde{\mu}_1:=-T_1-\mu_2/a_0$ and
\begin{equation*}
  \tilde{\mu}_k:=-T_k-\frac{1}{a_0}\left(\mu_{k+1}+\sum\limits_{i=1}^{k-1}T_{k-i}\mu_{i+1}\right),~~~~k=2,...,N.
\end{equation*}
Solving $\tilde{\mu}_1=\cdots=\tilde{\mu}_N=0$,
we obtain
$$T_1=-\frac{\mu_2}{a_0},~~~~~T_k=-\frac{1}{a_0}\left(\mu_{k+1}+\sum_{i=1}^{k-1}T_{k-i}\mu_{i+1}\right), ~~k=2,...,N.$$
Thus time scaling \eqref{tsps} transforms \eqref{npN} into \eqref{nfps}.
This completes the proof in Step 2 and hence this lemma is proved.
\end{proof}

Before computing
the half return map in the following two lemmas,
we introduce
the following useful {\it ordinary Bell polynomials} (\cite{COM})
\begin{equation*}
  \hat{B}_{k,i}\left(v_1,v_2,...,v_{k-i+1}\right)=
  \sum_{S_{k-i+1}}\frac{i!}{b_1!b_2!\cdots b_{k-i+1}!}\prod_{j=1}^{k-i+1}v_j^{b_j},
  \end{equation*}
where
$S_{k-i+1}$ is the set of all $(k-i+1)$-tuples of nonnegative integers $(b_1,...,b_{k-i+1})$ satisfying $\sum_{j=1}^{k-i+1}jb_j=k$ and $\sum_{j=1}^{k-i+1}b_j=i$.
Notice that
\begin{equation*}
  {\left(\sum_{j=1}^{\infty}v_jx^j\right)}^n=\sum_{i=n}^{\infty}
  \hat{B}_{i,n}(v_1,...,v_{i-n+1})x^i.
\end{equation*}

\begin{lm}\label{lcofs}
In a small neighbourhood of $O$,
the upper half return map $\Pi^+(x)$ of FF normal form~\eqref{NFF} can be expanded as
\begin{equation*}
  \Pi^+(x)=-\sum_{k=1}^{N+1}u^+_k x^k+o\left(x^{N+1}\right)
\end{equation*}
and the coefficients $u^+_k$ are recursively given by
\begin{equation}\label{u+k}
u^+_1:=e^{\gamma^+_1 \pi},~~~~
  u^+_k:=\gamma^+_{k}C_k^{FF}\!+\!
   \Phi(\pi;\gamma^+_1,...,\gamma^+_{k-1})
   ~{\rm for}~ k\in \{2,...,N+1\},
   \end{equation}
where $C_k^{FF}$ is defined in Theorem~\ref{Lya-FFPP},
\begin{equation*}
 \Phi(\theta;\gamma^+_1,...,\gamma^+_{k-1}):=\!\!
 \left\{
 \begin{aligned}
 &0 &&{\rm if}~k=2,\\
 &\sum_{i=2}^{k-1}\!\gamma^+_{i}\!\!\int_{0}^{\theta}\!\!\sin^{i-1}\tau
  e^{\gamma^+_1(\theta-\tau)} \hat{B}_{k,i}\left(r_1(\tau),r_2(\tau),...,r_{k-i+1}(\tau)\right)d\tau~ &&{\rm if}~k>2.
  \end{aligned}
  \right.
\end{equation*}
Here $\hat{B}_{k,i}$ is ordinary Bell polynomial, $r_1(\tau)=e^{\gamma^+_1 \tau}$ and
\begin{equation*}
  r_j(\tau)=\gamma^+_{j}e^{\gamma^+_1\tau}\int_{0}^{\tau}e^{(j-1)\gamma^+_1 s}\sin^{j-1}
  s ds+\Phi(\tau;\gamma^+_1,...,\gamma^+_{j-1}),~~~~
  j=2,...,k-1.
\end{equation*}
\end{lm}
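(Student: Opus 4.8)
The plan is to compute the half return map $\Pi^+$ of \eqref{NFF} in polar coordinates, turning it into a triangular hierarchy of scalar linear ODEs whose solutions are bookkept by the ordinary Bell polynomials. Set $x=r\cos\theta,\ y=r\sin\theta$ in the upper subsystem of \eqref{NFF}. Since every nonlinear normal-form term $\gamma^+_{i+1}y^i(x,y)^\top$ is parallel to the position vector $(x,y)^\top$, it contributes nothing to $x\dot y-y\dot x$; hence $\dot\theta=1+O(|(x,y)|^{N+1})$, the remainder coming only from $\boldsymbol{G}^+_{N+2}$, whereas $\dot r=\gamma^+_1 r+\sum_{i=1}^{N}\gamma^+_{i+1}r^{i+1}\sin^i\theta+O(r^{N+2})$. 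Dividing, the orbit through $(x,0)$, $x\in(0,\varepsilon^+)$, is a graph $r=r(\theta;x)$ over $\theta\in[0,\pi]$ solving $dr/d\theta=\gamma^+_1 r+\sum_{i=1}^{N}\gamma^+_{i+1}r^{i+1}\sin^i\theta+O(r^{N+2})$ with $r(0;x)=x$; as counterclockwise rotation brings this orbit back to $y=0$ at $\theta=\pi$ on the negative $x$-axis, $\Pi^+(x)=-r(\pi;x)$, and everything reduces to expanding $r(\pi;x)$ up to order $x^{N+1}$.

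I would then substitute $r(\theta;x)=\sum_{k\ge1}r_k(\theta)x^k$ with $r_1(0)=1,\ r_k(0)=0$ $(k\ge2)$ — the $O(r^{N+2})$ tail is $O(x^{N+2})$ and leaves $r_1,\dots,r_{N+1}$ intact — expand the powers via $\big(\sum_{j\ge1}r_j x^j\big)^{m}=\sum_{l\ge m}\hat B_{l,m}(r_1,\dots,r_{l-m+1})x^l$, and match coefficients of $x^k$. With $j:=i+1$ this gives the triangular system $r_k'(\theta)-\gamma^+_1 r_k(\theta)=\sum_{j=2}^{k}\gamma^+_j\sin^{j-1}\theta\,\hat B_{k,j}\big(r_1(\theta),\dots,r_{k-j+1}(\theta)\big)$, whose right-hand side involves only $r_1,\dots,r_{k-1}$. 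Solving recursively with the integrating factor $e^{-\gamma^+_1\theta}$ and the above initial data yields $r_1(\theta)=e^{\gamma^+_1\theta}$ and, for $k\ge2$, $r_k(\theta)=\sum_{j=2}^{k}\gamma^+_j\int_0^\theta e^{\gamma^+_1(\theta-\tau)}\sin^{j-1}\tau\,\hat B_{k,j}\big(r_1(\tau),\dots,r_{k-j+1}(\tau)\big)\,d\tau$.

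Next I would isolate the $j=k$ summand: since $\hat B_{k,k}(r_1)=r_1^k=e^{k\gamma^+_1\tau}$ it equals $\gamma^+_k e^{\gamma^+_1\theta}\int_0^\theta e^{(k-1)\gamma^+_1 s}\sin^{k-1}s\,ds$, and the remaining terms ($j=2,\dots,k-1$) are precisely $\Phi(\theta;\gamma^+_1,\dots,\gamma^+_{k-1})$; note that $\gamma^+_k$ enters only through the isolated term, because each $r_j$ with $j<k$ depends only on $\gamma^+_1,\dots,\gamma^+_j$. This is exactly the stated recursive formula for $r_j(\tau)$, and putting $\theta=\pi$ gives $u^+_1=r_1(\pi)=e^{\gamma^+_1\pi}$ and $u^+_k=r_k(\pi)=\gamma^+_k\,e^{\gamma^+_1\pi}\!\int_0^\pi e^{(k-1)\gamma^+_1 s}\sin^{k-1}s\,ds+\Phi(\pi;\gamma^+_1,\dots,\gamma^+_{k-1})$.

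The last and principal task is to evaluate the elementary integral and recognise $e^{\gamma^+_1\pi}\int_0^\pi e^{(k-1)\gamma^+_1 s}\sin^{k-1}s\,ds=C_k^{FF}$. I would do this by a reduction formula in $k$ — integrating $\sin^{k-1}s$ by parts twice against the exponential to lower the power by two — or equivalently by writing $\sin^{k-1}s=\big((e^{is}-e^{-is})/(2i)\big)^{k-1}$ and summing the resulting geometric-type exponential integrals, treating the parity of $k$ separately so as to bring out the Wallis-type double-factorial ratio together with the factor $(1+(\gamma^+_1)^2)^{-(k-1)/2}$ and, for even $k$, the extra factor $(1+e^{\gamma^+_1\pi})$. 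The only other points to verify are routine: that the $o(|(x,y)|^{N+1})$ part of \eqref{NFF} does not perturb $u^+_1,\dots,u^+_{N+1}$ (immediate from degree counting once $\dot\theta=1+O(|(x,y)|^{N+1})$ is established) and that the index ranges in the Bell-polynomial expansion are consistent.
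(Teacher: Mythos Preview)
Your proposal is correct and follows essentially the same route as the paper: pass to polar coordinates, reduce to $dr/d\theta=\gamma_1^+r+\sum_{k\ge2}\gamma_k^+\sin^{k-1}\theta\,r^k+o(r^{N+1})$, expand $r(\theta;x)=\sum r_k(\theta)x^k$, use ordinary Bell polynomials for the powers, solve the resulting linear hierarchy, split off the $j=k$ summand, and identify $e^{\gamma_1^+\pi}\int_0^\pi e^{(k-1)\gamma_1^+s}\sin^{k-1}s\,ds=C_k^{FF}$. The only cosmetic difference is that the paper first substitutes $r=R\,e^{\gamma_1^+\theta}$ to kill the linear term before expanding (then converts back via the homogeneity $\hat B_{k,i}(\lambda r_1,\dots,\lambda r_{k-i+1})=\lambda^i\hat B_{k,i}(r_1,\dots)$), whereas you keep $r$ and use the integrating factor $e^{-\gamma_1^+\theta}$ at each level; the paper also simply asserts the integral identity for $C_k^{FF}$ rather than deriving it as you propose.
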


\begin{proof}
By $x=r\cos\theta, y=r\sin\theta$, we write the upper subsystem of FF normal form~\eqref{NFF} as polar coordinates form and obtain equation
\begin{equation}\label{pcfs}
  \frac{dr}{d\theta}=\gamma^+_1 r + \sum_{k=2}^{N+1}\gamma^+_{k} \sin^{k-1}\theta r^{k}+o(r^{N+1}).
\end{equation}
From the analyticity of the right hand side function of \eqref{pcfs},
we get that the solution $r(\theta,\rho)$ with $r(0,\rho)=\rho$ of \eqref{pcfs} can be expanded as $r(\theta,\rho)=\sum_{k=1}^{+\infty}r_k(\theta)\rho^k$
in neighborhoods of $\rho=0$.
By $r(\theta)=R(\theta)e^{\gamma^+_1 \theta}$,
we eliminate the linear term in \eqref{pcfs} and further obtain
  \begin{equation}\label{ceoR}
    \frac{dR}{d\theta}=\sum_{k=2}^{N+1}\gamma^+_{k}\sin^{k-1}\theta
    e^{(k-1)\gamma^+_1  \theta}R^k+o(R^{N+1}).
  \end{equation}
It is not hard to obtain that equation~\eqref{ceoR} has solution
\begin{equation}\label{slR}
  R(\theta,\rho)=\sum_{k=1}^{+\infty}R_k(\theta)\rho^k,
\end{equation}
where $ R(0,\rho)=\rho, R_k(\theta)=e^{-\gamma^+_1 \theta}r_k(\theta)$.

Substituting \eqref{slR} into two sides of equation~\eqref{ceoR}, we get
\begin{align*}
 \sum_{k=1}^{+\infty}R'_k(\theta)\rho^k=&\sum_{k=2}^{N+1}
 \gamma^+_{k}\sin^{k-1}\theta
    e^{(k-1)\gamma_1^+\theta}\left(\sum_{n=1}^{+\infty}R_n(\theta)
    \rho^n\right)^k+o(\rho^{N+1})\\
    =&\sum_{k=2}^{N+1}\gamma^+_{k}\sin^{k-1}\theta e^{(k-1)\gamma_1^+ \theta}\sum_{n=k}^{+\infty}\hat{B}_{n,k}
 \left(R_1(\theta),R_2(\theta),...,R_{k-i+1}(\theta)\right)\rho^n
 +o(\rho^{N+1})\\
 =&\sum_{k=2}^{N+1}\left(\sum_{i=2}^{k}\gamma^+_{i}\sin^{i-1}\theta e^{(i-1)\gamma_1^+\theta}\hat{B}_{k,i}
 \left(R_1(\theta),R_2(\theta),...,R_{k-i+1}(\theta)\right)\right)
 \rho^k+o(\rho^{N+1}).
\end{align*}
 Comparing the coefficients of $\rho^k$, we obtain $R'_1(\theta)=0$ and
 \begin{equation*}
   R'_k(\theta)=\sum_{i=2}^{k}\gamma^+_{i}\sin^{i-1}\theta e^{(i-1)\gamma_1^+ \theta}\hat{B}_{k,i}
 \left(R_1(\theta),R_2(\theta),...,R_{k-i+1}(\theta)\right),~~~k=2,...,N+1.
 \end{equation*}
By the initial condition $r(0,\rho)=\rho$ implying $R_1(0)=r_1(0)=1, R_k(0)=r_k(0)=0,k\ge2$ and above equations,
 we get that $r_1(\theta)=e^{\gamma_1^+ \theta}$ and
  for $2\le k\le N+1$,
 \begin{align*}
   r_k(\theta)\!\!=&e^{\gamma_1^+ \theta}\sum_{i=2}^{k}\int_{0}^{\theta}\gamma^+_{i}\sin^{i-\!1}\tau e^{(i-\!1)\gamma_1^+ \tau }\hat{B}_{k,i}
 \left(R_1(\tau ),R_2(\tau ),...,R_{k-i+1}(\tau )\right)d\tau\\
 =&e^{\gamma_1^+ \theta}\!\left(\!\int_{0}^{\theta}\gamma^+_{k}\sin^{k\!-\!1}\tau e^{(k-\!1)\gamma_1^+ \tau }d\tau\!+\!
 \sum_{i=2}^{k\!-\!1}\int_{0}^{\theta}\!\gamma^+_{i}\sin^{i\!-\!1}\tau e^{(i\!-\!1)\gamma_1^+ \tau }\hat{B}_{k,i}
 \left(R_1(\tau ),...,R_{k-i+1}(\tau )\right)d\tau\!\right)\\
 =&\gamma^+_{k}e^{\gamma_1^+ \theta}\int_{0}^{\theta}\sin^{k\!-\!1}\tau e^{(k\!-\!1)\gamma_1^+ \tau }d\tau\!+\!
 \sum_{i=2}^{k-1}\gamma^+_{i}\int_{0}^{\theta}\sin^{i\!-\!1}\tau e^{\gamma_1^+ (\theta-\tau) }\hat{B}_{k,i}
 \left(r_1(\tau ),...,r_{k-i+1}(\tau )\right)d\tau,
 \end{align*}
where $\hat{B}_{k,i}
 (e^{-\gamma_1^+\tau}r_1(\tau ),...,e^{-\gamma_1^+\tau}r_{k-i+1}(\tau ))=e^{-i\gamma_1^+\tau}\hat{B}_{k,i}
 (r_1(\tau ),...,r_{k-i+1}(\tau ))$ is used.
Associating  with
$$
  \int_{0}^{\pi}\sin^{k-1}\tau e^{(k-1)\gamma_1^+ \tau }d\tau=e^{-\gamma_1^+\pi} C_k^{FF},~~k\ge2
$$
and $\Pi^+(x)=-r(\pi,x)=-\sum_{k=1}^{+\infty}r_k(\pi)x^k$, this lemma is proved.
 \end{proof}

\begin{lm}\label{lcops}
In a small neighbourhood of $O$,
the upper half return map $\Pi^+(x)$ of PP normal form~\eqref{NPP} can be expanded as
\begin{equation}
  \Pi^+(x)=-\sum_{k=1}^{N+1}v^+_k x^k+o(x^{N+1}),
\label{rtmap}
\end{equation}
and the coefficients $v^+_k$ are recursively given by $v^+_1:=1$ and
\begin{equation*}
v^+_2:=\frac{2\sigma_2^+}{1+2\ell^+},~~  v^+_k:=\left(1+(-1)^{k}\right) \frac{\sigma^+_{k}}{k+2\ell^+-1}+
 \Psi(\sigma^+_2,...,\sigma^+_{k-1})~{\rm for}~3\le k\le N+1,
\end{equation*}
where
$$
  \Psi(\sigma^+_2,...,\sigma^+_{k-1})\!:=\!
  -\hat{B}_{k+2\ell^+\!-\!1,2\ell^+}(\!v^+_1\!,...,v^+_{k-1}\!,0)\!-\!\sum_{i=2\ell^++1}
  ^{k+2\ell^+\!-\!2}\!\!(\!-1\!)^i\frac{\sigma^+_{i-2\ell^+\!+\!1}}{i}\hat{B}_{k+2\ell^+\!-\!1,i}
  (\!v^+_1\!,...,v^+_{k+2\ell^+\!-i}).
$$
\end{lm}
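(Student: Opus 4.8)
The plan is to reduce the computation of the upper half return map $\Pi^+$ of~\eqref{NPP} to an implicit scalar relation, using that the first component of the upper subsystem equals $-1$ up to the quasi-homogeneous remainder ${\boldsymbol R}^+_N$. Write the upper subsystem of~\eqref{NPP} as $\dot x=-1+P(x,y)$, $\dot y=x^{2\ell^+-1}+\sum_{i=1}^{N}\sigma^+_{i+1}x^{2\ell^+-1+i}+Q(x,y)$ with $(P,Q)^\top={\boldsymbol R}^+_N(x,y)$ of quasi-homogeneous degree $\ge N$. Since $\dot x<0$ on a neighbourhood of $O$, the orbit through $(x_0,0)$, $x_0\in(0,\varepsilon^+)$, has $x$ strictly decreasing from $x_0$ to $\Pi^+(x_0)<0$; as $\dot x$ is bounded away from $0$ near $O$, $x$ itself serves as an independent variable along the orbit, with no singularity. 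The function $H(x,y):=y+x^{2\ell^+}/(2\ell^+)$ is a first integral of the principal part $(-1,x^{2\ell^+-1})^\top$ and satisfies $H(x,0)=x^{2\ell^+}/(2\ell^+)$. Integrating $\dot H$ along the orbit from $(x_0,0)$ to $(\Pi^+(x_0),0)$ and replacing $dt$ by $dx/\dot x$, the return condition becomes
\[
\frac{\bigl(\Pi^+(x_0)\bigr)^{2\ell^+}-x_0^{2\ell^+}}{2\ell^+}
=\sum_{i=1}^{N}\frac{\sigma^+_{i+1}}{2\ell^++i}\Bigl(x_0^{2\ell^++i}-\bigl(\Pi^+(x_0)\bigr)^{2\ell^++i}\Bigr)+E(x_0),
\]
where $E(x_0)$ collects the contributions of $P$ and $Q$.

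Next I would discard $E$. Under the quasi-homogeneous scaling $(x,y,t)\mapsto(\lambda x,\lambda^{2\ell^+}y,\lambda t)$ of type $(1,2\ell^+)$ the principal part is invariant, while a term of ${\boldsymbol R}^+_N$ of quasi-homogeneous degree $d\ge N$ acquires a factor $\lambda^{d+1}$; propagating this through the return map shows such a term perturbs $\Pi^+(x)$ only from order $x^{d+2}$ onward, so $E(x_0)=o(x_0^{N+1})$ and it does not influence $v^+_1,\dots,v^+_{N+1}$. Substituting the ansatz $\Pi^+(x)=-\sum_{k\ge1}v^+_kx^k$ and expanding each power $\bigl(\Pi^+(x_0)\bigr)^n=\sum_{p\ge n}\hat B_{p,n}(v^+_1,v^+_2,\dots)\,x_0^p$ with ordinary Bell polynomials, exactly as in the proof of Lemma~\ref{lcofs}, I match coefficients. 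The coefficient of $x_0^{2\ell^+}$ gives $(v^+_1)^{2\ell^+}=1$, hence $v^+_1=1$ (the positive root, since $\Pi^+(x_0)\approx-x_0$). For $k\ge2$, matching the coefficient of $x_0^{2\ell^++k-1}$ and using $\hat B_{2\ell^++k-1,\,2\ell^+}(v^+_1,\dots,v^+_k)=2\ell^+v^+_k+\hat B_{2\ell^++k-1,\,2\ell^+}(v^+_1,\dots,v^+_{k-1},0)$ (valid because $v^+_1=1$), together with the fact that $x_0^{2\ell^++i}-\bigl(\Pi^+(x_0)\bigr)^{2\ell^++i}$ has lowest-order term $(1-(-1)^i)x_0^{2\ell^++i}$, isolates $v^+_k$: the $i=k-1$ contribution supplies the term proportional to $\sigma^+_k$ with factor $1-(-1)^{k-1}=1+(-1)^k$, and the contributions with $i\le k-2$ together with the Bell term from the left-hand side assemble into $\Psi(\sigma^+_2,\dots,\sigma^+_{k-1})$. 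Taking $k=2$ gives $v^+_2=2\sigma^+_2/(1+2\ell^+)$, and induction on $k$ closes the recursion, each $v^+_k$ being a polynomial in $\sigma^+_2,\dots,\sigma^+_k$ and $v^+_1,\dots,v^+_{k-1}$.

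The main obstacle I expect is the combinatorial bookkeeping in this matching step: one must reindex $i\mapsto i+2\ell^+$ to rewrite $\sum_i(-1)^i\tfrac{\sigma^+_{i+1}}{2\ell^++i}\hat B_{2\ell^++k-1,\,2\ell^++i}(\cdots)$ in the form $\sum_{i=2\ell^++1}^{k+2\ell^+-2}(-1)^i\tfrac{\sigma^+_{i-2\ell^++1}}{i}\hat B_{k+2\ell^+-1,\,i}(v^+_1,\dots,v^+_{k+2\ell^+-i})$ appearing in $\Psi$, and verify that every Bell polynomial occurring in $\Psi$ has arguments among $v^+_1,\dots,v^+_{k-1}$ only, so that the recursion is genuinely one determining $v^+_k$. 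A secondary technical point is the analytic justification of the termwise integration and of the power-series rearrangements, which follows from the analyticity of the right-hand sides of~\eqref{ge} and~\eqref{NPP} together with Lemma~\ref{nfops}, and from $\dot x$ being bounded away from $0$ near $O$. This argument is the parabolic counterpart of the proof of Lemma~\ref{lcofs}, with the polar coordinates there replaced by the quasi-homogeneous first integral $H$ and the same ordinary Bell polynomial machinery.
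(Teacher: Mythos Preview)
Your approach is correct and genuinely different from the paper's. The paper proves this lemma by first pushing the normal-form reduction to a higher order $M>N$ (Theorem~\ref{NFFFPP} being available for every order), then applying a time rescaling to make $\dot x=-1$ exactly, and finally invoking an external result (Theorem~C and Formula~(10) of \cite{DD}) that gives the return-map coefficients directly in terms of Bell polynomials and certain coefficients $\mu^+_i=\sigma^+_{i-2\ell^++1}/i$. You instead work directly with the quasi-homogeneous first integral $H(x,y)=y+x^{2\ell^+}/(2\ell^+)$ of the principal part, integrate $\dot H$ along the orbit to obtain the implicit relation between $\Pi^+(x_0)$ and $x_0$, and then match coefficients via Bell polynomials --- essentially reconstructing the cited result from scratch. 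Your route is longer but self-contained; the paper's is shorter but depends on \cite{DD}.

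One minor imprecision: the bound $E(x_0)=o(x_0^{N+1})$ is true but insufficient at the level of the implicit relation, which lives at degree $\ge 2\ell^+$; to determine $v^+_{N+1}$ you need $E(x_0)=o(x_0^{N+2\ell^+})$. Your own scaling argument actually delivers the stronger bound $E(x_0)=O(x_0^{N+2\ell^++1})$ (since a vector-field term of quasi-degree $d$ contributes at order $x_0^{2\ell^++d+1}$ to the integrated relation), and your parallel claim that such a term perturbs $\Pi^+$ only from order $x^{d+2}$ onward is correct and is what you really use.
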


\begin{proof}
  Since the PP normal form can be obtained for any positive integer $N$,
  for a given $N$ there exists sufficiently large $M>N$ such that the upper subsystem of PP normal form~\eqref{NPP} becomes
  \begin{equation}\label{lypp}
    \left(\begin{array}{c}
      \dot{x} \\
      \dot{y} \\
    \end{array}
  \right)=
  \left(\begin{array}{c}
    -1 \\
    x^{2\ell^+-1}+\sum\limits_{k=1}^{M}\sigma^+_{k+1} x^{k+2\ell^+-1}
  \end{array}\right)+{\boldsymbol R}^+_{M}(x,y),
  \end{equation}
 where ${\boldsymbol R}^+_{M}(x,y):=(R^+_{M+1}(x,y), R^+_{M+2\ell^+}(x,y))^\top$ and
 $R^+_{M+1}(x,y)$ (resp. $R^+_{M+2\ell^+}(x,y)$) is the sum of quasi-homogeneous polynomials of type $(1,2\ell^+)$ and degree no less than $M+1$ (resp. $M+2\ell^+$).
By time scaling $dt\to dt/(1+R^+_{M+1}(x,y))$,
system~\eqref{lypp} is transformed into
\begin{equation}
    \left(\begin{array}{c}
      \dot{x} \\
      \dot{y} \\
    \end{array}
  \right)=
  \left(\begin{array}{c}
    -1 \\
    x^{2\ell^+-1}+\sum\limits_{k=1}^{M}\sigma^+_{k+1} x^{k+2\ell^+-1}+R^+_{M+2\ell^+}(x,y)
  \end{array}\right).
  \label{NSsysform}
  \end{equation}

From analyticity of $R^+_{M+2\ell^+}(x,y)$, we write
$
  R^+_{M+2\ell^+}(x,y)=h(x,0)+yg(x,y),
$
where analytic functions $h(x,0)$ and $g(x,y)$ are the sums of quasi-homogeneous polynomial with
degree no less than $M+2\ell^+$ and $M$ respectively.
By \cite[Theorem C]{DD}, the upper half return map $\Pi^+(x)$ of system~\eqref{NSsysform}
is of form \eqref{rtmap} and $v_1^+=1$, $v^+_2=2\sigma_2^+/(1+2\ell^+)$ and for $3\le k\le N+1$,
$$
  v_k^+=\mu^+_{k+2\ell^+-1}-\mu^+_{2\ell^+}\hat{B}_{k+2\ell^+-1,2\ell^+}
  (v^+_1,...,v^+_{k-1},0)-
  \sum\limits_{i=2\ell^++1}^{k+2\ell^+-1}
 (-1)^{i}\mu^+_{i}\hat{B}_{k+2\ell^+-1,i}(v^+_1,...,v^+_{k+2\ell^+-i}),
 $$
 where $\mu^+_{2\ell^+}=1/(2\ell^+)$ and for $i=2\ell^++1,...,2\ell^++N$
\begin{equation*}
  \mu^+_i=\frac{1}{i!}\sum_{j=1}^{i}(-1)^j
  \left(
  \begin{array}{c}
  i \\
  j \\
   \end{array}
   \right)
  \frac{d^{i-1}x^{2\ell^+}f(x)}{dx^{i-1}}\bigg|_{x=0}=
  \frac{\sigma^+_{i-2\ell^++1}}{i}
\end{equation*}
by  \cite[Formula (10)]{DD}. Here
\begin{equation*}
  f(x):=\sum_{k=1}^{M}\sigma^+_{k+1} x^{k-1}+x^{-2\ell^+}h(x,0).
\end{equation*}
Substituting $\mu^+_i$ into the formula of $v_k^+$, we further get
 \begin{align*}
v_k^+ =&\left(1-(-1)^{k-1}\hat{B}_{k+2\ell^+-1,k+2\ell^+-1}(v^+_1)\right)
 \frac{\sigma^+_{k}}{k+2\ell^+-1}+
 \Psi(\sigma^+_2,...,\sigma^+_{k-1})\\
 =&\left(1+(-1)^k\right) \frac{\sigma^+_{k}}{k+2\ell^+-1}+
 \Psi(\sigma_2^+,...,\sigma^+_{k-1}),
\end{align*}
where $\Psi(\sigma^+_2,...,\sigma^+_{k-1})$ is given in the statement of this lemma.
The proof is finished.
\end{proof}



\section{Proofs of main results}
\setcounter{equation}{0}
\setcounter{lm}{0}
\setcounter{thm}{0}
\setcounter{rmk}{0}
\setcounter{df}{0}
\setcounter{cor}{0}

By the preliminary lemmas given in section 2, we can prove our main results Theorems~\ref{NFFFPP}-\ref{OFF} as follows.

\begin{proof}[Proof of Theorem~\ref{NFFFPP}]
Since three normal forms are given for three cases respectively, this proof is split into three steps.

{\it Step 1: obtain FF normal form~\eqref{NFF}.}
Suppose that system~\eqref{ge} has a monodromic singular point $O$ of FF type.
By Lemma~\ref{niofs}, there exists a diffeomorphism
\begin{equation}\label{unifs}
  \left(\begin{array}{c}
      x \\
      y \\
    \end{array}
  \right)\to
   \left(
     \begin{array}{cc}
       1 & q^+_2 \\
       0 & q^+_1 \\
     \end{array}
   \right)
  \left(
   \begin{array}{c}
      x \\
      y \\
    \end{array}
  \right)+\sum_{k=2}^{N+1}
  \left(\begin{array}{c}
      \sum\limits_{i=0}^{k}r^+_{k-i,i}x^{k-i} y^{i} \\
      \sum\limits_{i=1}^{k}s^+_{k-i,i}x^{k-i} y^{i} \\
    \end{array}\right)
\end{equation}
and a time scaling $dt\to \left(1-\sum_{k=1}^{N}T^+_ky^k\right)dt/\beta^+ $
rewriting the upper subsystem of \eqref{ge} as the upper one of \eqref{NFF}.
We denote transformation $(x,y)^\top\to(x,-y)^\top$ by $T$ and
\begin{equation*}
  \left(\begin{array}{c}
      x \\
      y \\
    \end{array}
  \right)\to
   \left(
     \begin{array}{cc}
       1 & q^-_2 \\
       0 & q^-_1 \\
     \end{array}
   \right)
  \left(
   \begin{array}{c}
      x \\
      y \\
    \end{array}
  \right)+\sum_{k=2}^{N+1}
  \left(\begin{array}{c}
      \sum\limits_{i=0}^{k}r^-_{k-i,i}x^{k-i} y^{i} \\
      \sum\limits_{i=1}^{k}s^-_{k-i,i}x^{k-i} y^{i} \\
    \end{array}\right)
\end{equation*}
by $T_1$.
It is not hard to obtain there exists a diffeomorphism $T\circ T_1 \circ T$ and a time scaling $dt\to \left(1-\sum_{k=1}^{N}T^-_ky^k\right)dt/\beta^- $
rewriting the lower subsystem of \eqref{ge} as the lower one of \eqref{NFF} by Lemma~\ref{niofs}.
From the proof of Lemma~\ref{niofs} we get that $r_{k,0}$ ($k=2,...,N+1$) in \eqref{dhfs} can be arbitrarily chosen.
Thus, let $r^{\pm}_{k,0}=0$ for $k=2,...,N+1$ and we find a transformation
\begin{equation}\label{whff}
  \left(\begin{array}{c}
      x \\
      y \\
    \end{array}
  \right)\to\left\{\begin{aligned}
   &\left(
     \begin{array}{cc}
       1 & q^+_2 \\
       0 & q^+_1 \\
     \end{array}
   \right)
  \left(
   \begin{array}{c}
      x \\
      y \\
    \end{array}
  \right)+\sum_{k=2}^{N+1}
  \left(\begin{array}{c}
      \sum\limits_{i=1}^{k}r^+_{k-i,i}x^{k-i} y^{i} \\
      \sum\limits_{i=1}^{k}s^+_{k-i,i}x^{k-i} y^{i} \\
    \end{array}\right)~~&&{\rm for~}y\ge0,\\
    &\left(
     \begin{array}{cc}
       1 & q^-_2 \\
       0 & q^-_1 \\
     \end{array}
   \right)
  \left(
   \begin{array}{c}
      x \\
      y \\
    \end{array}
  \right)+\sum_{k=2}^{N+1}
  \left(\begin{array}{c}
      \sum\limits_{i=1}^{k}(-1)^ir^-_{k-i,i}x^{k-i} y^{i} \\
      \sum\limits_{i=1}^{k}(-1)^{i+1}s^-_{k-i,i}x^{k-i} y^{i} \\
    \end{array}\right)&&{\rm for~}y\le0\\
    \end{aligned}\right.
\end{equation}
and a time scaling
\begin{equation*}
  dt\to\left\{\begin{aligned}
  &\frac{1}{\beta^+}\left(1-\sum_{k=1}^{N}T^+_ky^k\right)dt~~&&{\rm for~}y\ge0,\\
  &\frac{1}{\beta^-}\left(1-\sum_{k=1}^{N}T^-_ky^k\right)dt &&{\rm for~}y\le0
  \end{aligned}\right.
\end{equation*}
changing system~\eqref{ge} into \eqref{NFF} in a small neighbourhood of $O$.
Since transformation~\eqref{whff} is a homeomorphism by Lemma~\ref{niih} and $\beta^+\beta^->0$,
we get that system~\eqref{ge} and \eqref{NFF} are topologically equivalent to each other near $O$.
This completes the proof in Step 1.

{\it Step 2: obtain FP normal form~\eqref{NFP}.} Suppose that system~\eqref{ge} has a monodromic singular point $O$ of FP type.
The upper subsystem of \eqref{NFP} can be similarly obtained as shown in proof of FF normal form in step 1.
We denote transformation
\begin{equation*}
  \left(\begin{array}{c}
      x \\
      y \\
    \end{array}
  \right)\to
  \left(
    \begin{array}{cc}
      1 & 0 \\
      0 & q^-_1 \\
    \end{array}
  \right)
  \left(
   \begin{array}{c}
      x \\
      y \\
    \end{array}
  \right)
  +\sum_{k=2}^{N+1}
  \left(
    \begin{array}{c}
     \sum\limits_{i=0}^{\lfloor\frac{k}{2\ell}\rfloor}r^-_{k-2\ell i,i}x^{k-2\ell i}y^i\\
      \sum\limits_{i=1}^{1+\lfloor\frac{k-1}{2\ell}\rfloor}s^-_{k+2\ell-1-2\ell i,i}x^{k+2\ell-1-2\ell i}y^i \\
    \end{array}
  \right)
\end{equation*}
by $T_2$.
According to Lemma~\ref{nfops}, there exists a diffeomorphism $T\circ T_2\circ T$ and a time scaling $dt\to \left(1-\sum_{k=1}^{N}T_kx^k\right)dt/a^-_0$ rewriting the lower subsystem of \eqref{ge} as the lower one of \eqref{NFP},
where $a^-_0=X^-(0,0)$.
Note that $r^+_{k,0}$ in \eqref{unifs} can be arbitrarily chosen, we let $r^+_{k,0}=r^-_{k,0}$ for $k=2,...,N+1$.
Therefore, transformation
\begin{equation}\label{whpf}
  \left(\begin{array}{c}
      x \\
      y \\
    \end{array}
  \right)\!\to\!\left\{\!\!\!\begin{aligned}
   \!&\left(\!\!
     \begin{array}{cc}
       1 & q^+_2 \\
       0 & q^+_1 \\
     \end{array}
  \! \!\right)\!\!
  \left(\!\!
   \begin{array}{c}
      x \\
      y \\
    \end{array}
 \! \!\right)\!+\!\sum_{k=2}^{N+1}
  \left(\!\!
  \begin{array}{c}
      r^-_{k,0}x^k+\sum\limits_{i=1}^{k}r^+_{k-i,i}x^{k-i} y^{i} \\
      \sum\limits_{i=1}^{k}s^+_{k-i,i}x^{k-i} y^{i} \\
    \end{array}
    \!\!\right)\!&&{\rm for~}y\ge0,\\
    &\left(\!\!
    \begin{array}{cc}
      1 & 0\\
      0 & q^-_1 \\
    \end{array}
 \! \!\right)\!\!
  \left(\!\!
   \begin{array}{c}
      x \\
      y \\
    \end{array}
 \!\! \right)
  \!+\!\sum_{k=2}^{N+1}
  \left(\!\!\!\!
    \begin{array}{c}
      r^-_{k,0}x^k+\sum\limits_{i=1}^{\lfloor\frac{k}{2\ell^-}\rfloor}
      \tilde{r}^-_{k-2\ell^- i,i}x^{k-2\ell^- i}y^i\\
      \sum\limits_{i=1}^{1+\lfloor\frac{k-1}{2\ell^-}\rfloor}\tilde{s}^-_{k-1
      -2\ell^- (i-1),i}x^{k-1
      -2\ell^- (i-1)}y^i
      \end{array}
     \!\! \!\!\right)&&{\rm for~}y\le0\\
    \end{aligned}\right.
\end{equation}
with $\tilde{r}^-_{k-2\ell^- i,i}:=(-1)^i{r}^-_{k-2\ell^- i,i}$ and $\tilde{s}^-_{k-1-2\ell^-(i-1),i}:=(-1)^{i+1}{s}^-_{k-1-2\ell^-(i-1),i}$ for $i=1,...,k$
and time scaling
\begin{equation*}
  dt\to\left\{\begin{aligned}
  &\frac{1}{\beta^+}\left(1-\sum_{k=1}^{N}T^+_ky^k\right)dt~~&&{\rm for~}y\ge0,\\
  &\frac{1}{a_0^-}\left(1-\sum_{k=1}^{N}T^-_ky^k\right)dt&&{\rm for~}y\le0
  \end{aligned}\right.
\end{equation*}
change system~\eqref{ge} into \eqref{NFP} in a small neighbourhood of $O$.
Since transformation~\eqref{whpf} is a homeomorphism by Lemma~\ref{niih} and $\beta^+a_0^->0$,
we get that system~\eqref{ge} and \eqref{NFP} are topologically equivalent to each other near $O$.
This completes the proof in Step 2.

{\it Step 3: obtain PP normal form~\eqref{NPP}.} Suppose that system~\eqref{ge} has a monodromic singular point $O$ of PP type.
Clearly, the lower subsystem of ~\eqref{NPP} can be similarly obtained as in the proof of FP normal form in step 2.
According to the proof of Lemma~\ref{nfops}, there exists a linear transformation $(x,y)^\top \to (x, q^+_1 y)^\top$
rewriting the upper subsystem of \eqref{ge} as
\begin{equation}\label{up2}
  \left(
    \begin{array}{c}
      \dot{x} \\
      \dot{y} \\
    \end{array}
  \right)=
  \left(
    \begin{array}{c}
      a_0^+ \\
      -a_0^+x^{2\ell^+-1} \\
    \end{array}
  \right)+\sum_{k\ge0}
  \left(
    \begin{array}{c}
      \sum\limits_{i=0}^{\lfloor\frac{k+1}{2\ell^+}\rfloor}
      \tilde{a}_{k+1-2\ell^+ i,i}x^{k+1-2\ell^+ i}y^i\\
      \sum\limits_{i=0}^{1+\lfloor\frac{k}{2\ell^+}\rfloor}
      \tilde{b}_{k+2\ell^+-2\ell^+ i,i}x^{k+2\ell^+-2\ell^+ i}y^i \\
    \end{array}
  \right),
\end{equation}
where $a_0^+=X^+(0,0)$.
We claim that there exists a near-identity
\begin{equation*}
  \left(\begin{array}{c}
      x \\
      y \\
    \end{array}
  \right)\to\left(
   \begin{array}{c}
      x \\
      y \\
    \end{array}
  \right)
  +\sum_{k=2}^{N+1}
  \left(
    \begin{array}{c}
     r^-_{k,0}x^{k}+\sum\limits_{i=1}^{\lfloor\frac{k}{2\ell^+}\rfloor}
     p^+_{k-2\ell^+ i,i}x^{k-2\ell^+ i}y^i\\
      \sum\limits_{i=1}^{1+\lfloor\frac{k-1}{2\ell^+}\rfloor}q^+_{k-1
      -2\ell^+ (i-1),i}x^{k-1
      -2\ell^+ (i-1)}y^i \\
    \end{array}
  \right)
\end{equation*}
changing system~\eqref{up2} into
\begin{equation}\label{npNns}
  \left(\begin{array}{c}
      \dot{x} \\
      \dot{y} \\
    \end{array}
  \right)=
  \left(\begin{array}{c}
    a_0^+-\sum\limits_{k=1}^{N}\nu^+_{k+1} x^k \\
    -a_0^+x^{2\ell^+-1}+\sum\limits_{k=1}^{N}\eta^+_{k+1} x^{k+2\ell^+-1} \\
  \end{array}\right)+{\bf R}^+_N(x,y),
\end{equation}
where $r^-_{k,0}$ is exactly the $r^-_{k,0}$ determined in \eqref{whpf}.
In fact, this claim can be proved by finding ${\boldsymbol g}^{k+1}(x,y) \in \tilde{\Delta}^{k+1}$ and $\nu^+_{k+2},\eta^+_{k+2}$ to satisfy
\begin{equation}\label{ppe2}
  {\boldsymbol F}_k(x,y)-ad_p^k{\boldsymbol g}^{k+1}(x,y)=
  \left(
    \begin{array}{c}
      -\nu^+_{k+2} x^{k+1} \\
      \eta^+_{k+2} x^{k+2\ell^+} \\
    \end{array}
  \right)\in \Theta^k,~~k=0,...,N-1.
\end{equation}
For $N=1$, it is equivalent to solve algebraic system $\{\tilde{a}_{10}-2a_0^+r^-_{20}+\omega a^+_0p^+_{01}=-\nu^+_2,
\tilde{b}_{2\ell^+,0}- (2\ell^+-1)a_0^+r^-_{20}+a_0^+q^+_{11}=\eta^+_2,
\tilde{b}_{01}-a_0^+q^+_{11}-\omega a^+_0 p^+_{01}=0\}$
in variables $p^+_{01},q^+_{11},\nu^+_2,\eta^+_2$ by the proof of Lemma~\ref{nfops}, where $\omega:=\lfloor 1/\ell^+ \rfloor$.
Solving this system, we obtain $p^+_{01}=0, q^+_{11}=\tilde{b}_{01}/a_0^+$, $\nu^+_2=2a_0^+r^-_{20}-\tilde{a}_{10}$, $\eta^+_2=\tilde{b}_{2\ell^+,0}- (2\ell^+-1)a_0^+r^-_{20}+\tilde{b}_{01}$. Thus \eqref{ppe2} holds for $N=1$.
Assume that \eqref{ppe2} is true for $N=m$.
Let $p^+_{m+2-2\ell^+ i,i}:=\alpha_i$ for $i=1,...,q$ and $q^+_{m+1+2\ell^+-2\ell^+ i,i}:=\beta_i$ for $i=1,...,r+1$, where $r:=\lfloor(m+1)/(2\ell^+)\rfloor,q:=\lfloor(m+2)/(2\ell^+)\rfloor$.
We consider following two cases to prove our claim for $N=m+1$.
Case (I): $r=0$.
According to \eqref{ppe2},
we obtain an algebraic system
\begin{equation*}
  \left\{\begin{aligned}
  &{\rm sgn}(q)(2\ell^+-1)a^+_0\alpha_1+(m+1)a^+_0\beta_1=\hat{b}_{m,1},\\
  &\hat{a}_{m+1,0}-(m+2)a^+_0r^-_{m+2,0}=-\nu^+_{m+2},\\
  &\hat{b}_{m+2\ell^+,0}-(2\ell^+-1)a^+_0r^-_{m+2,0}+a^+_0\beta_1
  =\eta^+_{m+2}
\end{aligned}\right.
\end{equation*}
in variables $\alpha_1,\beta_1,\nu^+_{m+2},\mu^+_{m+2}$.
It is not hard to check that above system has a solution $\alpha_1=0$, $\beta_1=\hat{b}_{m,1}/((m+1)a^+_0)$ and
\begin{equation*}
  \nu^+_{m+2}=-\hat{a}_{m+1,0}+(m+2)a^+_0r^-_{m+2,0},~~~~
  \eta^+_{m+2}=\hat{b}_{m+2\ell^+,0}-(2\ell^+-1)a^+_0
  r^-_{m+2,0}+\frac{\hat{b}_{m,1}}{m+1}.
\end{equation*}
Case (II): $r\ge 0$.
As shown in the proof of Lemma~\ref{nfops},
we only need to solve algebraic systems $EQ'_j$ ($j=1,2,3$) in variables $\alpha_1,...,\alpha_q$, $\beta_1,...,\beta_{r+1}$,
$\nu^+_{m+2}, \eta^+_{m+2}$,
 where  $EQ'_j$ is $EQ_j$ with $\ell=\ell^+$ and the last two equations of \eqref{m1q} in $EQ_j$ are replaced by
\begin{equation*}
\hat{a}_{m+1,0}-(m+2)a_0^+r^-_{m+2,0}+a_0^+\alpha_1=-\nu^+_{m+2},~~~~
\hat{b}_{m+2\ell^+,0}-(2\ell^+-1)a_0^+r^-_{m+2,0}+a_0^+\beta_1=\eta^+_{m+2}.
\end{equation*}
It is not hard to obtain from $EQ'_j$ ($j=1,2,3$) that $\alpha_1,...,\alpha_q$, $\beta_1,...,\beta_{r+1}$ are exactly values given
in the proof of Lemma~\ref{nfops} in the solution of $EQ_j$ ($j=1,2,3$)
and
\begin{equation*}
  \nu^+_{m+2}=-\hat{a}_{m+1,0}+(m+2)a_0^+r^-_{m+2,0}-a_0^+\alpha_1,~~~~
  \eta^+_{m+2}=\hat{b}_{m+2\ell^+,0}-(2\ell^+-1)a_0^+r^-_{m+2,0}
  +a_0^+\beta_1.
\end{equation*}
Thus, \eqref{ppe2} is true for $N=m+1$. Therefore, our claim is true, i.e., system~\eqref{up2}
is equivalently changed into \eqref{npNns}.

Clearly, time scaling
  $dt\to dt/(-a_0^++\sum_{k=1}^{N}\nu^+_{k+1} x^k)$
changes the first component of the right-hand side of system~\eqref{npNns} into $-1+R^+_{N+1}(x,y)$.
And since
\begin{align*}
  \left(1-\sum\limits_{k=1}^{N}\frac{\nu^+_{k+1}}{a_0^+} x^k\right)^{-1}
  =&1+\sum_{l=1}^{+\infty}
  \left(\sum_{k=1}^{N}\frac{\nu^+_{k+1}}{a_0^+}x^k\right)^l\\
  =&1+\sum_{l=1}^{N}\sum_{k=l}^{N}\frac{\hat{B}_{k,l}(\nu^+_{2},...,
  \nu^+_{k-l+2})}{(a^+_0)^l}
  x^k+R^+_{N+1}(x,y)\\
  =&1+\sum_{k=1}^{N}\sum_{i=1}^{k}\frac{\hat{B}_{k,i}(\nu^+_{2},...,
  \nu^+_{k-i+2})}{(a^+_0)^i}
  x^k+\tilde{R}^+_{N+1}(x,y),
\end{align*}
we get that the second component becomes
\begin{align*}
 \frac{-a_0^+x^{2\ell^+-1}+\sum\limits_{k=1}^{N}\eta^+_{k+1} x^{k+2\ell^+-1}+R^+_{N+2\ell^+}(x,y)}
 {-a_0^++\sum\limits_{k=1}^{N}\nu^+_{k+1} x^k}
  =x^{2\ell^+-1}+\sum_{k=1}^{N}
  \sigma^+_{k+1} x^{k+2\ell^+-1}+\tilde{R}^+_{N+2\ell^+}(x,y),
\end{align*}
where $\eta^+_1:=-a_0^+$, $\sigma^+_2:=(\nu^+_2-\eta^+_2)/a_0^+$ and $$\sigma^+_{k+1}=-\frac{\eta^+_{k+1}}{a_0^+}-\sum_{j=1}^{k}
\sum_{i=1}^{j}\frac{\hat{B}_{j,i}
  (\nu^+_2,...,\nu^+_{j-i+2})\eta^+_{k-j+1}}{(a_0^+)^{i+1}},~~k=2,...,N.$$

From above analysis, we find a transformation
\begin{equation}\label{whpp}
  \left(\!\begin{array}{c}
      x \\
      y \\
    \end{array}
  \!\right)\!\!\to\!\!\left\{\!\!
 \!\! \begin{aligned}
  &\left(\!\!\!\begin{array}{cc}
      1 & 0 \\
      0 & q^+_1 \\
    \end{array}
  \!\!\!\right)\!\!\left(\!\!\!
   \begin{array}{c}
      x \\
      y \\
    \end{array}
 \!\!\! \right)
  \!+\!\sum_{k=2}^{N+1}
  \left(\!\!
    \begin{array}{c}
     r^-_{k,0}x^{k}+\sum\limits_{i=1}^{\lfloor\frac{k}{2\ell^+}\rfloor}
     r^+_{k-2\ell^+ i,i}x^{k-2\ell^+ i}y^i\\
      \sum\limits_{i=1}^{1+\lfloor\frac{k-1}{2\ell^+}\rfloor}s^+_{k-1
      -2\ell^+ (i-1),i}x^{k-1
      -2\ell^+ (i-1)}y^i \\
     \end{array}\!\!\right)&&{\rm for~}y\ge0,\\
     &\left(\!\!\!
    \begin{array}{cc}
      1 &  0 \\
      0 & q^-_1 \\
    \end{array}
  \!\!\right)\!\!
  \left(\!\!
   \begin{array}{c}
      x \\
      y \\
    \end{array}
  \!\!\!\right)
  \!+\!\sum_{k=2}^{N+1}
  \left(\!\!
    \begin{array}{c}
      r^-_{k,0}x^k+\sum\limits_{i=1}^{\lfloor\frac{k}{2\ell^-}\rfloor}
      \tilde{r}^-_{k-2\ell^- i,i}x^{k-2\ell^- i}y^i\\
      \sum\limits_{i=1}^{1+\lfloor\frac{k-1}{2\ell^-}\rfloor}
      \tilde{s}^-_{k-1
      -2\ell^- (i-1),i}x^{k-1
      -2\ell^- (i-1)}y^i \end{array}\!\!\right)&&{\rm for~}y\le0\\
    \end{aligned}\right.
\end{equation}
with ${r}^+_{k-2\ell^+ i,i}:=p^+_{k-2\ell^+ i,i}$ and ${s}^+_{k-1-2\ell^+ (i-1),i}:=q^+_1 q^+_{k-1-2\ell^+ (i-1),i}$ for $i=1,...,k$
and a time scaling
\begin{equation}
  dt\to\left\{\begin{aligned}
  &\frac{dt}{-a_0^++\sum\limits_{k=1}^{N}\nu^+_{k+1} x^k}~~&&{\rm for~}y\ge0,\\
  &\frac{1}{a_0^-}\left(1-\sum_{k=1}^{N}T^-_ky^k\right)dt&&{\rm for~}y\le0
  \end{aligned}\right.
  \label{tsfpp}
\end{equation}
changing system~\eqref{ge} into \eqref{NPP} in a small neighbourhood of $O$.
Since transformation~\eqref{whpp} is a homeomorphism by Lemma~\ref{niih} and $-a_0^+a_0^->0$,
we get system~\eqref{ge} and \eqref{NPP} are topologically equivalent to each other near $O$.
This completes the proof in Step 3.
\end{proof}

\begin{proof}[Proof of Theorem~\ref{Lya-FFPP}]
{\it Step 1: compute Lyapunov constants $V_k$ for FF normal form~\eqref{NFF}.}
Note that the upper half return map $\Pi^+(x)$ is obtained by Lemma~\ref{lcofs}.
By transformation $(x,y,t)\to (x,-y,-t)$,
the lower subsystem of \eqref{NFF} is changed into
\begin{equation*}
  \left(\begin{array}{c}
      \dot{x} \\
      \dot{y} \\
    \end{array}\right)
    =\left(\begin{array}{cc}
      -\gamma^-_1 & -1 \\
      1 &  -\gamma^-_1 \\
    \end{array}\right)
  \left(\begin{array}{c}
      x \\
      y \\
    \end{array}\right)
    +\sum_{i=1}^{N}(-1)^{i+1}{\gamma}^-_{i+1} y^i
    \left(\begin{array}{c}
      x \\
      y \\
    \end{array}\right)
    +G^-_{N+2}(x,y),
\end{equation*}
which is defined on the upper half plane.
Associating with Lemma~\ref{lcofs}, we get the lower half return map $(\Pi^-)^{-1}(x)=-\sum_{k=1}^{N+1}u_k^-x^k+o(x^{N+1})$,
where $u^-_1:=e^{-\gamma^-_1 \pi}$ and
\begin{equation}\label{u-k}
  u^-_k:=
 (-1)^{k}\gamma^-_{k}\tilde{C}_k^{FF}+
  \Phi(\pi; -\gamma^-_1,...,(-1)^{k-1}\gamma^-_{k-1})~~~{\rm for}~k=2,...,N+1.
\end{equation}
Here $\tilde{C}_k^{FF}$ is ${C}_k^{FF}$ with replacing $\gamma_1^+$ by $-\gamma_1^-$.
Then for displacement map $\Delta(x)$ in \eqref{DeltaF} we get $V_k=u^+_k-u^-_k$.
Clearly $V_1=e^{\gamma^+_1\pi}-e^{-\gamma^-_1\pi}$.
Moreover, by \eqref{u+k} and \eqref{u-k} we get that
$V_1=\cdots=V_{k-1}=0$ if and only if $\gamma^+_i+(-1)^{i-1}\gamma^-_i=0$ for all $i=1,...,k-1$,
which implies $\Phi(\pi;\gamma^+_1,...,\gamma^+_{k-1})=
\Phi(\pi;-\gamma^-_1,...,(-1)^{k-1}\gamma^-_{k-1})$.
Thus,
$V_k=u_k^+-u_k^-=(\gamma^+_k+(-1)^{k-1}\gamma^-_k)C_k^{FF}$ when $V_1=\cdots=V_{k-1}=0$.
This completes the proof in Step 1.

{\it Step 2: compute Lyapunov constants $V_k$ for FP normal form~\eqref{NFP}.}
For FP normal form~\eqref{NFP}, the upper half return map $\Pi^+(x)$ is given by Lemma~\ref{lcofs}.
By transformation $(x,y,t)\to(x,-y,-t)$, the lower subsystem of \eqref{NFP} is rewritten as
\begin{equation*}
  \left(\begin{array}{c}
      \dot{x} \\
      \dot{y} \\
    \end{array}
  \right)=
  \left(\begin{array}{c}
    -1 \\
    x^{2\ell^--1} \\
  \end{array}\right)+{\boldsymbol R}^-_{N}(x,y),
\end{equation*}
which is defined on the upper half plane.
Associating with $\hat{B}_{k+2\ell^--1,2\ell^-}(1,0,...,0)=0$ for $3\le k\le N+1$ and Lemma~\ref{lcops},
we obtain the lower half return map $(\Pi^-)^{-1}(x)=-x+o(x^{N+1})$.
Thus for displacement map $\Delta(x)$ in \eqref{DeltaF} we get $V_1=e^{\gamma^+_1\pi}-1$ and $V_k=u_k^+$ for $2\le k \le N+1$.
Clearly $V_1=0$ if and only if $\gamma^+_1=0$, thus we get $V_2=u_2^+=C_2^{FP}\gamma^+_2$ when $V_1=0$ by \eqref{u+k}.
Moreover, it is not hard to obtain that $V_1=\cdots =V_{k-1}=0$ if and only if $\gamma^+_i=0$, $i=1,...,k-1$.
Associated with $\Phi(\pi;0,...,0)=0$, we obtain $V_k=u_k^+=\gamma_k^+C_k^{FP}$ when $V_1=\cdots =V_{k-1}=0$.
This completes the proof in Step 2.

{\it Step 3: compute Lyapunov constants $V_k$ for PP normal form~\eqref{NPP}.}
For PP normal form~\eqref{NPP}, the upper half return map $\Pi^+(x)$ is given by Lemma~\ref{lcops}.
From last paragraph,
we get the lower half return map $(\Pi^-)^{-1}(x)=-x+o(x^{N+1})$.
Thus for displacement map $\Delta(x)$ in \eqref{DeltaF} we get $V_1=0$, $V_k=v_k^+$ for $2\le k \le N+1$,
where $v_k^+$ is given below \eqref{rtmap}.
Associated with $\hat{B}_{k+2\ell^+-1,i}(1,0,...,0)=0$ for $2\ell^+\le i\le k+2\ell-2$ and the expression of $v_k^+$,
we get $V_k=0$ for odd $k$ and $V_k=\sigma^+_k C_k^{PP}$ for even $k$ when
$V_1=\cdots=V_{k-1}=0$.
This completes the proof in Step 3.
This theorem is proved.
\end{proof}

\begin{proof}[Proof of Theorem~\ref{OFF}]
We extend the domain of the upper subsystem of \eqref{ge} with $O$ being a weak focus of order $\varsigma^+$
to whole plane. That is, in its FF normal form both the upper and the lower
ones have the form of the upper one of \eqref{NFF}.
By the expression of Lyapunov constant given in Theorem~\ref{Lya-FFPP}, it is not hard to obtain
that the first nonzero $\gamma^+_i$ is $\gamma^+_{2\varsigma^++1}$ when $0\le \varsigma^+<+\infty$,
and $\gamma^+_{2i+1}=0$ for any $i\in \mathbb{N}$ when $\varsigma^+=+\infty$.

{\it Step 1: consider the case that lower subsystem of \eqref{ge} has a weak focus of order $\varsigma^-\in \mathbb{N}\cup\{+\infty \}$.}
As the above paragraph, for its normal form we similarly get that
the first nonzero $\gamma^-_i$ is $\gamma^-_{2\varsigma^-+1}$ when $0\le \varsigma^-<+\infty$,
and $\gamma^-_{2i+1}=0$ for any $i\in \mathbb{N}$ when $\varsigma^-=+\infty$.
Thus, both $\gamma^+_1, \gamma^-_1$ are nonzero if $\varsigma^+=\varsigma^-=0$ and there is no any relation among
$\gamma_i^\pm$ ($i\ge 1$). By the expression of Lyapunov constant given in Theorem~\ref{Lya-FFPP},
the order $\varsigma$ of $O$ of system~\eqref{ge} can be any element in $\bigcup_{i=0}^{+\infty}\left\{i/2\right\}$.

If $\varsigma^+=0$ and $\varsigma^-\in \mathbb{Z}^+\cup\{+\infty\}$ then $\gamma^+_1\ne0$ and $\gamma^-_1=0$.
Associated with $\Delta(x)=\gamma_1^+x+o(x)$, we get $\varsigma=0$.
The result of the case $\varsigma^-=0$ and $\varsigma^+\in \mathbb{Z}^+\cup\{+\infty\}$ can be obtained similarly.
Conclusion {\bf (a)} is proved.

For the case of $\varsigma^{\pm}\in \mathbb{Z}^+\cup\{+\infty\}$ and $\varsigma^+\ne\varsigma^-$, letting $\varsigma^+<\varsigma^-$ without loss of generality.
Associated with Theorem~\ref{Lya-FFPP}, we get $\gamma^{\pm}_{2i+1}=0$ for $0\le i\le \varsigma^+-1$, $\gamma^+_{2\varsigma^++1}\ne0$ and $\gamma^-_{2\varsigma^++1}=0$,
and displacement function
\begin{equation}\label{df13b}
  \Delta(x)=\sum_{i=1}^{\varsigma^+}(\gamma_{2i}^+-\gamma_{2i}^-)x^{2i}+
  \gamma^+_{2\varsigma^++1}x^{2\varsigma^++1}+o(x^{2\varsigma^++1}).
\end{equation}
Thus, we obtain $\varsigma=(2i+1)/2$ for $0\le i\le \varsigma^+-1$ when
$\gamma^+_{2j}=\gamma^-_{2j}$ for all $j=1,...,i$ and $\gamma^+_{2i+2}\ne\gamma^-_{2i+2}$.
Additionally, $\varsigma=\varsigma^+$ when $\gamma^+_{2i}=\gamma^-_{2i}$ for all $1\le i\le \varsigma^+$ by \eqref{df13b}.
Conclusion {\bf (b)} is proved.

Clearly, $\gamma^{\pm}_{2i+1}=0$ ($i=0,...,\varsigma^+-1$) and $\gamma^+_{2\varsigma^++1}\gamma^-_{2\varsigma^++1}\ne0$ when $\varsigma^+=\varsigma^-\in \mathbb{Z}^+$.
By Theorem~\ref{Lya-FFPP}, we obtain
\begin{equation}\label{df13c}
  \Delta(x)=\sum_{i=1}^{\varsigma^+}(\gamma_{2i}^+-\gamma_{2i}^-)x^{2i}+
  \left(\gamma^+_{2\varsigma^++1}+\gamma^-_{2\varsigma^++1}\right)
  x^{2\varsigma^++1}+o(x^{2\varsigma^++1}).
\end{equation}
On the other hand,  $\gamma^{\pm}_{2i+1}=0$ for all $i\in \mathbb{N}$ when $\varsigma^+=\varsigma^-=+\infty$.
Then
 \begin{equation}\label{df13d}
  \Delta(x)=\sum_{i=1}^{+\infty}(\gamma_{2i}^+-\gamma_{2i}^-)x^{2i}
\end{equation}
by Theorem~\ref{Lya-FFPP}.
Same as proofs of conclusions {\bf (a)} and {\bf (b)}, conclusion {\bf (c)} can be proved associated with \eqref{df13c} and \eqref{df13d}.

{\it Step 2: consider the case that lower subsystem of \eqref{ge} has an invisible tangent point.}
Since $\gamma^+_1\ne0$ when $\varsigma^+=0$ and $\Delta(x)=(e^{\gamma^+_1\pi}-1)x+o(x)$,
we get $\varsigma=0$.
Conclusion {\bf (d)} is proved.
Conclusion {\bf (e)} can be proved similarly as conclusion {\bf (b)} and hence we omit it here.
\end{proof}


\section{Corollaries and discussions }
\setcounter{equation}{0}
\setcounter{lm}{0}
\setcounter{thm}{0}
\setcounter{rmk}{0}
\setcounter{df}{0}
\setcounter{cor}{0}

In this section, we give some interesting corollaries and discussions related to normal forms and Lyapunov constants.

An analytic system $(\dot{x},\dot{y})^\top={\boldsymbol X}(x,y)$ can be treated as a piecewise-smooth
system~\eqref{ge} with two same subsystems, so we can get the following corollary directly from Theorem~\ref{NFFFPP} and
omit its proof here.

\begin{cor}
An analytic system $(\dot{x},\dot{y})^\top={\boldsymbol X}(x,y)$ having a weak focus at the origin
with  eigenvalues $\alpha \pm \beta i$ can be
rewritten as system~\eqref{nffs} for any positive integer $N$ by a diffeomorphism~\eqref{dhfs} and a time scaling~\eqref{tsfs}.
  \label{smnf}
\end{cor}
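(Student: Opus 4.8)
The plan is to obtain this as an immediate specialisation of Lemma~\ref{niofs}. First I would view the analytic system $(\dot x,\dot y)^\top={\boldsymbol X}(x,y)$ as the degenerate instance of the piecewise-smooth system~\eqref{ge} in which the two subsystems coincide, that is, $(X^+,Y^+)=(X^-,Y^-)$ are the components of ${\boldsymbol X}$. Its ``upper subsystem'' is then simply ${\boldsymbol X}$ restricted to $y\ge0$, and by hypothesis it has a weak focus at $O$ with eigenvalues $\alpha\pm\beta i$; this is precisely the hypothesis of Lemma~\ref{niofs}. That lemma therefore yields, for every positive integer $N$, a diffeomorphism of the form~\eqref{dhfs} transforming the system into the intermediate form~\eqref{nfN}, followed by a time scaling of the form~\eqref{tsfs} carrying~\eqref{nfN} into~\eqref{nffs}, with $\gamma_1=\alpha/\beta$.

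The only point that needs a word of justification is that the transformations produced in the proof of Lemma~\ref{niofs}, which in the piecewise context were only required on the half-plane $y\ge0$, are in fact valid on a full two-sided neighbourhood of $O$ here. This is clear: the linear map~\eqref{tl} and the near-identity map~\eqref{nifs} constructed there are polynomial in $(x,y)$, the time scaling~\eqref{tsfs} is polynomial in $y$, and since ${\boldsymbol X}$ is analytic across $y=0$ the remainders written $o(|(x,y)|^{N+1})$ in~\eqref{nfN} and~\eqref{nffs} are genuine analytic functions on a neighbourhood of $O$. Hence the order-by-order reduction of the terms of degree at most $N+1$ goes through verbatim, and no continuity or matching step (Lemma~\ref{niih}) is needed, because a single global change of variables and time is used rather than a piecewise one. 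Equivalently, one may invoke Step~1 of the proof of Theorem~\ref{NFFFPP}, in which the piecewise transformation~\eqref{whff} collapses to one diffeomorphism once the two subsystems agree.

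I do not expect any real obstacle beyond this bookkeeping; the ``hard part'' is merely checking that the half-plane constructions of Lemma~\ref{niofs} extend to a two-sided neighbourhood, which is transparent in the analytic setting. For a fully self-contained argument one would just repeat the two steps of the proof of Lemma~\ref{niofs}: solve the homological equations~\eqref{ffe} recursively in $k$, which is possible by the explicit formulas~\eqref{pqm1}--\eqref{pqm} (with the free constants $r_{k,0}$ set to zero), so as to reach~\eqref{nfN}; and then determine $T_1,\dots,T_N$ recursively from $\tilde\eta_2=\cdots=\tilde\eta_{N+1}=0$ to remove the off-diagonal nonlinear coefficients and land on~\eqref{nffs}.
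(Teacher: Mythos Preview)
Your proposal is correct and essentially matches the paper's treatment: the paper simply remarks that an analytic system is the piecewise-smooth system~\eqref{ge} with the two subsystems identical and then invokes Theorem~\ref{NFFFPP} without further detail, whereas you go straight to Lemma~\ref{niofs} (which is what underlies the FF case of Theorem~\ref{NFFFPP}) and note that its polynomial constructions are valid on a two-sided neighbourhood. These are the same argument; your version just unpacks the one step the paper leaves implicit.
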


According to Theorem~\ref{Lya-FFPP}, for \eqref{nffs} we get $V_1=e^{\alpha\pi}-e^{-\alpha\pi}$, $V_k=0$ for even $k$ and $V_k= 2\gamma_{k}C_{k}^{FF}$ for odd $k\ge 3$.
Thus we obtain that the index of first nonzero Lyapunov constant is always odd, which is consistent with previous result (see \cite{KUZ,ROM,ZZ})
for analytic system  $(\dot{x},\dot{y})^\top={\boldsymbol X}(x,y)$. That is, Corollary~\ref{smnf} provides a new method to compute Lyapunov constants for analytic systems by reducing to a new normal form.

It is worth mentioning that previous normal form for $(\dot{x},\dot{y})^\top={\boldsymbol X}(x,y)$ as
\begin{equation}\label{snff}
  \left(\begin{array}{c}
      \dot{x} \\
      \dot{y} \\
    \end{array}\right)
    =\left(\begin{array}{cc}
      \alpha & -\beta \\
      \beta & \alpha \\
    \end{array}\right)
  \left(\begin{array}{c}
      x \\
      y \\
    \end{array}\right)
    +\sum_{k=1}^{N}(x^2+y^2)^k
    \left(\begin{array}{cc}
      \gamma_{2k+1} & -\eta_{2k+1} \\
      \eta_{2k+1} & \gamma_{2k+1} \\
    \end{array}\right)\left(\begin{array}{c}
      x \\
      y \\
    \end{array}\right)
    +o\left(|(x,y)|^{2N+1}\right)
\end{equation}
is obtained by a linear transformation and a near-identity (see \cite{GUC,Li}).
Moreover, for Lyapunov constants $V_k$ we have $V_1=e^{2\alpha \pi}-1$, $V_k=0$ for even $k$ and $V_k$ are algebraically equivalent to $\gamma_{2k+1}$ for odd $k$ and $V_1=\cdots=V_{k-1}=0$.
Note that time scaling is not required in reducing to normal form~\eqref{snff}.
However, we usually get normal form~\eqref{snff} with the help of corresponding complex system,
and the conversion between real and complex systems may cause more computations.
It is not hard to see that normal form~\eqref{nffs} can be obtained directly in the real coordinate by Lemma~\ref{niofs}.


\begin{cor}
The origin $O$ is a center of truncation of FF normal form~\eqref{NFF}
 \begin{align}\label{FFN}
		\left( \begin{array}{c}
		\dot{x}\\
		\dot{y}\\
	\end{array} \right) =\left\{
    \begin{aligned}
&		\left(\begin{array}{cc}
       \gamma_1^+ & -1 \\
       1 & \gamma_1^+ \\
     \end{array}\right)
     \left(\begin{array}{c}
         x \\
         y \\
       \end{array}\right)+
       \sum_{i=1}^{N}\gamma_{i+1}^+y^k\left(\begin{array}{c}
         x \\
         y \\
       \end{array}\right)~~~~&&\mathrm{if}~y>0,\\
&		\left(\begin{array}{cc}
       \gamma_1^- & -1 \\
       1 & \gamma_1^- \\
     \end{array}\right)
     \left(\begin{array}{c}
         x \\
         y \\
       \end{array}\right)+
       \sum_{i=1}^{N}\gamma_{i+1}^-y^k\left(\begin{array}{c}
         x \\
         y \\
       \end{array}\right)   && \mathrm{if}~y<0
	\end{aligned} \right.
\end{align}
if and only if $\gamma_{k}^+-(-1)^k\gamma_{k}^-=0, k=1,...,N+1$,
 a center of truncation of FP normal form~\eqref{NFP}
\begin{align}\label{FPN}
		\left( \begin{array}{c}
		\dot{x}\\
		\dot{y}\\
	\end{array} \right) =\left\{
    \begin{aligned}
&		\left(\begin{array}{cc}
       \gamma_1^+ & -1 \\
       1 & \gamma_1^+ \\
     \end{array}\right)
     \left(\begin{array}{c}
         x \\
         y \\
       \end{array}\right)+
       \sum_{i=1}^{N}\gamma_{i+1}^+y^i\left(\begin{array}{c}
         x \\
         y \\
       \end{array}\right)~~~~&&\mathrm{if}~y>0,\\
&	   \left(\begin{array}{c}
         1 \\
         x^{2\ell^--1}\\
       \end{array}\right)  && \mathrm{if}~y<0
	\end{aligned} \right.
\end{align}
if and only if $\gamma_k^+=0, k=1,...,N+1$,
 a center of truncation of PP normal form~\eqref{NPP}
\begin{align}\label{PPN}
		\left( \begin{array}{c}
		\dot{x}\\
		\dot{y}\\
	\end{array} \right) =\left\{
    \begin{aligned}
&		\left(\begin{array}{c}
         -1 \\
         x^{2\ell^+-1}+\sum\limits_{i=1}^{N}\sigma_{i+1}^+x^{2\ell^+-1+i}\\
       \end{array}\right)
        && \mathrm{if}~y>0,\\
&		      \left(\begin{array}{c}
         1 \\
         x^{2\ell^--1}\\
       \end{array}\right) && \mathrm{if}~y<0
	\end{aligned} \right.
\end{align}
if and only if $\sigma^+_{2k}=0, k=1,...,\lfloor(N+1)/2\rfloor$.
\end{cor}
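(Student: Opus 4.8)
The plan is to characterise a center of a truncated normal form through its displacement map $\Delta(x)=\sum_{k\ge1}V_kx^k$ from \eqref{DeltaF}: the origin is a center of the truncation exactly when $\Delta\equiv0$, i.e. when every Lyapunov constant vanishes. For the \emph{necessity} of the stated conditions I would read off the coefficients inductively from Theorem~\ref{Lya-FFPP}. For \eqref{FFN}: $V_1=e^{\gamma_1^+\pi}-e^{-\gamma_1^-\pi}=0$ gives $\gamma_1^++\gamma_1^-=0$, i.e. $\gamma_1^+-(-1)^1\gamma_1^-=0$; assuming $\gamma_j^+-(-1)^j\gamma_j^-=0$ for $j<k$ (equivalently $V_1=\cdots=V_{k-1}=0$), Theorem~\ref{Lya-FFPP} gives $V_k=\bigl(\gamma_k^++(-1)^{k-1}\gamma_k^-\bigr)C_k^{FF}$ with $C_k^{FF}>0$, so $V_k=0$ forces $\gamma_k^+-(-1)^k\gamma_k^-=0$ (since $(-1)^{k-1}=-(-1)^k$). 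The same induction, using positivity of $C_k^{FP}$ and $C_k^{PP}$, yields $\gamma_k^+=0$ ($1\le k\le N+1$) for \eqref{FPN} — from $V_1=e^{\gamma_1^+\pi}-1=0$ and $V_k=\gamma_k^+C_k^{FP}$ — and $\sigma_{2k}^+=0$ ($1\le k\le\lfloor(N+1)/2\rfloor$) for \eqref{PPN} — the odd-index $V_k$ vanishing automatically and $V_{2k}=\sigma_{2k}^+C_{2k}^{PP}$.

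For the \emph{sufficiency} direction in the FF case, I would rewrite the two subsystems of \eqref{FFN} as $\dot x=-y+xP^\pm(y)$, $\dot y=x+yP^\pm(y)$ with $P^\pm(y)=\sum_{i=0}^{N}\gamma_{i+1}^\pm y^i$. The hypothesis $\gamma_k^+-(-1)^k\gamma_k^-=0$ is equivalent to $\gamma_{i+1}^-=(-1)^{i+1}\gamma_{i+1}^+$, i.e. to $P^-(y)=-P^+(-y)$, and a direct check shows this is exactly the condition for the piecewise field \eqref{FFN} to be invariant under the involution $(x,y,t)\mapsto(x,-y,-t)$. Since this reversal fixes the switching line pointwise and $O$ is a monodromic singular point of the truncation with orbits uniquely defined near $O$, it maps the orbit through each $(x_0,0)$, $0<x_0\ll1$, onto itself; hence that orbit is closed, $\Pi^-\circ\Pi^+=\mathrm{id}$, and $O$ is a center.

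For the FP and PP cases I would instead compute the half-return maps explicitly. The lower subsystem $\dot x=1,\ \dot y=x^{2\ell^--1}$ has the first integral $y-x^{2\ell^-}/(2\ell^-)$, so the orbit through $(x_0,0)$ with $x_0<0$ stays in $y<0$ and returns to the switching line at the remaining (even-power) root $x=-x_0$; hence $(\Pi^-)^{-1}(x)\equiv-x$. When $\gamma_k^+=0$ for $1\le k\le N+1$, the upper subsystem of \eqref{FPN} reduces to $\dot x=-y,\ \dot y=x$, a half-turn of which gives $\Pi^+(x)\equiv-x$, so $\Delta\equiv0$. When $\sigma_{2k}^+=0$ for $1\le k\le\lfloor(N+1)/2\rfloor$, the upper subsystem of \eqref{PPN} is $\dot x=-1,\ \dot y=Q(x)$ with $Q(x)=\sum_{i=0}^{N}\sigma_{i+1}^+x^{2\ell^+-1+i}$ (setting $\sigma_1^+:=1$) an odd polynomial; along the orbit through $(x_0,0)$ one then has $y=\int_{x}^{x_0}Q(s)\,ds$, which is positive for $x\in(-x_0,x_0)$ and vanishes at $x=-x_0$ by oddness, so again $\Pi^+(x)\equiv-x$ and $\Delta\equiv0$. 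In all three cases $O$ is a center of the truncation.

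The main obstacle is precisely this sufficiency direction: Theorem~\ref{Lya-FFPP} only controls $V_1,\dots,V_{N+1}$, so it cannot on its own force $V_k=0$ for $k>N+1$, and it is the reversibility of the truncated \eqref{FFN} together with the explicit first integrals of the truncated \eqref{FPN} and \eqref{PPN} that actually close the gap and produce a genuine center. A minor point to verify alongside is that the positive time scalings used in Theorem~\ref{NFFFPP} do not affect whether $O$ is a center, and that the linear parts together with $\ell^\pm\ge1$ keep $O$ monodromic of the appropriate type for each truncation, so that the displacement map — and hence the statement — is well posed.
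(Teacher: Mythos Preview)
Your argument is correct. The necessity direction and the FF sufficiency (via the time-reversing symmetry $(x,y,t)\mapsto(x,-y,-t)$) coincide with the paper's proof. For the FP and PP sufficiency, however, you take a different route: you compute the half-return maps directly from the first integrals $y-x^{2\ell^-}/(2\ell^-)$ (lower piece) and from the even primitive of the odd polynomial $Q$ (upper piece), and conclude $\Pi^+(x)\equiv(\Pi^-)^{-1}(x)\equiv -x$. The paper instead observes that, under the stated conditions, the truncations \eqref{FPN} and \eqref{PPN} are invariant under the reversibility $(x,y,t)\mapsto(-x,y,-t)$, and deduces the center from that symmetry alone. Your explicit computation is really the same reversibility seen at the level of orbits (oddness of $Q$ is exactly invariance under $x\mapsto -x$), so the two approaches are equivalent in spirit; the paper's is more uniform across the three cases, while yours makes the vanishing of $\Delta$ fully transparent. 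One small remark: your closing comment about time scalings from Theorem~\ref{NFFFPP} is not needed here, since the corollary concerns the truncated normal forms themselves rather than the original system~\eqref{ge}.
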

\begin{proof}
It is not hard to show that the necessity can be obtained directly by Theorem~\ref{Lya-FFPP}.
For the sufficiency, $O$ is a center of system~\eqref{FFN} because of the symmetry under $(x,y,t)\to(x,-y,-t)$ when $\gamma_{k}^+-(-1)^k\gamma_{k}^-=0, k=1,...,N+1$.
similarly, we can get that system~\eqref{FPN} with $\gamma_k^+=0, k=1,...,N+1$ and system~\eqref{PPN} with $\sigma^+_{2k}=0, k=1,...,\lfloor(N+1)/2\rfloor$ are invariant under $(x,y,t)\to(-x,y,-t)$.
This lemma is proved.
\end{proof}

\begin{cor}
  The order of monodromic singular point $O$ of PP type of system~\eqref{ge} is always fraction, i.e.,
  index of the first non-vanishing Lyapunov constant is always even.
  \label{FNL}
\end{cor}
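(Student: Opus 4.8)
The plan is to derive this corollary directly from Theorem~\ref{Lya-FFPP}. First I would observe that, by Theorem~\ref{NFFFPP}, for \emph{every} positive integer $N$ a system~\eqref{ge} with a PP monodromic singular point is topologically equivalent near $O$ to PP normal form~\eqref{NPP}; moreover the Lyapunov constants $V_k$ in \eqref{DeltaF} are intrinsic to the original analytic system, being the Taylor coefficients at $0$ of its displacement map, so they do not change when $N$ is enlarged. Hence, to show $V_k=0$ for a prescribed odd $k$ it suffices to pick any $N\ge k-1$ and apply the PP case of Theorem~\ref{Lya-FFPP}, which gives $V_k=0$ for every odd $k$ with $1\le k\le N+1$. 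Letting $k$ range over all odd positive integers, we conclude that every odd-indexed Lyapunov constant of $O$ vanishes; in particular $V_1=0$, so $O$ is never a rough focus.

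It then remains to translate this into a statement about the order $\varsigma$ and the index of the first non-vanishing Lyapunov constant. If some $V_k\ne0$, let $V_m$ be the first non-vanishing one; by the previous paragraph $m$ must be even, which is exactly the second part of the assertion. Recalling the definition of order from Section~1, namely $V_1=\cdots=V_{2\varsigma}=0$ and $V_{2\varsigma+1}\ne0$, we get $2\varsigma+1=m$ with $m$ even, so $\varsigma=(m-1)/2\in\bigcup_{i\ge0}\{(2i+1)/2\}$ and in particular $\varsigma\notin\mathbb{Z}$. If instead all $V_k$ vanish, then $O$ is a center and $\varsigma=+\infty$. In either case $\varsigma$ is not a finite integer, which is precisely the claim that the order is always fractional.

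There is essentially no hard step in this argument: the only point that needs a moment's care is that Theorem~\ref{Lya-FFPP} is phrased only for indices up to $N+1$, so one must first record the $N$-independence of the $V_k$ — guaranteed by the analyticity of the subsystems in~\eqref{ge} together with the arbitrariness of $N$ in Theorem~\ref{NFFFPP} — before letting $k\to\infty$ to cover all odd indices at once.
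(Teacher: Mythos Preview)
Your argument is correct and matches the paper's own approach, which simply records that the corollary follows directly from Theorems~\ref{NFFFPP} and~\ref{Lya-FFPP}. One minor point of phrasing: Theorem~\ref{Lya-FFPP} gives the value of $V_k$ only under the hypothesis $V_1=\cdots=V_{k-1}=0$ (this is what ``first nonzero'' means in its statement), so it does not literally assert that \emph{every} odd-indexed $V_k$ vanishes, only that the first nonzero $V_k$ cannot have odd index---but that weaker statement is exactly what your second paragraph uses, so the conclusion is unaffected.
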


Corollary~\ref{FNL} can be obtained directly from Theorems~\ref{NFFFPP} and \ref{Lya-FFPP}, and here we omit its proof.
For (2,2)-monodromic tangential singularity, i.e., a special case of PP type,
conclusion of Corollary~\ref{FNL} is given in \cite[p.235]{FAF} by properties of involutions for the half-return maps
and in \cite{EM1} by normal form method.
For general case of PP type, Corollary~\ref{FNL} is obtained in \cite{DD}  as a consequence of property for pair of involution.
But in this paper we get Corollary~\ref{FNL} by a different method, i.e., providing a new normal form method to prove that index of first non-vanishing Lyapunov constant is always even for general case of PP type.


\begin{cor}
If the origin $O$ of system~\eqref{ge} is a center of FF type, then $O$ has the same orders for two subsystems.
If the origin $O$ of system~\eqref{ge} is a center of FP type, then $O$ is a center of the focus-side subsystem.
\label{offfp}
\end{cor}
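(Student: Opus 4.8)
The plan is to derive both assertions directly from the Lyapunov-constant formulas of Theorem~\ref{Lya-FFPP}, read off on the normal forms produced by Theorem~\ref{NFFFPP}, together with the fact — already recorded in the proof of Theorem~\ref{OFF} — that the order of the focus $O$ for an analytic subsystem brought to the form~\eqref{nffs} is detected by the vanishing pattern of its odd-indexed coefficients $\gamma_{2i+1}$. Since $O$ is a center of \eqref{ge} exactly when the displacement map $\Delta(x)$ vanishes identically, i.e. when \emph{every} Lyapunov constant is zero, in each case I would substitute $V_k=0$ for all $k$ into the relevant list of Theorem~\ref{Lya-FFPP}.

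For the FF statement I would argue inductively: $V_1=0$ forces $\gamma_1^+=-\gamma_1^-$, and once $V_1=\cdots=V_{k-1}=0$ the identity $V_k=\bigl(\gamma_k^++(-1)^{k-1}\gamma_k^-\bigr)C_k^{FF}$ with $C_k^{FF}>0$ forces $\gamma_k^++(-1)^{k-1}\gamma_k^-=0$; in particular $\gamma_{2i+1}^+=-\gamma_{2i+1}^-$ for every $i\ge 0$. Next, the upper subsystem extended to the whole plane is analytic and, by Theorem~\ref{NFFFPP} (and Corollary~\ref{smnf}), is equivalent under a homeomorphism and a time scaling — neither of which changes the focus order — to a system in the form~\eqref{nffs} with coefficients $\gamma_k^+$; likewise the lower subsystem, after the auxiliary reflection $(x,y)\mapsto(x,-y)$, which again preserves the order, is brought to~\eqref{nffs} with coefficients $\gamma_k^-$. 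Exactly as in the proof of Theorem~\ref{OFF}, Theorem~\ref{Lya-FFPP} then shows that $\varsigma^+$ (resp. $\varsigma^-$) equals the least $i\ge 0$ with $\gamma_{2i+1}^+\ne 0$ (resp. $\gamma_{2i+1}^-\ne 0$), the value being $+\infty$ when all of them vanish. Since $\gamma_{2i+1}^+=-\gamma_{2i+1}^-$, these least indices coincide, hence $\varsigma^+=\varsigma^-$, including the center case $\varsigma^+=\varsigma^-=+\infty$.

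For the FP statement the same start ($\Delta\equiv 0$, so all $V_k=0$) fed into the FP list of Theorem~\ref{Lya-FFPP} gives, inductively, $\gamma_1^+=0$ from $V_1=e^{\gamma_1^+\pi}-1=0$ and $\gamma_k^+=0$ for $k\ge 2$ from $V_k=\gamma_k^+C_k^{FP}=0$ with $C_k^{FP}>0$. Since this holds for the FP normal form of every order $N$, the focus-side (upper) subsystem, whose own FF normalization is precisely system~\eqref{nffs} with all $\gamma_k^+\equiv 0$, has every Lyapunov constant equal to zero by Corollary~\ref{smnf} and Theorem~\ref{Lya-FFPP}; hence $O$ is a center of the focus-side subsystem. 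Alternatively, both conclusions can be read off Theorem~\ref{OFF} itself: a center of \eqref{ge} has $\varsigma=+\infty$, which is incompatible with the conclusions of cases (a), (b) and (d), so it forces $\varsigma^+=\varsigma^-$ in case (c) and $\varsigma^+=+\infty$ in case (e).

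The two inductions and the sign bookkeeping are routine. The only step that needs care, and which I view as the real obstacle, is the passage in the FF paragraph from the algebraic relations among the $\gamma_k^\pm$ to the equality of the orders: one must confirm that replacing each subsystem by its whole-plane extension, applying the homeomorphism and time scaling of Theorem~\ref{NFFFPP}, and, for the lower side, the reflection, all leave the focus order unchanged, and that for an analytic system in the form~\eqref{nffs} the even-indexed coefficients are irrelevant to the order because $V_{2j}\equiv 0$ — all of which is already established in Corollary~\ref{smnf} and in the proofs of Theorems~\ref{Lya-FFPP} and~\ref{OFF}, so no new computation is required.
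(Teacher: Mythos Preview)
Your proposal is correct, and in fact you supply two valid routes. The paper's own proof is exactly your ``alternative'': it simply says that the corollary can be read off Theorem~\ref{OFF} (a center means $\varsigma=+\infty$, which is excluded in cases \textbf{(a)}, \textbf{(b)}, \textbf{(d)} and forces $\varsigma^+=\varsigma^-$ in \textbf{(c)} and $\varsigma^+=+\infty$ in \textbf{(e)}) and omits all details.

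Your primary argument---pushing $V_k=0$ for all $k$ through the explicit formulas of Theorem~\ref{Lya-FFPP} to obtain $\gamma_k^+ +(-1)^{k-1}\gamma_k^-=0$ (FF) or $\gamma_k^+=0$ (FP), and then translating this into a statement about the subsystem orders via Corollary~\ref{smnf}---is essentially the content of the proof of Theorem~\ref{OFF} specialised to $\varsigma=+\infty$. The one point you correctly flag as needing care (that the coefficients $\gamma_k^+$ appearing in the FP normal form, which uses $r^+_{k,0}=r^-_{k,0}$ rather than $r^+_{k,0}=0$, still control the order of the upper subsystem) is handled by the observation that any diffeomorphism of the form~\eqref{dhfs} together with the time scaling~\eqref{tsfs} preserves the focus order, so the upper subsystem is equivalent to some system~\eqref{nffs} with those $\gamma_k^+$, and Corollary~\ref{smnf} then gives $\varsigma^+=+\infty$. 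So your longer route is sound but buys nothing over the one-line appeal to Theorem~\ref{OFF}.
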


Corollary~\ref{offfp} can be obtained directly by Theorem~\ref{OFF} and here we omit its proof. Corollary~\ref{offfp}
provides necessary conditions for $O$ to be a center of FF type or FP type, which usually help us find center conditions.

In \cite{CX}, system~\eqref{ge} with a monodromic singular point $O$ of FF type is reduced to
\begin{align}\label{cxnf}
  \left(
    \begin{array}{c}
      \dot{x} \\
      \dot{y} \\
    \end{array}
  \right)=\left\{\begin{aligned}
  &\left(
    \begin{array}{cc}
      \alpha^+ & \beta^+ \\
      -\beta^+ & \alpha^+ \\
    \end{array}
  \right)\left(
  \begin{array}{c}
             x \\
             y \\
  \end{array}\right)+\left(
  \begin{array}{c}
      \gamma^+(x^2+y^2) \\
      \eta^+(x^2+y^2) \\
    \end{array}
  \right)+o(|(x,y)|^2)~~~~~&&\textrm{if}~y>0,\\
  &\left(
    \begin{array}{cc}
      \alpha^- & \beta^- \\
      -\beta^- & \alpha^- \\
    \end{array}
  \right)\left(
  \begin{array}{c}
             x \\
             y \\
  \end{array}\right)+\left(
  \begin{array}{c}
      \gamma^-(x^2+y^2) \\
      0 \\
    \end{array}
  \right)+o(|(x,y)|^2)~~~~~&&\textrm{if}~y<0
  \end{aligned}\right.
\end{align}
by homeomorphism~\eqref{geni}.
Additionally, for normal form \eqref{cxnf} the first Lyapunov constant is $V_1=e^{-\pi\alpha^+/\beta^+}-e^{\pi\alpha^-/\beta^-}$ and the second one $V_2$ is algebraically equivalent to $-\eta^+/\beta^+$ when $V_1=0$.
For the convenience of calculating Lyapunov constants, it is better to choose a normal form satisfying that
$k$-th Lyapunov constant $V_k$ depends only on the coefficients of $k$-th terms as in classic normal form theory when $V_1=\cdots=V_{k-1}=0$.
However, \eqref{cxnf} does not have such property because it is not hard to obtain that $\gamma^{\pm}$ in second term of
\eqref{cxnf} appear in fourth Lyapunov constant $V_4$ when $V_1=V_2=V_3=0$ and can not be further eliminated by homeomorphism~\eqref{geni}.
In this paper, we reduce second order term of system~\eqref{ge} to $\gamma_2^{\pm}y(x,y)^\top$ for $\pm y>0$ to replace the
second order term of \eqref{cxnf}.
Clearly, parameters $\gamma_2^{\pm}$ in new normal form \eqref{NFF} can only influence the second Lyapunov constant $V_2$ by Theorem~\ref{Lya-FFPP},
not $V_k$ ($k\ge 3$). Moreover, following \eqref{NFF} we can get laconic expression of each $V_k$ ($k\ge 1$) in Theorem~\ref{Lya-FFPP}.
Therefore, for FF type we prefer the new normal form \eqref{NFF} given in Theorem~\ref{NFFFPP}.

From \cite{EM}, system~\eqref{ge} with a monodromic singular point $O$ of PP type can be reduced to
\begin{align}
		\left( \begin{array}{c}
		\dot{x}\\
		\dot{y}\\
	\end{array} \right) =
		\left(\begin{array}{c}
         \mp 1\\
         x^{2\ell^{\pm}-1}+\sum\limits_{i=1}^{N}\sigma_{i+1}^{\pm}
         x^{2\ell^{\pm}-1+i}\\
       \end{array}\right)+
      {\boldsymbol R}^{\pm}_{N}(x,y)~~~~~~~\mathrm{if}~\pm y>0
\label{EMPP}
\end{align}
for any positive $N$, where $\sigma_{i+1}^{\pm}=0$ if $2\ell^{\pm}\mid i$.
Indeed, system~\eqref{EMPP} is obtained from the normal form given in \cite[Theorem 1]{EM} by transformation $(x,y,t)\to(y,x,-t)$ and letting $\sigma_{i+1}^{\pm}:=-\mu^{\pm}_{i+2\ell^{\pm}-1}$.
By \cite[Proposition 6]{EM},
the Lyapunov constants for system~\eqref{EMPP} are $V_1=0$ and
\begin{equation}
V_k=\left\{
  \begin{aligned}
  &\frac{2\sigma_2^+}{2+2\ell^+}-\frac{2\sigma_2^-}{2+2\ell^-}
  ~~~~&&{\rm if}~k=2,\\
  &(1\!+\!(-1)^k)\left(\frac{\sigma_k^+}{k+2\ell^+-1}\!-\!
  \frac{\sigma_k^-}{k+2\ell^--1}\right)\!+\!\widetilde\Psi(\sigma_2^\pm,...,\sigma_{k-1}^\pm)&&{\rm if}~k\ge 3,
  \end{aligned}\right.
  \label{vkex}
\end{equation}
but the expression of $\widetilde\Psi(\sigma_2^\pm,...,\sigma_{k-1}^\pm)$ is not given.
According to Lemma~\ref{lcops} of this paper, $V_k$ takes form \eqref{vkex} and $\widetilde\Psi(\sigma_2^\pm,...,\sigma_{k-1}^\pm)=
\Psi(\sigma_2^+,...,\sigma_{k-1}^+)
  -\Psi(\sigma_2^-,...,\sigma_{k-1}^-)$, where $\Psi$ is explicitly given below \eqref{rtmap}.
Since $\Psi(\sigma_2^+,...,\sigma_{k-1}^+)
  -\Psi(\sigma_2^-,...,\sigma_{k-1}^-)$ is complicated and it is not hard to check that it does not vanish
when $V_1=\cdots=V_{k-1}=0$, for system~\eqref{EMPP} it is difficult to judge $V_k$ vanish or not when $V_1=\cdots=V_{k-1}=0$.
Therefore, we are motivated to provide a PP normal form which has more simpler form of Lyapunov constants.
Note that in our normal form~\eqref{NPP}, the terms $x^{2\ell^--1+i}$ have been further eliminated.
Thus, although $\sigma^+_{i+1}$ of \eqref{NPP} may not be zero when $2\ell^{\pm}\mid i$,
the first nonzero Lyapunov constant $V_k$ can be directly obtained and only depend on $\sigma^+_k$ as shown in Theorem~\ref{Lya-FFPP}.
In the following we explain the reason why we can get a simpler normal form \eqref{NPP} than normal form \eqref{EMPP}.
The key to eliminate more terms in the lower subsystem is that the homeomorphism~\eqref{geni} we used in this paper
allows points on the switching line to change, but homeomorphism used in \cite{EM1, EM} does not.
Note that such change does not influence homeomorphism as proved in Lemma~\ref{niih}.
More precisely, from step 3 of the proof of Theorem~\ref{NFFFPP} we get that the lower half map of homeomorphism~\eqref{geni} changes the lower subsystem of \eqref{ge} into system~\eqref{npN} and transforms point $(x,0)$ to $\left(x+\sum_{k=2}^{N+1}r^-_{k,0}x^{k},0\right)$.
Thanks to the special form of system~\eqref{npN}, it can be further reduced to system~\eqref{nfps} by time scaling~\eqref{tsps}.
That is, terms $x^{2\ell^--1+i}$ in the lower subsystem of \eqref{EMPP} have been eliminated by our approach.
Clearly, the upper half map of homeomorphism~\eqref{geni} must make same change for the points on the switching line by Lemma~\ref{niih}.
Thus the upper subsystem of \eqref{ge} can only be reduced to system~\eqref{npNns} by homeomorphism~\eqref{geni},
and then from time scaling~\eqref{tsfpp} we get the upper subsystem of PP normal form~\eqref{NPP}.


\section{Applications}

\setcounter{equation}{0}
\setcounter{lm}{0}
\setcounter{thm}{0}
\setcounter{rmk}{0}
\setcounter{df}{0}
\setcounter{cor}{0}

In this section, we apply our main results to center problem or to several systems investigated in previous publications to correct some clerical errors.

\begin{exam}
  Consider the center problem of the following piecewise-smooth quadratic system
\begin{align}\label{exqs}
  \left(
    \begin{array}{c}
      \dot{x} \\
      \dot{y} \\
    \end{array}
  \right)=\left\{\begin{aligned}
  &\left(
  \begin{array}{c}
      -y+p_{20}x^2+p_{11}xy+p_{02}y^2 \\
      x+q_{20}x^2+q_{11}xy+q_{02}y^2 \\
    \end{array}
  \right)~~~~~&&{\rm if}~y>0,\\
  &\left(
  \begin{array}{c}
             -y \\
              x \\
  \end{array}\right)~~~~~&&{\rm if}~y<0.
  \end{aligned}\right.
\end{align}
\end{exam}
Clearly, the origin $O$ of system~\eqref{exqs} is a monodromic singular point of FF type.
It is stated in \cite[Theorem 3.1]{GAS} that the center variety of system~\eqref{exqs} is decomposed into five components, and the fifth one
is
$${\rm (v):}~~ 2p_{11}q_{20}+3p_{20}^2-2q_{20}^2=2q_{11}+5p_{20}=8p_{02}q_{20}^2-3p_{20}^2+8q_{20}^2=4q_{20}^2+4q_{02}q_{20}-3p_{20}^2=0.$$
However, it is not hard to check that $O$ is not a center by simulations when $p_{20}=q_{11}=0$ and $p_{11}=-p_{02}=q_{20}=-q_{02}=1$, which satisfies (v). Actually,
by our Theorems~\ref{NFFFPP}, \ref{Lya-FFPP}
we find that the fifth center condition (v) has
clerical errors and should be corrected as
\begin{equation}
  p_{11}+2q_{02}+q_{20}=2q_{11}+5p_{20}=2p_{02}q_{20}-p_{20}q_{02}
  +p_{20}q_{20}=4q_{20}^2+4q_{02}q_{20}-3p_{20}^2=0.
  \label{ccqs5}
\end{equation}
In fact, 
we get $V_1\equiv 0$ and
\begin{align*}
  V_2&=\frac{2}{3}(p_{11}+2q_{02}+q_{20}),
  &&V_3=\!-\!\frac{\pi}{8}
  (p_{02}q_{20}\!+\!2p_{20}q_{02}\!+\!3p_{20}q_{20}\!+\!q_{11}q_{02}\!+\!q_{11}q_{20}),\\
  V_4&=\frac{2}{15}
  q_{20}(6p_{02}p_{20}-p_{11}^{2}+3p_{20}^{2}+q_{20}^{2}),
  &&V_5=\frac{\pi}{64}
  p_{20}q_{20}(p_{20}^{2}-2q_{11}p_{20}-8q_{02}q_{20}-8q_{20}^{2}),\\
  V_6&=-\frac{8}{315}q_{20}q_{02}^{2}(4q_{20}^{2}+4q_{02}q_{20}-3p_{20}^{2}).
\end{align*}
Here $V_{i+1}$ is expressed as above under $V_1=\cdots=V_i=0$.
Solving $\{V_2=\cdots=V_6=0\}$, we obtain the first four components as given in \cite[Theorem 3.1]{GAS} and the fifth
one stated in \eqref{ccqs5}.
It is not hard to verify that system~\eqref{exqs} with condition \eqref{ccqs5} has a first
integral given by
\begin{equation*}
  H(x,y)=\left\{\begin{aligned}
  &(2q_{20}x\!-\!p_{20}y\!-\!2)^{2}\left((3 p_{20}^{2}\!-\!8q_{20}^{2}) y^{2}\!-\!12p_{20}q_{20}xy\!-\!4p_{20}y\!+\!(2q_{20}x\!+\!2)^{2}\right)~~~&&{\rm if}~y\ge0,\\
  &4q_{20}^2(x^2+y^2)-4&&{\rm if}~y\le0,
  \end{aligned}\right.
\end{equation*}
and hence $O$ is a center by \cite[Proposition 2.1]{CX1} since $H(-x,0)=H(x,0)$.

\begin{exam}
  Consider the center problem of the piecewise-smooth system
  \begin{align}\label{exsbs}
  \left(
    \begin{array}{c}
      \dot{x} \\
      \dot{y} \\
    \end{array}
  \right)=\left\{\begin{aligned}
  &\left(
  \begin{array}{c}
      \delta x-y-a_3x^2+(2a_2+a_5)xy+a_6y^2 \\
      x+\delta y+a_2x^2+(2a_3+a_4)xy-a_2y^2 \\
    \end{array}
  \right)~~~~~&&{\rm if}~y>0,\\
  &\left(
  \begin{array}{c}
      1+(b_1+b_3)x+b_2y \\
      x+(b_1-b_3)y \\
  \end{array}\right)~~~~~&&{\rm if}~y<0.
  \end{aligned}\right.
\end{align}
\end{exam}

Clearly, the origin $O$ is a monodromic singular point of FP type.
According to Corollary~\ref{offfp}, to obtain center conditions of system~\eqref{exsbs} we only need to
compute Lyapunov constants under the four classes of center conditions of the upper subsystem which are obtained in \cite{Bau}
as
\begin{eqnarray*}
\begin{aligned}
&{\rm (I)}: ~~\delta=a_4=a_5=0,~~~~&&{\rm (II)}: ~~\delta=a_3-a_6=0,\\
&{\rm (III)}: ~~\delta=a_5=a_4+5a_3-5a_6=a_3a_6-2a_6^2-a_2^2=0, &&{\rm (IV)}: ~~\delta=a_2=a_5=0.
\end{aligned}
\end{eqnarray*}
In the following, Lyapunov constant $V_k$ can be obtained by our Theorems~\ref{NFFFPP}, \ref{Lya-FFPP} and
simplified by $V_1=\cdots=V_{k-1}=0$.
For the case (I),
we obtain $V_1=V_3\equiv 0$ and
\begin{equation*}
  V_2=\frac{2 a_{2}}{3}+\frac{4 b_{1}}{3},~~~~~~~~~~~
  V_4=\frac{a_{2} (9 a_{2}^{2}-4 b_{3}^{2}-4 b_{2})}{30}.
\end{equation*}
Solving $\{V_1=...=V_4=0\}$, we can obtain some necessary center conditions. Similarly, for the cases (II), (III), (IV) we
also obtain some necessary center conditions respectively. Finally, summarizing all the these necessary conditions we get
that if the origin $O$ is a center of system~\eqref{exsbs}, then  one of the following eight conditions holds:
\begin{eqnarray}
\begin{aligned}
&{\rm (C1)}: ~\delta=a_4=a_5=a_2+2b_1=9b_1^2-b_3^2-b_2=0, &&
\\
&{\rm (C2)}: ~\delta=a_3=a_6=a_2+a_5=b_1=0, &&\\
&{\rm (C3)}: ~\delta=a_2=a_5=b_1=0, &&
\\
&{\rm (C4)}: ~\delta=a_2=a_3-a_6=a_5+2b_1=a_6a_4+3a_6^2+b_1^2-b_3^2-b_2=0,&&\\
&{\rm (C5)}: ~\delta=a_3=a_6=a_2+a_5+2b_1=a_2^2-2a_2b_1+b_1^2-b_3^2-b_2=0, &&\\
&{\rm (C6)}: ~\delta=a_3=a_6=4a_2+a_5=2b_1-3a_2=b_1^2-9b_3^2-9b_2=0, &&\\
&{\rm (C7)}: ~\delta=a_3-a_6=4a_2+a_5=2b_1-3a_2=a_4+4a_6=b_1^2-9b_3^2-9b_2=0,&&\\
&{\rm (C8)}: ~\delta=a_3=a_5=a_6=a_2+2b_1=9b_1^2-b_3^2-b_2=0.&&
\end{aligned}
\label{eightC}
\end{eqnarray}
We claim that conditions (C1), ..., (C8) in \eqref{eightC} are sufficient for $O$ to be a center of system~\eqref{exsbs}.
In fact, the first integral $H^+(x,y)$ of the upper subsystem under \eqref{eightC} can be found in \cite{CX0,TIAN}.
It is not hard to compute that the first integral of the lower subsystem can be given by $H^-(x,y)=1-\omega x^{2}+2 \omega (x b_{3}+1) y+b_{2} \omega y^{2}$
for $b_1=0$ and
\begin{equation*}
  H^-(x,y)\!=\!\left\{\begin{aligned}
  &\!(1\!+\!\alpha^+x\!+\!\alpha^+(\sqrt{\omega}\!-\!b_{3})y)^{\frac{\alpha^- }{\sqrt{\omega}}}(1\!-\!\alpha^-x\!+\!\alpha^-(\sqrt{\omega}\!+\!b_{3})y)^
  {\frac{\alpha^+}{\sqrt{\omega}}}&&{\rm if}~\omega>0~{\rm and}~ \gamma\ne0,\\
  &\!(1\!+\!2b_1x\!+\!2b_1(b_1\!-\!b_3)y)\exp(2b_1((b_1\!+\!b_3)y\!-\!x))&&{\rm if}~\omega>0~{\rm and}~\gamma=0,\\
  &\!(1\!+\!b_1x\!-\!b_1b_3y)\exp\left(\frac{-x b_{1}^{2}\!-\!b_{1}\!+\!b_{3}}{(1\!+\!b_1x\!-\!b_1b_3y) b_{3}}\right)&&{\rm if}~\omega=0~{\rm and}~b_3\ne0,\\
  &\!(1\!+\!b_{1}x)\exp\left(\frac{1\!-\!b_{1}^{2} y}{1\!+\!b_{1}x}\right)&&{\rm if}~\omega=b_3=0,\\
  &\!(1\!+\!2b_1x\!+\!\gamma x^2\!-\!2(b_3\gamma x\!-\!b_1b_3\!+\!\omega)y\!-\!b_2\gamma y^2)\exp\left(\varphi(x,y)\right)&&{\rm if}~\omega<0
  \end{aligned}\right.
\end{equation*}
for $b_1\ne0$,
where $\omega:=b_{3}^{2}+b_{2}, \alpha^{\pm}:=\sqrt{\omega}\pm b_1, \gamma:=b_1^2-\omega, \eta:=(b_3-b_1)/\gamma$ and
\begin{equation*}
  \varphi(x,y):=\left\{\begin{aligned}
  &\frac{2b_1}{\sqrt{-\omega}}\left(\arctan\left(\frac{b_3}{\sqrt{-\omega}}\!+\!
  \frac{b_2(\gamma y\!-\!1)}{\sqrt{-\omega}
  (\gamma x\!+\!b_1\!-\!b_3)}\right)+\frac{{\rm sgn}(\eta\!-\!x)\pi}{2} \right)~~~~&&{\rm for}~x\ne\eta,\\
  &0&&{\rm for}~x=\eta.
  \end{aligned}\right.
\end{equation*}
For any conditions in \eqref{eightC}, we obtain that either there exists constants $K$ and $C$ such that $H^+(x,0)\equiv K H^-(x,0)+C$ , or both $H^{\pm}(x,0)$ are even functions, or $H^{\pm}(x,0)=H^{\pm}(\rho,0)$ have common
zeros $x(\rho)$ satisfying $x(\rho)\to 0^-$ as $\rho\to 0^+$.
Therefore, (C1), ..., (C8) in \eqref{eightC} are necessary
and sufficient conditions for $O$ to be a center of system~\eqref{exsbs}.

\begin{exam}
  Let $k^{\pm}$ be any positive integers and consider system
  \begin{align}\label{exkm}
  \left(
    \begin{array}{c}
      \dot{x} \\
      \dot{y} \\
    \end{array}
  \right)=\left\{\begin{aligned}
  &\left(
  \begin{array}{c}
      1 \\
      -x^{2k^+-1}(\lambda x+1) \\
    \end{array}
  \right)~~~~~&&{\rm if}~y>0,\\
  &\left(
  \begin{array}{c}
             -1 \\
              x^{2k^--1}(x-1)\\
  \end{array}\right)~~~~~&&{\rm if}~y<0.
  \end{aligned}\right.
\end{align}
\end{exam}

For system~\eqref{exkm}, the origin $O$ is a monodromic singular point of PP type and the Hopf bifurcations of \eqref{exkm} are investigated in \cite[Example 1]{DD}.
However, the Lyapunov constant $V_2$ has a clerical error in \cite[Example 1]{DD} and then
$V_4$ is also wrong. This leads to a wrong bifurcation value of $\lambda$ in the Hopf bifurcations.
In fact, from our normal form method it is not hard to obtain that the first four Lyapunov constants are given by $V_1=V_3\equiv 0$ and
\begin{equation*}
  V_2=-\frac{2\lambda}{1+2k^+}-\frac{2}{1+2k^-},~~~
  V_4=\frac{4(k^+-k^-)(2k^++2k^-+3)}{3(1+2k^-)^3}.
\end{equation*}
Solving $V_2=V_4=0$ we obtain $k^+-k^-=\lambda+1=0$, under which $O$ is a center of \eqref{exkm} by
the symmetry with respect to $x$-axis.
For the Hopf bifurcations, consider $k^+\ne k^-$, implying $V_4\ne 0$.
In the case of $k^+>k^-$ (resp. $k^+<k^-$), for $\lambda_0=-(1+2k^+)/(1+2k^-)$ we get $V_2(\lambda_0)=0$, $V'_2(\lambda_0)\ne0$ and $V_4(\lambda_0)>0$ (resp. $<0$).
Thus, from \cite[Theorem D]{DD} we obtain that \eqref{exkm} exists one unstable (resp. stable) limit cycle when
$\lambda$ changes great (resp. less) from $\lambda_0$.

\section{Acknowledgements}

The authors thank the support of National Key R\&D Program of China (No. 2022YFA1005900), National Natural Science Foundation
of China (No. 12271378, No. 12171336),
Sichuan Science and Technology Program (No. 2024NSFJQ0008).

{\footnotesize
}

\end{document}